\theoremstyle{definition}
\newtheorem{theorem}{Theorem}[section]
\newtheorem{thmx}{Theorem}
\newtheorem*{theo*}{Theorem}
\newtheorem{definition}[theorem]{Definition}
\newtheorem{remark}[theorem]{Remark}
\newtheorem{lemma}[theorem]{Lemma}
\newtheorem{proposition}[theorem]{Proposition}
\newtheorem{corollary}[theorem]{Corollary}
\newtheorem{conjecture}[theorem]{Conjecture}
\newcommand{\Z}{\mathbb{Z}}
\newcommand{\N}{\mathbb{N}}
\newcommand{\Q}{\mathbb{Q}}
\newcommand{\C}{\mathbb{C}}
\newcommand{\R}{\mathbb{R}}
\newcommand{\A}{\mathcal{A}} % a arrangement of subtori
\newcommand{\B}{\mathbf{B}} % the coordinate hyperplane arrangement
\newcommand{\D}{\mathcal{D}} % the diagram D
\newcommand{\E}{\mathcal{E}} % the diagram E
\newcommand{\PP}{\mathcal{P}} % a generic poset
\newcommand{\LLL}{\mathcal{L}} % the poset of layers
\newcommand{\hLL}{\hat{\mathcal{L}}_{>0}} % L without min and with max added
\newcommand{\tH}{\tilde{H}} % reduced (co)homolgy
\newcommand{\UA}{\cup \A} % the union of A
\newcommand{\Fil}{\mathrm{L}} % the Leray filtration
\newcommand{\MM}{\mathrm{M}} % the Morgan algebra
\newcommand{\DD}{\mathrm{D}} % the Morgan algebra
\newcommand{\SG}{\mathfrak{S}} % the symmetric group
\DeclareMathOperator{\Vol}{Vol} % volume
\DeclareMathOperator{\Ann}{Ann} % annihilator
\DeclareMathOperator{\cd}{cd} % codimension function
\DeclareMathOperator{\Top}{Top}
\DeclareMathOperator{\sgn}{sgn}
\DeclareMathOperator{\colim}{colim}
\DeclareMathOperator{\hcolim}{hcolim}
\DeclareMathOperator{\gr}{gr}
\DeclareMathOperator{\rk}{rk}
\DeclareMathOperator{\im}{Im} % image of a map
\DeclareMathOperator{\TT}{T} % tangent space
\DeclareMathOperator{\dd}{d} % differential operator
\newcommand{\mylabel}[2]{#2\def\@currentlabel{#2}\label{#1}}
\newcommand*{\bigcdot}{%
  {\mathbin{\mathpalette\bigcdot@{}}}%
}
\newcommand*{\bigcdot@scalefactor}{.75}
\newcommand*{\bigcdot@widthfactor}{1.4}
\newcommand*{\bigcdot@}[2]{%
  \sbox0{$#1\vcenter{}$}% math axis
  \sbox2{$#1\cdot\m@th$}%
  \hbox to \bigcdot@widthfactor\wd2{%
    \hfil
    \raise\ht0\hbox{%
      \scalebox{\bigcdot@scalefactor}{%
        \lower\ht0\hbox{$#1\bullet\m@th$}%
      }%
    }%
    \hfil
  }%
}
\begin{document}

%\date{December 2019}
\title[]{On the cohomology of arrangements of subtori}

\author[L.\ Moci]{Luca Moci}
\thanks{The authors are supported by PRIN 2017YRA3LK}
\address{Luca Moci \newline \textup{Università di Bologna, Dipartimento di Matematica}\\ Piazza di Porta San Donato 5 - 40126 Bologna\\ Italy.}
\email{luca.moci2@unibo.it}

\author[R. Pagaria]{Roberto Pagaria}
\address{Roberto Pagaria \newline \textup{Università di Bologna, Dipartimento di Matematica}\\ Piazza di Porta San Donato 5 - 40126 Bologna\\ Italy.}
\email{roberto.pagaria@unibo.it}

\begin{abstract}
Given an arrangement of subtori of arbitrary codimension in a torus, we compute the cohomology groups of the complement.
Then, using the Leray spectral sequence, we describe the multiplicative structure on the graded cohomology.
We also provide a differential model for the cohomology ring by considering a toric wonderful model and its Morgan algebra. Finally we focus on the divisorial case, proving a new presentation for the cohomology of toric arrangements.
\end{abstract}

\maketitle

\section*{Introduction}
The cohomology ring of the complement of an arrangement of affine hyperplanes in a complex vector space admits a renowed combinatorial presentation in term of the poset of intersections of the arrangement, due to Orlik and Solomon \cite{OS80}. For a \emph{toric arrangement}, i.e.\ a collection of 1-codimensional subtori in a complex algebraic torus, a similar presentation was recently provided by \cite{CDDMP19}.

A different way of genealizing arrangements of hyperplanes is considering a family of affine subspaces, not necessarily of codimension 1. The complement of such a \emph{subspace arrangement} was studied by several authors (see \cite{GMP88,DCP95,Yu02,Yu99,DGMP00,dLS01}; see also  \cite{Bj} and the bibliography therein). In particular, Goresky and MacPherson provided the following description of the cohomology groups:

\begin{thmx}[{\cite[III.1.5, Theorem A]{GMP88}}] \label{Thm:GMP}
Let $\A$ be a subspace arrangement in $\R^d$, and let $M_\A = \R^d \setminus \UA$ be its complement.
The cohomology of the complement is given by
\[\tH^k(M_\A;\Z) \cong \bigoplus_{W \in \LLL_{>\hat{0}}} \tH_{\cd_{\R} W -k-2} \left( \Delta(\hat{0},W);\Z \right),\]
where $\LLL_{>\hat{0}}$ is the poset of flats $\LLL$ without the minimum $\hat{0}=\R^d$, and  $\Delta(\hat{0},W)$ is the order complex of the corresponding interval.
\end{thmx}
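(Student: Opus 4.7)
The plan is to reduce the cohomology of the open complement to the homology of the (compactified) union via Alexander duality, and then compute the latter using a stratification of $\UA$ indexed by the intersection poset $\LLL$. Compactifying $\R^d$ to $S^d = \R^d \cup \{\infty\}$, Alexander duality gives
\[\tH^k(M_\A;\Z) \cong \tH_{d-k-1}\bigl(\UA \cup \{\infty\};\Z\bigr),\]
where the right-hand side is the reduced singular homology of the closed subspace $\UA \cup \{\infty\} \subset S^d$; since each affine flat is properly embedded in $\R^d$, this subspace is homeomorphic to the one-point compactification $(\UA)^+$.

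Next I would stratify $(\UA)^+$ along the poset $\LLL$: for each $W \in \LLL_{>\hat 0}$ the open stratum consists of the points lying on $W$ but on no strictly larger flat, together with $\infty$ when $W$ is maximal. The normal geometry around such a stratum is controlled by the restriction of $\A$ to a transverse slice through $W$, and the homotopy type of the corresponding link is the order complex $\Delta(\hat 0, W)$. Building $(\UA)^+$ up in order of decreasing rank in $\LLL$ and using the conical structure of these normal slices, one obtains the Ziegler--\v{Z}ivaljevi\'{c} wedge splitting
\[(\UA)^+ \;\simeq\; \bigvee_{W \in \LLL_{>\hat 0}} \Sigma^{\dim_\R W + 1}\bigl(\Delta(\hat 0, W)_+\bigr),\]
and taking reduced homology term by term yields
\[\tH_n\bigl((\UA)^+;\Z\bigr) \;\cong\; \bigoplus_{W \in \LLL_{>\hat 0}} \tH_{n - \dim_\R W - 1}\bigl(\Delta(\hat 0, W);\Z\bigr).\]
Combining this with Alexander duality, substituting $n = d - k - 1$, and using the identity $\cd_\R W = d - \dim_\R W$ reproduces the stated formula.

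The main obstacle is rigorously establishing the wedge decomposition in the second step. The cleanest route is to set up a Mayer--Vietoris spectral sequence associated to the cover of $\UA$ by its maximal subspaces; its $E_1$-page is expressible in terms of order complexes of lower intervals in $\LLL$, and it collapses for degree reasons (each flat contributes cohomology in a single degree after one-point compactification), already yielding the direct-sum splitting on homology without having to construct the wedge explicitly. Alternatively one can follow the original stratified Morse theory argument of Goresky--MacPherson, but the spectral-sequence / wedge approach is combinatorially more transparent and fits better with the techniques used elsewhere in the paper.
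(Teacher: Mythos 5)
The paper does not prove this statement --- it is quoted from Goresky--MacPherson and used as a black box (e.g.\ inside the proof of \Cref{lemma:higher_direct_image}) --- so the right comparison is with the paper's proof of the toric analogue, \Cref{thm:main_additive}. There your strategy is exactly the authors': one-point compactify, apply Alexander duality to get $\tH^k(M_\A)\cong\tH_{d-k-1}(\widehat{\UA})$, and split $\widehat{\UA}$ as a wedge indexed by the flats. Your degree bookkeeping is consistent with the target formula, but note one slip in the wedge itself: the summand attached to $W$ is the join $\Delta(\hat{0},W)*\widehat{W}\simeq \Sigma^{\dim_{\R}W+1}\Delta(\hat{0},W)$, the suspension of the order complex itself, not of $\Delta(\hat{0},W)_+$. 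Adding a disjoint basepoint inserts a spurious sphere $S^{\dim_{\R}W+1}$ for every non-atom and makes atoms (where $\Delta(\hat{0},W)=\emptyset$) contribute a point instead of $\widehat{W}\simeq S^{\dim_{\R}W}$. The homology identity you then write, with reduced homology and $\tH_{-1}(\emptyset)=\Z$, is the correct one, so this is notational, but the wedge as displayed is wrong.

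The genuine gap is your justification of the splitting. The Mayer--Vietoris spectral sequence of the closed cover of $\widehat{\UA}$ by the $\widehat{S_i}$ does \emph{not} collapse ``for degree reasons''. What degree reasons give you is only that $d^1$ respects the decomposition by flats: each $\widehat{S_I}$ is a sphere with reduced homology in the single degree $\dim S_I$, and $d^1$ connects $I\supset J$ only when $S_I=S_J$. Identifying the resulting $E^2$-term with $\bigoplus_W \tH_{*}(\Delta(\hat{0},W))$ is already B\"orner's folding lemma (a nontrivial combinatorial step, not bookkeeping), and the higher differentials $d^r\colon E^r_{p,q}\to E^r_{p-r,q+r-1}$ for $r\geq 2$ connect flats of \emph{different} dimensions, so nothing forces them to vanish; over $\Z$ you also face extension problems in the abutment. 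Your alternative sketch via links of strata is essentially circular, since identifying the link with $\Delta(\hat{0},W)$ is the content of the theorem. The clean way to close the gap is the machinery the paper itself uses: realize $\widehat{\UA}$ as the colimit of the diagram $W\mapsto\widehat{W}$ over the augmented intersection poset, pass to the homotopy colimit by \Cref{ProjLemma}, build for each flat a self-map of $\widehat{W}$ collapsing the complement of a small ball centered at a point of $W$ lying on no larger flat (the linear analogue of \Cref{Lemma:homotopy_equivalence}), use \Cref{HomotLemma} to replace the inclusions by constant maps, and finish with \Cref{WedgeLemma}. With that substitution your argument is the standard Ziegler--\v{Z}ivaljevi\'c proof of the Goresky--MacPherson formula and is correct.
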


Later, De Concini and Procesi constructed in \cite{DCP95} a wonderful model for subspace arrangements, i.e.\ a smooth projective variety $Y$ that contains $M_\A$ as an open subset whose complement is a simple normal crossing divisor.
They also applied a result of Morgan \cite{Morgan78} to present the rational cohomology ring of $M_\A$ as the cohomology of a differential graded algebra explicitly constructed from the combinatorial data only.

In 1996, Yuzvinsky simplified the differential graded algebra (see \cite{Yu02}) by using the order complex of the poset of intersection. He also showed a connection between the results of \cite{GMP88} and of \cite{DCP95}.
A further simplification was obtained in \cite{Yu99} by replacing the order complex with the atomic complex. % of the poset of intersections.

Yuzvinsky also conjectured an integral version of this presentation. This conjecture was proven in \cite{DGMP00} and  \cite{dLS01}:
Deligne, Goresky, and MacPherson proved their result using diagram of spaces, de Longueville and Schultz by showing that the isomorphism of \Cref{Thm:GMP} is canonical.

In this paper we consider arrangements of subtori of arbitrary codimension in a complex algebraic torus. Given the complement of such an arrangement, we determine its cohomology groups (see  \Cref{thm:main_additive}):

\begin{thmx}
Let $\A$ be an arrangements of subtori of a torus $T$ and $\LLL$ be it poset of layers. Then the cohomology groups of the complement $M_\A$ are
\[ H^k(M_\A;\Z) \cong \bigoplus_{W \in \LLL} \:\bigoplus_{p+q=k} H^p(W;\Z) \otimes_\Z \tH_{2 \cd W - 2-q}(\Delta(T,W)).\]
\end{thmx}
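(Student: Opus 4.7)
The plan is to reduce the global cohomology of $M_\A$ to the classical Goresky-MacPherson theorem (\Cref{Thm:GMP}) via a local-to-global argument, exploiting the fact that, locally near each layer, a toric arrangement is biholomorphic to an affine subspace arrangement.

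First I would establish the local model. For each layer $W \in \LLL$, let $W^\circ := W \setminus \bigcup_{W' \supsetneq W} W'$ denote the open stratum, and let $N_W$ be a tubular neighborhood of $W^\circ$ in $T$. Using the group structure of $T$ (in particular, the local splitting $T \simeq W \times T/W$ around each point of $W$), one identifies $N_W$ with $W^\circ \times V_W$, where $V_W$ is a small open disk in a transverse slice of complex dimension $\cd W$. Under this identification, the arrangement $\A$ restricts on $N_W$ to the product $W^\circ \times \A_W$, where $\A_W$ is the \emph{central} affine subspace arrangement in $V_W \cong \C^{\cd W}$ whose intersection poset is the closed interval $[T,W]$ of $\LLL$. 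Applying \Cref{Thm:GMP} to $\A_W$, together with K\"unneth for $N_W \cap M_\A \cong W^\circ \times (V_W \setminus \cup \A_W)$, expresses the cohomology of this local piece in terms of $H^*(W^\circ)$ and the order complexes $\Delta(T,W')$ for $W' \in [T,W]$.

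Next I would assemble the local data globally through the spectral sequence associated to the stratification of $T$ by layers --- equivalently, the Leray spectral sequence of the open inclusion $j \colon M_\A \hookrightarrow T$, whose higher direct image sheaves $R^q j_* \Z$ are constructible with respect to the layer stratification, with stalks computed by the previous step. A M\"obius inversion on the poset $\LLL$ then reorganizes the contributions: the many summands $\tH_*(\Delta(T,W'))$ (indexed by $W' \supseteq W$) that appear at each stratum collapse to the single summand $\tH_{2\cd W - 2 - q}(\Delta(T,W))$ attached to $W$, while the open-stratum cohomologies $H^*(W^\circ)$ recombine into $H^*(W)$; reindexing yields the stated decomposition.

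The hard part will be the final step: verifying both that the spectral sequence degenerates at $E_2$ and that the M\"obius inversion produces the clean direct sum (with $H^*(W)$ rather than $H^*(W^\circ)$, and only one order complex $\Delta(T,W)$ per layer). Degeneration is plausibly obtained from a weight/mixed-Hodge comparison, since everything in sight is complex algebraic and the right-hand side is pure of the expected weights. A more topological alternative is to construct $M_\A$ as a homotopy colimit indexed by $\LLL$ (in the spirit of the diagram-of-spaces models used by Deligne-Goresky-MacPherson and de Longueville-Schultz to prove \Cref{Thm:GMP} integrally); this would read off the decomposition combinatorially, bypass the spectral sequence altogether, and at the same time ensure integrality and naturality in the poset.
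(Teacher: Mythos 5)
Your route is genuinely different from the paper's, and as written it has gaps that matter precisely because the statement is over $\Z$. The local model is fine (it is essentially the stalk computation in \Cref{lemma:higher_direct_image}: $R^qj_*\Z$ has stalks given by \Cref{Thm:GMP} applied to the tangent-space arrangement $\A[x]$, and no M\"obius inversion is needed --- the relevant sheaf is $\bigoplus_W (i_W)_*\Z_W\otimes \tH_{2\cd W-q-2}(\Delta(T,W))$, supported on the closed layers, so $H^p(W)$ appears directly rather than by recombining open strata). The problems are downstream. First, degeneration at $E_2$: a weight argument gives this only with $\Q$-coefficients, whereas the theorem is integral; and in the paper the degeneration (\Cref{thm:spectral}) is \emph{deduced from} the additive theorem by comparing ranks and torsion of $E_2$ and $E_\infty$, so you cannot use it as an input without circularity. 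Second, even granting integral degeneration, the spectral sequence only computes $\gr^{\Fil}H^k(M_\A;\Z)$; over $\Z$ the extension problems are not automatically split, so you would still not obtain the asserted direct-sum decomposition of $H^k$ itself.

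Your one-sentence fallback (``construct $M_\A$ as a homotopy colimit indexed by $\LLL$'') points toward what the paper actually does, but the content is hidden in exactly the step you skip. The paper does not model $M_\A$ itself as a homotopy colimit; it chooses a \emph{positive system} of bases so that $T\cong(\C^*)^d\subset\C^d\subset\mathbb{S}^{2d}$ embeds $M_\A$ with $\mathbb{S}^{2d}\setminus M_\A=\widehat{\UA}\vee\widehat{\cup\mathcal{B}_{2d}}$ (\Cref{Lemma:embedding}): the positivity guarantees each subtorus lies in a hypertorus $z_1^{n_1}\cdots z_d^{n_d}=c$ with $n_i\ge 0$, hence its closure misses the coordinate hyperplanes and only picks up the point at infinity. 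Then Alexander duality in $\mathbb{S}^{2d}$, a homotopy-colimit decomposition of $\widehat{\UA}$ into a wedge of joins $\widehat{W}*\Delta(T,W)$ via the Projection, Homotopy and Wedge Lemmas, K\"unneth for the smash product, and Poincar\'e duality $\tH_{2\dim W-p}(\widehat{W})\cong H^p(W)$ give the integral statement. The paper explicitly notes that without the careful choice of compactification several steps of this argument fail (this is the issue with Deshpande's announced proof), so the compactification is not a detail you can wave at --- it is the key lemma your proposal is missing.
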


Our proof is based on a suitable embedding of a $d$-dimensional complex algebraic torus $T$ in the Alexandroff compactification of $\C^d$, that is, the sphere $\mathbb S^{2d}$. The embedding is chosen so that the complement of $T$ in $\mathbb S^{2d}$ does not intersect the toric subspaces; hence the arrangement decomposes in a wedge of two simpler ones, given respectively by the compactifications of the coordinate hyperplanes and of the subtori in the original arrangement (\Cref{Lemma:embedding}). Then we apply some results on homotopy colimits \cite{WZZ99}, following a strategy outlined by Deshpande in \cite{Deshpande18}.
In that paper the same result is announced, but the proof therein does not seem to be correct, as several steps fail if the compactification is not chosen carefully.

Moreover we describe the multiplicative structure on the graded of the cohomology, by using the Leray spectral sequence for the inclusion map $j \colon M_\A \to T$.
%lo citiamo gia due righe piu sotto (see \Cref{thm:spectral}).
We show, by using results of the previous section, that the second page of the spectral sequence is a finitely generated $\Z$-module isomorphic  to the cohomology as a module.
It follows that the spectral sequence degenerates at the second page and this gives the isomorphism $E_2^{p,q} \cong \gr_{p+q}^{\Fil} H^{p+q}(M_\A;\Z)$ (\Cref{thm:spectral}).

Furthermore, we provide a model for the cohomology of  $M_\A $: we use the wonderful model for toric arrangements introduced by De Concini and Gaiffi (see \cite{DCG1,DCG2}) to construct a differential graded algebra whose cohomology is isomorphic to the rational cohomology ring of the complement (\Cref{dga-iso}).
Since our method are based on the aforementioned Morgan algebra, this d.g.a.\ codifies also the rational homotopy type of the complement.

Finally we focus on the case of  an arrangement of subtori of codimension $1$. Given such a \emph{toric arrangement}, and chosen its maximal \emph{building set}, we find a subalgebra of the Morgan model isomorphic to the cohomology ring.
This subalgebra, explicitly presented by generators and relations in \Cref{thm:main_div}, yields an analogue of the Orlik-Solomon algebra for toric arrangements.
This new presentation depends on the oriented arithmetic matroid only (see \cite{OAM20}) and, compared to the previous result of \cite{CDDMP19}, exhibits more clearly the dependence from the orientation. Furthermore, it seems more suitable to be generalized to arrangement of subtori of arbitrary codimensions. We also conjecture that a similar presentation holds for cohomology with integer coefficients (\Cref{conj-int}).
\newpage
\tableofcontents

\section{Positive systems and embedding of subtori}
A $d$-dimensional \emph{complex torus} $T$ is an algebraic group isomorphic to $(\C^*)^d$. A \emph{character} is a morphism of algebraic groups $T\to\C^*$. The group $\Lambda$ of all characters is a lattice of rank $d$, i.e., it is isomorphic to $\Z^d$. A \emph{subtorus} of $T$ is a translate of a subgroup isomorphic to $(\C^*)^k, \: 0\leq k< d$. 

\begin{definition}
An \emph{arrangement of subtori} is a finite collection $\A=\{S_1, \dots, S_n \}$ of subtori of $T$ such that $S_i \not \subseteq S_j$ for all $i\neq j$.
\end{definition}

We denote by $M_\A$ the complement of this arrangement, that is, 
\[\mathbb C^n\setminus (S_1\cup\dots\cup S_n).\]

The set of characters that are constant on a subtorus $S_i$ is a subgroup of $\Lambda$, that we denote by $\Lambda_i$.
Let $\mathbf B$ be a basis (over $\Z$) of $\Lambda$ and, for every $i=1,\dots,n$, let $\mathbf B_i$ be a basis (over $\Z$) of $\Lambda_i$.

\begin{definition}
We say that $(\mathbf B, \mathbf B_1,\dots, \mathbf B_n)$ is a \emph{positive system} if the coordinates of all the elements of every $\mathbf B_i$ in the basis $\mathbf B$ are positive.
\end{definition}
The above definition is inspired by similar (and indeed stronger) notions introduced by De Concini and Gaiffi in \cite{DCG1,DCG2}.

\begin{lemma}
Every arrangement of subtori admits a positive system.
\end{lemma}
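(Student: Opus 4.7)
The plan is to reformulate the desired conclusion dually and then construct an explicit basis of $\Lambda^\vee$. Pick first any bases $\B_i^0$ of each $\Lambda_i$ and let $V \subset \Lambda \setminus \{0\}$ denote their union. Observe that replacing some $v \in \B_i^0$ by $-v$ still yields a $\Z$-basis of $\Lambda_i$, so we have the freedom to flip signs in $V$ at will. Now if $\B = (e_1, \dots, e_d)$ is a $\Z$-basis of $\Lambda$ and $(f_1, \dots, f_d)$ is the dual basis of $\Lambda^\vee := \mathrm{Hom}(\Lambda, \Z)$, then the $j$-th coordinate of $v \in \Lambda$ in $\B$ equals $f_j(v)$. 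So the problem reduces to producing a $\Z$-basis $(f_1, \dots, f_d)$ of $\Lambda^\vee$ with $f_j(v) > 0$ for every $j$ and every $v$ in a suitably signed version of $V$.

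To build such an $(f_j)$, I would first pick a primitive linear functional $\ell \in \Lambda^\vee$ with $\ell(v) \neq 0$ for all $v \in V$; such an $\ell$ exists since the finitely many hyperplanes $\{\ell \in \Lambda^\vee : \ell(v) = 0\}$ cannot exhaust $\Lambda^\vee$. Flipping signs of the $v$'s if necessary, we may assume $\ell(v) \geq 1$ for every $v \in V$. Since $\ell$ is primitive, it extends to a $\Z$-basis $(g_1, \dots, g_{d-1}, \ell)$ of $\Lambda^\vee$. Now fix an integer $N > \max_{j<d,\, v \in V} |g_j(v)|$ and set $f_j := g_j + N\ell$ for $j = 1, \dots, d-1$, and $f_d := \ell$. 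The change-of-basis matrix from $(g_1, \dots, g_{d-1}, \ell)$ to $(f_1, \dots, f_d)$ is unit upper triangular, so the latter is again a $\Z$-basis of $\Lambda^\vee$; and the estimates $f_j(v) = g_j(v) + N \ell(v) \geq N - |g_j(v)| > 0$ for $j < d$, together with $f_d(v) = \ell(v) \geq 1$, give the required positivity. Taking $\B$ to be the $\Z$-basis of $\Lambda$ dual to $(f_1, \dots, f_d)$ then yields a positive system.

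The main obstacle is balancing the $\Z$-basis condition against the positivity inequalities: simply rescaling $\ell$ by a large integer to make $g_j + N \ell$ positive would destroy the $\Z$-basis property, while working with a non-primitive $\ell$ would prevent it from being completed to a basis at all. Starting from a \emph{primitive} $\ell$ and using a unit upper triangular modification is precisely what allows both conditions to be satisfied at once.
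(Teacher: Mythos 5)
Your proof is correct, but it follows a genuinely different route from the paper's. The paper works directly with the matrix $A$ whose columns are the coordinates of all the $b_{i,j}\in\mathbf{B}_i$ in an initial basis $\mathbf{B}$: it first flips signs so that the last nonzero entry (pivot) of each column is positive, then inductively adds suitable multiples of the $k$-th row to the rows above it so that all columns with pivot in row $k$ become non-negative; these elementary row operations are a unimodular change of basis of $\Lambda$. You instead dualize the problem, choose a primitive $\ell\in\Lambda^\vee$ generic with respect to the finite set $V$, flip signs so that $\ell\geq 1$ on $V$, and then shear a completion of $\ell$ to a basis of $\Lambda^\vee$ by large multiples of $\ell$. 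Your argument buys two things: it avoids the pivot bookkeeping and the induction, and it produces \emph{strictly} positive coordinates in one step, which is literally what the definition of positive system asks for (the paper's proof, as written, only yields a non-negative matrix, though non-negativity with not all coordinates zero is what is actually used later in \Cref{Lemma:embedding}). The paper's approach is more algorithmic and stays entirely on the primal side. Two small points you could make explicit: after choosing $\ell$ off the finitely many hyperplanes $\{\ell : \ell(v)=0\}$ you should divide by its content to make it primitive (this preserves non-vanishing on $V$), and the sign flips are applied consistently to each $\mathbf{B}_i$ (determined by the sign of $\ell(v)$), which still gives a $\Z$-basis of each $\Lambda_i$.
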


\begin{proof}
For each $S_i \in \A$ choose a basis $\mathbf{B}_i$ of $\Lambda_i$ and a basis $\mathbf{B}$ of $\Lambda$.
Consider the matrix $A$ that represent the elements $b_{i,j}\in \mathbf{B}_i$, for $i=1, \dots, n$ and $j=1,\dots , |\mathbf{B}_i|$ in the basis $\mathbf{B}$.
By changing $b_{i,j}$ with $-b_{i,j}$ we suppose that the last non-zero entry of the $j-$th column of $A$ is a positive integer: we call this entry the pivot of the column.
%Now re-order the columns $A^i$ of $A$ such that the pivot of the column $A_i$ is not below the pivot of the column $A^j$ for all $j>i$.
We perform a sequence of elementary row operation in order to make $A$ a non-negative matrix.
The columns with pivot in the first row are already non-negative.
We proceed by induction, suppose that all columns with pivot in the first $k-1$ rows are non-negative.
By adding a suitable multiple of the $k$th row to the previous rows, we can make all the columns with pivot in the $k$th row non-negative. This operation does not change the columns with pivot in the first $k-1$ rows.
By repeating the procedure for every $k=2, \dots, d$, we obtain a non-negative matrix.
The elementary row operations correspond to a change of basis from $\mathbf{B}$ to a new basis $\mathbf{B}'$ that form a positive system $(\mathbf{B}',\mathbf B_1,\dots, \mathbf B_n)$.
\end{proof}

We denote by $\mathbb{S}^d$ the $d$-dimensional real sphere, and by $\mathcal B_d$ the Boolean arrangement, i.e. the set of the coordinate hyperplanes in $\mathbb C^d$.

Given a topological space $X$, its \emph{Alexandroff compactification} $\widehat{X}$ is defined as the topological space on the set $X\cup\{\infty\}$ whose basis of open sets is given by the open sets of $X$ and the sets $(X\setminus C)\cup \{\infty\}$, where $C$ ranges over all the closed and compact sets of $X$.
For instance, the Alexandroff compactification of $\C^d$ is isomorphic to the sphere $\mathbb S^{2d}$.

\begin{proposition}\label{Lemma:embedding}
Let $\A$ be a arrangements of subtori in a $d$-dimensional torus $T$.
Then there exists an embedding $M_\A \hookrightarrow \mathbb{S}^{2d}$ such that 
\[\mathbb{S}^{2d} \setminus M_\A = \widehat{\UA} \:\vee\: \widehat{\cup \mathcal{B}_{2d}}.\]
\end{proposition}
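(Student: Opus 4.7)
The plan is to use the positive system furnished by the preceding lemma to fix a convenient embedding of $T$ in $\mathbb{S}^{2d}$, and then to exhibit the asserted wedge decomposition of the complement of $M_\A$. Given a positive system $(\B,\B_1,\dots,\B_n)$, the basis $\B$ identifies $T$ with $(\C^*)^d$, which embeds canonically into $\C^d$ and hence into $\mathbb{S}^{2d}=\widehat{\C^d}$. Under this identification one automatically has $\mathbb{S}^{2d}\setminus T = \widehat{\cup\mathcal{B}_d}$, so everything comes down to understanding how $\UA$ closes up inside $\mathbb{S}^{2d}$.

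The central technical step is to check that the closure $\overline{S_i}$ of each subtorus inside $\C^d$ does not pick up any point on a coordinate hyperplane. Each $S_i$ is cut out in $T$ by equations $\chi(t)=c_\chi$ with $\chi\in\B_i$ and $c_\chi\in\C^*$; the positivity of the coordinates of $\B_i$ in $\B$ means that, expressed in the coordinates on $(\C^*)^d$ provided by $\B$, every such $\chi$ is a monomial with non-negative exponents and therefore extends to a polynomial on all of $\C^d$. If a point $s\in\overline{S_i}$ had $s_j=0$ for some $j$, then any $\chi\in\B_i$ with strictly positive $j$-th exponent would give $0=\chi(s)=c_\chi\neq 0$, a contradiction; so as soon as for every $(i,j)$ some element of $\B_i$ has a strictly positive $j$-th coordinate, one obtains $\overline{S_i}\cap(\cup\mathcal{B}_d)=\emptyset$.

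Once this holds for every $i$, the union $\UA$ is closed in $\C^d$ and disjoint from $\cup\mathcal{B}_d$. Standard point-set considerations then identify the closures of these two subsets in $\mathbb{S}^{2d}$ with their Alexandroff compactifications $\widehat{\UA}$ and $\widehat{\cup\mathcal{B}_d}$: because $\UA$ is closed in $\C^d$, a subset is compact in $\UA$ iff it is the intersection with $\UA$ of a compact subset of $\C^d$, so the basic open neighbourhoods of $\infty$ match up correctly. These two closed subsets meet only at $\infty$ and together exhaust $\mathbb{S}^{2d}\setminus M_\A$, which is exactly the desired wedge decomposition.

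The main obstacle, implicitly flagged in the introduction's critique of \cite{Deshpande18}, is making sure the positive system really forces every row of each exponent matrix to contain a strictly positive entry rather than merely being non-negative: a zero row would correspond to a subtorus left invariant by translation in the $j$-th coordinate, and its closure in $\C^d$ would then meet the hyperplane $\{s_j=0\}$, breaking the argument. One repairs this by strengthening the positive system, for instance by replacing each $\B_i$ by another basis of $\Lambda_i$ obtained from it by adding sufficiently large multiples of a chosen basis element to the others; this preserves non-negativity while producing the required strict positivity whenever $\Lambda_i$ genuinely involves the $j$-th coordinate, and the remaining directions along which the whole arrangement is a product can be split off and handled separately.
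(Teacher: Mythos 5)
Your strategy coincides with the paper's: fix a positive system, embed $T\cong(\C^*)^d\subset\C^d\subset\mathbb{S}^{2d}$, show that the closure of $\UA$ in $\C^d$ misses the coordinate hyperplanes, and conclude that $\mathbb{S}^{2d}\setminus M_\A$ is the one-point compactification of the topological disjoint union $\UA\sqcup(\cup\mathcal{B}_d)$, hence the wedge. You have also correctly isolated the precise condition needed --- for every $i$ and every $j$ some element of $\B_i$ must have \emph{strictly} positive $j$-th coordinate --- which is exactly the point the paper itself treats tersely (its system ``$z_1^{n_1}\cdots z_d^{n_d}=c$, $z_j=0$'' has no solutions only when $n_j>0$).

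The gap is in your repair of this condition. Once the $\B_i$ are non-negative, whether some element of $\B_i$ has strictly positive $j$-th coordinate is a property of the pair $(\Lambda_i,\B)$ alone: it holds if and only if the $j$-th dual basis vector $b_j^*$ does not annihilate $\Lambda_i$. So in the good case (``$\Lambda_i$ genuinely involves the $j$-th coordinate'') no modification of $\B_i$ is needed, while in the bad case ($b_j^*\in\Ann\Lambda_i$) no change of basis of $\Lambda_i$ can possibly help. Your fallback --- splitting off the directions in which the arrangement is a product --- also fails, because different subtori can be degenerate in different directions: for $\A=\{\{z_1=1\},\{z_2=1\}\}$ in $(\C^*)^2$ with the standard basis, $\overline{S_1}$ meets $\{z_2=0\}$ and $\overline{S_2}$ meets $\{z_1=0\}$, yet the arrangement is not a product in any coordinate direction. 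The correct fix is to change the \emph{ambient} basis $\B$ so that every $\Lambda_i$ contains a vector all of whose coordinates are strictly positive (in the example, $b_1=(2,-1)$, $b_2=(-1,1)$ gives $(1,0)=b_1+b_2$ and $(0,1)=b_1+2b_2$); one then upgrades $\B_i$ by adding large multiples of such a vector to the other basis elements. This existence statement about $\B$ is what the notion of positive system is meant to supply, and it is the step your argument still owes.
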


\begin{proof}
We choose a positive system $(\mathbf B, \mathbf B_1,\dots, \mathbf B_n)$.
The basis $\B$ defines an isomorphism $T \cong (\C^*)^d$, and consider the composition 
\[M_{\A} \subset T \cong (\C^*)^d \subset \C^d \subset \widehat{\C^d}\cong \mathbb{S}^{2d}.\]
Notice that $\C^d \setminus M_{\A}$ is the disjoint union of $\UA$ and $\cup \mathcal{B}_{2d}$ because the system
\[ \begin{cases}
z_1^{n_1} z_2^{n_2} \cdots z_d^{n_d}=c\\
z_j = 0
\end{cases}\]
for $n_i \in \N$, $c \in \C^*$, and $j\leq d$, has no solutions.
The condition of positive system ensure that each subtorus $S_i \in \A$ is contained in a hypertorus of the form \[\{\underline{z} \in (\C^*)^d \mid z_1^{n_1} z_2^{n_2} \dots z_d^{n_d}=c\}\]
for some $c \in \C^*$ and some $n_i \in \N$, not all equal to zero.
Now, $\mathbb{S}^{2d} \setminus M_\A$ is the Alexandroff compactification of $\C^d \setminus M_{\A}$, hence 
\[ \mathbb{S}^{2d} \setminus M_\A \cong \widehat{\C^d \setminus M_{\A}} \cong \widehat{\UA \sqcup \cup \mathcal{B}_{2d}} \cong \widehat{\UA} \vee \widehat{\cup \mathcal{B}_{2d}}. \qedhere\]
\end{proof}

\section{Cohomology groups of arrangements of subtori}

Let $\PP$ be a poset. We recall that the \emph{order complex} $\Delta(\PP)$  is the simplicial complex %on the set of elements of $\LLL$ 
whose simplexes are the the totally ordered subsets of $\PP$.
For any $W,L \in \PP$ with $W>L$ we denote $\Delta(L,W)$ the order complex of the sub-poset 
\[\set{X \in \PP \mid W>X>L}.\]

\begin{definition}
Given two pointed CW-complexes $(X,x)$ and $(Y,y)$, we define:
\begin{itemize}
\item the \emph{wedge sum} $X\vee Y$ as $X\sqcup Y/x\sim y$;
\item the \emph{smash product} $X\wedge Y$ as the topological quotient $X\times Y/ X\vee Y$;
\item the \emph{join} $X*Y$ as $X\wedge Y \wedge \mathbb{S}^1$.
\end{itemize}
\end{definition}

Let $\hLL$ be the poset obtained from the poset of layers $\LLL$ by removing the minimum $\hat{0}=T$ and adding a maximum $\hat{1}$. We think of $\hLL$ as a category, having one morphism $p \to q$ for every $p, q\in \hLL$ such that $p\leq q$. %(o \geq?)

Given a poset $\PP$, a $\PP$-\emph{diagram} is a functor from the category $\PP$ to the category $\Top_*$ of pointed topological spaces.

\begin{definition}
We define two $\hLL$-diagrams $\D$ and $\E$ as follows.

For every object $W \in \hLL$,
\[ \D(W)=\E(W)= \widehat{W} \textnormal{ and } \D(\hat{1})=\E(\hat{1})= \{\infty\};\]
for every map $W>L$, $\D$ is defined by the natural inclusions:
\[ \D(W>L)= \widehat{W} \hookrightarrow \widehat{L} \textnormal{ and } \D(\hat{1}>W) = \{\infty \} \hookrightarrow \widehat{W} \]
while $\E$ by the constant maps at the point $\infty$: 
\[ \E(W>L)= \widehat{W} \to \widehat{L} \textnormal{ and } \E(\hat{1}>W) = \{\infty \} \to \widehat{W}. \]
\end{definition}
The \emph{colimit} of a $\mathcal{P}$-diagram $\mathcal{F}$ is the union of all topological spaces $\mathcal{F}(p)$ for all $p \in \mathcal{P}$ with the identification given by the maps between them (i.e.\ $x = \mathcal{F}(p \to q)(x)$ for all maps $p \to q$ in $\mathcal{P}$ and all $x \in \mathcal{F}(p)$).

The \textit{homotopy colimit} of $\mathcal{F}$ can be constructed by replacing all the maps $\mathcal{F}(p \to q)$ with homotopy equivalent cofibrations and then taking the colimit of the resulting diagram.

We recall the following results of Welker, Ziegler and \v{Z}ivaljevi\'{c}:

\begin{lemma}[{Projection Lemma \cite[Lemma 4.5]{WZZ99}}] \label{ProjLemma}
Let $\D$ be a $\PP$-diagram such that all maps are inclusions and closed cofibrations.
Then the natural map $\hcolim \D \to \colim \D$ from the homotopy colimit to the colimit of $\D$  is a homotopy equivalence.
\end{lemma}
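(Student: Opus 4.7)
The plan is to proceed by induction on $|\PP|$, building both $\colim \D$ and $\hcolim \D$ as iterated pushouts and invoking the gluing lemma for closed cofibrations at each step.

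First I would recall the bar model of the homotopy colimit: $\hcolim \D$ is the geometric realization of the simplicial space whose $n$-simplices are
\[ \coprod_{p_0 \to p_1 \to \cdots \to p_n} \D(p_0), \]
with faces and degeneracies induced by the structure maps of $\D$ and by dropping or repeating entries of a chain. The natural projection $\pi \colon \hcolim \D \to \colim \D$ collapses each cell $\D(p_0) \times |\Delta^n|$ onto its image in $\colim \D$ via $\D(p_0 \to p_n)$; the content of the lemma is that this collapse is a homotopy equivalence under the standing cofibration hypothesis.

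For the inductive step I would pick an element $m \in \PP$ that is extremal with respect to the direction of the structure maps (a sink, say), and set $\PP' = \PP \setminus \{m\}$, $\D' = \D|_{\PP'}$. Letting $\PP_{<m}$ denote the elements whose structure maps in the diagram land in $\D(m)$, with $\D_{<m}$ the corresponding restricted subdiagram, both constructions fit into pushout squares
\begin{align*}
\colim \D &\cong \D(m) \cup_{\colim \D_{<m}} \colim \D', \\
\hcolim \D &\cong \D(m) \cup_{\hcolim \D_{<m}} \hcolim \D'.
\end{align*}
Because closed cofibrations in $\Top_*$ are stable under pushouts and filtered colimits, the attaching map $\colim \D_{<m} \to \D(m)$ is again a closed cofibration, so the ordinary pushout computing $\hcolim \D$ coincides with the homotopy pushout. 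By the inductive hypothesis the projections $\hcolim \D' \to \colim \D'$ and $\hcolim \D_{<m} \to \colim \D_{<m}$ are homotopy equivalences, and the gluing lemma for closed cofibrations assembles these into the desired homotopy equivalence $\pi$.

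The main obstacle I anticipate is establishing that $\colim \D_{<m} \to \D(m)$ really is a closed cofibration at each stage: this reduces to a closure property of the class of closed cofibrations under the pushouts that construct $\colim \D_{<m}$ from the individual $\D(q)$, together with the observation that the inductively built pushouts remain well-behaved (e.g.\ T1). Once this bookkeeping is in hand at each step, the gluing lemma takes care of the rest, and the base case $|\PP|=1$, where $\pi$ is the identity, completes the argument.
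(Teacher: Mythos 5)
The paper offers no proof of this lemma — it is quoted from \cite[Lemma 4.5]{WZZ99} as a black box — so there is no internal argument to compare yours against. That said, your outline (induct on $\lvert\PP\rvert$, peel off a sink $m$, exhibit $\colim\D$ and $\hcolim\D$ as pushouts over the subdiagram $\D_{<m}$ of objects mapping into $\D(m)$, and conclude with the gluing lemma) is the standard route to the Projection Lemma, and with the right hypotheses it does close up.

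The genuine gap sits exactly at the step you defer to ``bookkeeping'': showing that the attaching map $\colim\D_{<m}\to\D(m)$ is a closed cofibration. Stability of closed cofibrations under cobase change and (transfinite) composition does not address this map at all: it is not obtained from the structure maps by pushout, but is the canonical map from the colimit of the subdiagram onto the \emph{union} $\bigcup_{q}\D(q)\subseteq\D(m)$. Two things can fail. First, the map need not be injective: if $q,q'$ both map into $\D(m)$, are incomparable, and no element of $\PP$ indexes $\D(q)\cap\D(q')$, then $\colim\D_{<m}$ contains $\D(q)$ and $\D(q')$ disjointly while their images in $\D(m)$ overlap, so the map is not even a monomorphism. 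Second, even when it is the inclusion of a finite union of closed subspaces each of which is a cofibration in $\D(m)$, the union need not be a cofibration; one needs Lillig's union theorem, whose hypotheses require the pairwise (and higher) intersections to be cofibrations too — and that is not implied by the stated assumption that each individual structure map is a cofibration. This is precisely why \cite{WZZ99} formulate the Projection Lemma for \emph{cofibrant} diagrams, i.e., under the stronger hypothesis that $\colim\D_{>p}\to\D(p)$ is a closed cofibration for every $p$; with that hypothesis your induction and gluing-lemma argument go through (and the hypothesis is satisfied in the paper's application, where the $\widehat{W}$ are closed subcomplexes of a common CW structure and the poset of layers is closed under connected components of intersections). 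A secondary, fixable imprecision: $\hcolim\D$ is not the strict pushout $\D(m)\cup_{\hcolim\D_{<m}}\hcolim\D'$ but the double mapping cylinder of $\D(m)\leftarrow\hcolim\D_{<m}\to\hcolim\D'$; you should either phrase it as a homotopy pushout or replace $\D(m)$ by $\hcolim\D_{\le m}\simeq\D(m)$ so that both squares being compared by the gluing lemma are honest pushouts along cofibrations.
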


\begin{lemma}[{Homotopy Lemma \cite[Lemma 4.6]{WZZ99}}] \label{HomotLemma}
Let $\D$ and $\E$ be $\PP$-diagrams and $h \colon \D \to \E$ be a morphisms of $P$-diagrams (i.e.\ a natural transformation between the two functors).
Suppose that for all $W\in \PP$ the map $h_W \colon \D(W) \to \E(W)$ is a homotopy equivalence, then the induced map $\hcolim \D \to \hcolim \E$ is a homotopy equivalence. 
\end{lemma}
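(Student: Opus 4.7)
The plan is to work with an explicit cellular model for the homotopy colimit---the Bousfield--Kan (or bar) construction---and to show that a pointwise homotopy equivalence of diagrams induces a homotopy equivalence on this model by a cell-by-cell induction using the gluing lemma for cofibrations. Concretely, I would realize $\hcolim \D$ as
\[ \bigsqcup_{p_0 < p_1 < \cdots < p_n} \Delta^n \times \D(p_0) \Big/ \sim, \]
where $\sim$ identifies the $0$-th face of the piece indexed by the chain $(p_0 < \cdots < p_n)$ with the piece indexed by $(p_1 < \cdots < p_n)$ via the structure map $\D(p_0 \to p_1)$, and identifies the remaining faces with the pieces indexed by shorter subchains via the identity on $\D(p_0)$. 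A natural transformation $h$ then induces a cellular map $\hcolim \D \to \hcolim \E$ acting as $\mathrm{id} \times h_{p_0}$ on each piece, the naturality of $h$ being exactly what is needed for compatibility with $\sim$.

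Next I would filter $\hcolim \D$ by the length of the indexing chain, obtaining a sequence of closed cofibrations $F_0 \subset F_1 \subset \cdots \subset \hcolim \D$ in which $F_n$ is obtained from $F_{n-1}$ by attaching, via a pushout, the pieces $\Delta^n \times \D(p_0)$ indexed by chains of length $n$ along their boundaries, and analogously for $\E$. Since each $h_{p_0}$ is a homotopy equivalence and all attaching maps are closed cofibrations, the gluing (cube) lemma for cofibrations shows inductively that the induced map $F_n(\D) \to F_n(\E)$ is a homotopy equivalence for every $n$.

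Finally, passing to the colimit of this filtration and using that each $F_{n-1} \hookrightarrow F_n$ is a closed cofibration, a standard telescope / mapping-cylinder argument upgrades the compatible system of homotopy equivalences on the strata to a homotopy equivalence $\hcolim \D \to \hcolim \E$ on the union. The main technical obstacle is ensuring the cofibrancy hypotheses needed at each inductive step and at the colimit; however, this is automatic in the Bousfield--Kan model, since every attachment is along a boundary of a standard simplex times a fixed space, yielding CW-type closed cofibrations.
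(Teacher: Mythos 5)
This lemma is quoted in the paper as a known result (\cite[Lemma 4.6]{WZZ99}) and is not proved there, so there is no in-paper argument to compare against; your proof is the standard one found in the literature for that reference. The sketch is correct: the bar/Bousfield--Kan model, the induced cellular map via naturality, the skeletal filtration by chain length with the gluing (cube) lemma at each stage, and the telescope argument over the closed cofibrations $F_{n-1}\hookrightarrow F_n$ all go through (one small wording point: it is the inclusions $\partial\Delta^n\times\D(p_0)\hookrightarrow\Delta^n\times\D(p_0)$ that are the closed cofibrations needed for the gluing lemma, not the attaching maps into $F_{n-1}$).
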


\begin{lemma}[{Wedge Lemma \cite[Lemma 4.9]{WZZ99}}] \label{WedgeLemma}
Let $\PP$ be a poset with a maximal element and let $\E$ be a $\PP$-diagram.
Suppose that all maps in $\E$ are constant morphisms of pointed spaces, then
\[ \hcolim \E \simeq \bigvee_{p \in \PP}\left( \Delta(\PP_{<p}) * \E(p)\right).\]
\end{lemma}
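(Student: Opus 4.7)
The plan is to compute $\hcolim\E$ via the Bousfield--Kan simplicial model. With the convention used in the paper (so that a $\PP$-diagram sends an arrow $p > q$ to a map $\E(p)\to\E(q)$), the cells of $\hcolim\E$ are pairs consisting of a descending chain $p_0 > p_1 > \cdots > p_n$ in $\PP$ and a cell of $\E(p_0)$. The face maps $d_i$ for $i\geq 1$ are purely combinatorial, while $d_0$ deletes $p_0$ and applies the structural map $\E(p_0 > p_1)$. Since by hypothesis every such map is constant at the basepoint, every $d_0$ collapses to the basepoint, and cells anchored at different top elements $p_0 = p$ become decoupled apart from the common basepoint. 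This would give the wedge decomposition $\hcolim\E\simeq \bigvee_{p\in\PP} X_p$, with the summand $X_p$ indexed by the anchor.

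For fixed $p$, I would identify $X_p$ as follows. The simplices with top $p$, together with the combinatorial face maps $d_i$ for $i\geq 1$, assemble into the simplicial cone $K_p := C(\Delta(\PP_{<p}))$ with apex $p$. The $d_0$-collapse identifies the base $\Delta(\PP_{<p}) \subset K_p$ (crossed with $\E(p)$) with the basepoint; together with the standard pointed collapse of $K_p\times *_{\E(p)}$, this yields
\[ X_p \;\cong\; \frac{K_p\times \E(p)}{\Delta(\PP_{<p})\times \E(p)\;\cup\;K_p\times *_{\E(p)}} \;\cong\; \Sigma\Delta(\PP_{<p})\wedge \E(p), \]
using the elementary identities $(X\times Y)/(A\times Y\cup X\times B)\cong (X/A)\wedge (Y/B)$ and $C(Z)/Z\cong \Sigma Z$. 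Since the paper defines $X*Y:=X\wedge Y\wedge\mathbb{S}^1$ and $\Sigma Z\simeq Z\wedge\mathbb{S}^1$ for well-pointed $Z$, this identifies $X_p\simeq \Delta(\PP_{<p})*\E(p)$.

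The hard part will be justifying rigorously that the $d_0$-collapses really produce a genuine wedge rather than a pointed union with hidden identifications. I would verify, by inspection of the Bousfield--Kan formulas, that $d_0$ is the unique face map which mixes different anchors $p_0$, so that after its collapse the pieces $X_p$ are glued only at the common basepoint. The hypothesis that $\PP$ has a maximum enters here to ensure a single, well-defined basepoint across all summands (coming from the anchor $p=\hat 1$, whose contribution is contractible). Once this wedge structure is in place, the quotient computation above yields the formula $\hcolim\E\simeq \bigvee_{p\in\PP}\Delta(\PP_{<p})*\E(p)$.
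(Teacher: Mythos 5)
This lemma is quoted verbatim from \cite[Lemma 4.9]{WZZ99}; the paper gives no proof of it, so I can only measure your argument against the standard one. Your overall architecture is right: the explicit simplicial/poset model of $\hcolim\E$, the decomposition of cells by their top element (anchor) $p_0$, the identification of the simplices anchored at $p$ with the cone $K_p=C(\Delta(\PP_{<p}))=\Delta(\PP_{\leq p})$, and the final quotient computation giving $\left(K_p/\Delta(\PP_{<p})\right)\wedge\E(p)\simeq\Delta(\PP_{<p})*\E(p)$. But the step you yourself flag as ``the hard part'' is resolved incorrectly, and that step is where the actual content of the lemma lies.

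Collapsing $d_0$ does \emph{not} leave the pieces $X_p$ glued only at a common basepoint. The face map $d_0$ sends the cell $(p_0>p_1>\cdots>p_n)\times\E(p_0)$ to $(p_1>\cdots>p_n)\times\{*\}$: the $\E$-coordinate is crushed to the basepoint, but the simplex coordinate survives. Hence $\Delta(\PP_{<p})\times\E(p)\subset K_p\times\E(p)$ is identified not with a point but with the subcomplex $\Delta(\PP_{<p})\times\{*\}$ of the basepoint skeleton $\bigcup_q K_q\times\{*_q\}$, and that skeleton is exactly the order complex $\Delta(\PP)$. So the simplicial model gives, on the nose, the pieces $K_p\times\E(p)$ glued along $\Delta(\PP)$ --- not a wedge --- and your displayed homeomorphism for $X_p$ only holds \emph{after} $\Delta(\PP)$ has been collapsed. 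The missing step, and the true role of the hypothesis that $\PP$ has a maximum $\hat1$, is that $\Delta(\PP)=\hat1*\Delta(\PP_{<\hat1})$ is a cone, hence contractible, and its inclusion is a cofibration, so collapsing it is a homotopy equivalence; this is also why the conclusion is only a homotopy equivalence and not a homeomorphism. Without the maximum the statement is simply false (take $\PP$ to be two incomparable points and $\E$ constant at a point: $\hcolim\E=\mathbb{S}^0$, while the right-hand side is a point), so your explanation that the maximum merely ``ensures a single, well-defined basepoint'' cannot be the right one. Replacing that sentence with the collapse of the contractible subcomplex $\Delta(\PP)$ closes the gap and recovers the argument of \cite{WZZ99}.
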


We now prove the following result on compactifications of subtori:

\begin{lemma} \label{Lemma:homotopy_equivalence}
Let $\A$ be an arrangement of subtori.
For each $W \in \LLL$ there exists a homotopy equivalence $h_W \colon \widehat{W} \to \widehat{W}$ such that, for all $L>W$, the following diagram commutes.
\begin{center}
\begin{tikzcd}
\widehat{L} \ar[r, hook] \ar[d] & \widehat{W} \ar[d, "h_W"] \\
\{\infty\} \ar[r] & \widehat{W}
\end{tikzcd}
\end{center}
\end{lemma}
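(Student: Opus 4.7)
The plan is to construct $h_W$ as the value at $t=0$ of a homotopy of $\mathrm{id}_{\widehat{W}}$ that crushes the closed subset
\[ A \coloneqq \bigcup_{L > W} \widehat{L} \subset \widehat{W} \]
to the basepoint $\infty$. The homotopy will be a ``scaling toward the coordinate hyperplanes'', and the positive system will be exactly what makes it continuous on the right subspace.

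First I apply the existence of positive systems to the arrangement induced by $\A$ on $W$ (i.e.\ to the family of intersections $S_i \cap W$ for $S_i \not\supseteq W$). This gives coordinates $(z_1,\dots,z_k)$ on $W \cong (\C^*)^k$ such that every layer $L > W$ lies in a hypertorus $\{z_1^{n_1}\cdots z_k^{n_k}=c\}$ with $(n_1,\dots,n_k)\in \N^k\setminus\{0\}$ and $c\in\C^*$. I then define
\[ F\colon A\times [0,1] \to \widehat{W}, \qquad F(z,t)=tz \text{ for } z\in A\cap W,\ t>0, \qquad F(z,0)=F(\infty,t)=\infty. \]
Continuity of $F$ is the crux: for $z\in L\subset\{z_1^{n_1}\cdots z_k^{n_k}=c\}$ one has $\prod_i(tz_i)^{n_i}=t^N c$ with $N\coloneqq\sum_i n_i\geq 1$, so $\prod_i|tz_i|^{n_i}=t^N|c|\to 0$ as $t\to 0$; any limit $w\in W$ of $t_n z_n$ would instead satisfy $\prod_i|w_i|^{n_i}>0$, which is impossible, and hence $t_n z_n \to \infty$ in $\widehat{W}$. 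The same identity, together with the observation that a compact subset of $L$ is compact in $W$, handles the remaining corner cases $(z_n,t_n)\to(\infty,t_0)$ for $t_0\geq 0$.

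Because $A$ is closed and semi-algebraic (so it is a subcomplex of a suitable CW structure on $\widehat{W}$), the inclusion $A\hookrightarrow\widehat{W}$ is a cofibration. The homotopy extension property applied to $F$ and the identity on $\widehat{W}\times\{1\}$ then produces $\tilde F\colon \widehat{W}\times[0,1]\to\widehat{W}$ with $\tilde F(\cdot,1)=\mathrm{id}$ and $\tilde F|_{A\times[0,1]}=F$. Setting $h_W\coloneqq\tilde F(\cdot,0)$ yields a self-map of $\widehat{W}$ homotopic to the identity (hence a homotopy equivalence) whose restriction to each $\widehat{L}$ (for $L>W$) is the constant map at $\infty$; this is exactly the commutativity required by the diagram, and $h_W(\infty)=\infty$ ensures the map is pointed.

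The hardest step is the continuity verification for $F$. The naive global scaling $\widehat{W}\times[0,1]\to\widehat{W},\ (z,t)\mapsto tz$ is \emph{not} continuous at $(\infty,0)$ — for instance, the sequence $(e^n,e^{-n})$ in $\C^*\times(0,1]$ converges to $(\infty,0)$ but $e^n\cdot e^{-n}=1\not\to\infty$ in $\widehat{\C^*}$. Restricting the domain to $A$, where the positivity of the exponents $n_i$ forces $\prod|tz_i|^{n_i}=t^N|c|\to 0$, is precisely what rescues continuity and allows the HEP extension to produce $h_W$.
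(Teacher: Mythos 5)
Your proof is correct and rests on the same key idea as the paper's: the positive system forces every layer $L>W$ into a hypertorus $\{z_1^{n_1}\cdots z_k^{n_k}=c\}$ with nonnegative exponents, which is exactly what makes ``scaling toward the origin'' push the layers to $\infty$ in $\widehat{W}$. The only difference is packaging: the paper writes down $h_W$ explicitly as the radial collapse of the complement of a small ball $D_\epsilon$ disjoint from all layers (homotopic to the identity by construction), whereas you define the crushing homotopy only on $\bigcup_{L>W}\widehat{L}$ and invoke the homotopy extension property for the (triangulable, hence cofibered) pair — a legitimate variant, consistent with the cofibration hypotheses the paper already needs for the Projection Lemma.
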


\begin{proof}
Consider a positive system $\mathbf{B}, \mathbf{B}_i$ for the restricted arrangement in $W$
\[\A^W =\{S \cap W \mid S \in \A, S \cap W \neq \emptyset, W \},\]
i.e.\ a basis $\mathbf{B}$ of $\Lambda_W$ and a basis $\mathbf{B}_i$ of $\Lambda_{S_i}$ for each atom $S_i \in \A_W$.
The basis $\mathbf{B}$ gives an isomorphism between $W$ and $(\C^*)^{\dim W}$.
Let $\epsilon \in \R^+$ be the minimum of the distance between $0 \in \C^{\dim W}$ and $S_i$ for all atoms $S_i \in \A_W$.

Each layer $L\in \LLL_{>W}$ of the restricted arrangement $\A_W$ is contained in a hypertorus 
\[\{\underline{z} \in \C^{\dim W} \mid z_1^{n_1}\dots z_{\dim W}^{n_{\dim W}} = c \}\]
for some $n_i \in \N$ and some $c \in \C^*$.
Hence $\epsilon$ is positive and each layer $L$ is disjoint to the ball $D_\epsilon \subset \C^{\dim W}\subset \mathbb{S}^{2\dim W}$ of center $0$ and radius $\epsilon$.

Choose a continuous, increasing and surjective function $f\colon [0,\epsilon) \to [0, \infty)$ and define $\tilde{h}_W \colon \mathbb{S}^{2\dim W} \to \mathbb{S}^{2\dim W}$ by 
\[ x \mapsto 
\begin{cases}
f(|x|)x & \textnormal{if } x \in D_\epsilon, \\
\infty & \textnormal{otherwise},
\end{cases}\]
where $|x|$ is the distance of $x$ from $0$ and $\infty$ is the unique point in $\mathbb{S}^{2\dim W} \setminus \C^{2\dim W}$.
It easy to see that $\tilde{h}_W$ induces a homotopy equivalence $h_W \colon \widehat{W} \to \widehat{W}$.
The commutativity of the diagram above follows from $L \cap D_\epsilon = \emptyset$.
\end{proof}

The previous results now allow us to describe the Alexandroff compactification of the union of the subtori of the arrangement:

\begin{lemma} \label{Lemma:decomposition_UA}
There exists a homotopy equivalence
\[ \widehat{\UA} \simeq \bigvee_{w \in \hLL} \left( \widehat{W}* \Delta(T,W) \right)\]
\end{lemma}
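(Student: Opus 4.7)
My plan is to present $\widehat{\UA}$ as the colimit of the diagram $\D$ and then apply, in succession, the Projection, Homotopy, and Wedge Lemmas of Welker--Ziegler--\v{Z}ivaljevi\'c to transform it into the desired wedge.

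The starting identification is $\widehat{\UA} \cong \colim \D$. Every atom $S_i$ of $\A$ is a layer in $\LLL_{>T}$; for $W \subseteq L$ between layers the natural inclusion $\widehat{W} \hookrightarrow \widehat{L}$ sends $\infty$ to $\infty$; and the object $\hat{1}$ merely serves to enforce that all basepoints are identified. Gluing these compactifications along their inclusions recovers $\widehat{\bigcup_W W} = \widehat{\UA}$. Choosing compatible CW structures on the compactified layers makes each inclusion a closed cofibration, so the Projection Lemma gives $\widehat{\UA} \simeq \hcolim \D$.

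Next, \Cref{Lemma:homotopy_equivalence} provides, for every $W \in \hLL\setminus\{\hat{1}\}$, a self-homotopy equivalence $h_W \colon \widehat{W} \to \widehat{W}$ that collapses every sub-compactification $\widehat{L} \subseteq \widehat{W}$ (with $L > W$) to the basepoint; setting $h_{\hat{1}} = \mathrm{id}$ extends this to a morphism of diagrams $h\colon \D \to \E$. The commutativity required for naturality on a relation $W > L$ in $\hLL$ is precisely the equation $h_L \circ (\widehat{W} \hookrightarrow \widehat{L}) = \mathrm{const}_\infty = \E(W>L) \circ h_W$, which is the content of \Cref{Lemma:homotopy_equivalence} with appropriate relabeling. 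Since every $h_W$ is a homotopy equivalence, the Homotopy Lemma yields $\hcolim \D \simeq \hcolim \E$.

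Finally, all structure maps of $\E$ are constant morphisms of pointed spaces and $\hLL$ has maximum $\hat{1}$, so the Wedge Lemma applies and gives
\[\hcolim \E \simeq \bigvee_{W \in \hLL} \Delta(\hLL_{<W}) * \E(W).\]
For $W \in \LLL_{>T}$ the poset $\hLL_{<W}$ is exactly the open interval $(T,W)$ of $\LLL$, so $\Delta(\hLL_{<W}) = \Delta(T,W)$ and $\E(W) = \widehat{W}$; the remaining term indexed by $\hat{1}$ is a cone over $\Delta(\LLL_{>T})$, hence contractible and absorbed into the basepoint of the wedge. Concatenating the equivalences $\widehat{\UA} \simeq \hcolim \D \simeq \hcolim \E$ then yields the desired decomposition.

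The main obstacle, in my view, lies entirely in the first step. One must verify that $\colim \D$ is genuinely homeomorphic (not merely set-theoretically equal) to $\widehat{\UA}$ and that every inclusion in $\D$ is a closed cofibration; both demand a careful choice of CW or semi-algebraic structure on the ambient sphere compatible with every layer. Once that technical point is settled, the chain of three lemmas is essentially formal.
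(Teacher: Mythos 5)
Your proof is correct and takes essentially the same route as the paper's: identify $\widehat{\UA}$ with $\colim \D$, then chain the Projection Lemma, the Homotopy Lemma applied to the morphism $h\colon \D \to \E$ built from \Cref{Lemma:homotopy_equivalence}, and the Wedge Lemma. Your added remarks on the cofibration hypothesis and the absorption of the contractible $\hat{1}$-term are details the paper leaves implicit, not a different argument.
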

\begin{proof}
Consider the maps $h_W$ given by \Cref{Lemma:homotopy_equivalence} and let $h_{\hat{1}}\colon \{\infty\} \to \{\infty\}$ be the only map.
This data define a morphism $h \colon \D \to \E$ of $\hLL$-diagrams.
We have
\[ \widehat{\UA} \simeq \colim \D \simeq  \hcolim \D \simeq \hcolim \E \simeq \bigvee_{w \in \hLL} \left( \widehat{W}* \Delta(T,W) \right), \]
where the first isomorphism follow by the definition of $\colim$, the others by the projection \Cref{ProjLemma}, the homotopy \Cref{HomotLemma} applied to $h$, and the wedge \Cref{WedgeLemma} respectively.
\end{proof}

\begin{theorem}\label{thm:main_additive}
Let $\A$ be an arrangements of subtori of a torus $T$ and $\LLL$ be it poset of layers. Then the cohomology groups of the complement $M_\A$ are
\[ H^k(M_\A;\Z) \cong \bigoplus_{W \in \LLL} \:\bigoplus_{p+q=k} H^p(W;\Z) \otimes_\Z \tH_{2 \cd W - 2-q}(\Delta(T,W)))\]
\end{theorem}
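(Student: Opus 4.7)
The plan is to apply Alexander duality to the embedding of \Cref{Lemma:embedding}, split the resulting homology via the wedge decomposition, and then unpack each summand by combining \Cref{Lemma:decomposition_UA} with the standard join--suspension identity, K\"unneth, and Poincar\'e duality.

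First, from the embedding $M_\A \hookrightarrow \mathbb{S}^{2d}$ and the identification $\mathbb{S}^{2d} \setminus M_\A = \widehat{\UA} \vee \widehat{\cup \mathcal{B}_{2d}}$, Alexander duality gives
\[ \tH^k(M_\A;\Z) \cong \tH_{2d-k-1}(\widehat{\UA};\Z) \oplus \tH_{2d-k-1}(\widehat{\cup \mathcal{B}_{2d}};\Z). \]
A further application of Alexander duality to $T \hookrightarrow \mathbb{S}^{2d}$ (whose complement, up to homotopy, is $\widehat{\cup \mathcal{B}_{2d}}$) identifies the second summand with $\tH^k(T;\Z)$, accounting for the $W = T$ term in the stated formula.

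Next, I would invoke \Cref{Lemma:decomposition_UA} to write $\widehat{\UA} \simeq \bigvee_{W \in \LLL \setminus \{T\}} (\widehat{W} * \Delta(T,W))$, the summand at $\hat{1}$ being contractible as it is a cone on $\Delta(\hLL_{<\hat{1}})$. For each $W \neq T$, the equivalence $X * Y \simeq \Sigma(X \wedge Y)$ together with the K\"unneth theorem for the smash product gives
\[ \tH_{2d-k-1}(\widehat{W} * \Delta(T,W)) \cong \bigoplus_{a+b=2d-k-2} \tH_a(\Delta(T,W)) \otimes_\Z \tH_b(\widehat{W}), \]
provided $\tH_*(\widehat{W})$ is torsion-free. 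This would follow from the topological product decomposition $W \cong \R^{\dim W} \times (S^1)^{\dim W}$ and the resulting identification $\widehat{W} \cong S^{\dim W} \wedge ((S^1)^{\dim W})_+$.

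Finally, Poincar\'e duality for the oriented $2\dim W$-manifold $W$ gives $\tH_b(\widehat{W}) \cong H^{2\dim W - b}(W;\Z)$; substituting $p := 2\dim W - b$ and $q := k - p$, the constraint $a + b = 2d - k - 2$ becomes $a = 2\cd W - 2 - q$, and the sum takes the stated form. The main obstacle will be eliminating the Tor terms in the K\"unneth formula, which reduces to the torsion-freeness of $\tH_*(\widehat{W})$; a secondary subtlety is the correct interpretation of the nominal $W = T$ summand in the stated formula, which actually comes from the separate Alexander-duality argument on the Boolean piece $\widehat{\cup \mathcal{B}_{2d}}$.
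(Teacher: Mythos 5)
Your proposal is correct and follows essentially the same route as the paper: Alexander duality applied to the embedding of \Cref{Lemma:embedding}, the wedge decomposition of \Cref{Lemma:decomposition_UA}, the join--smash--suspension identity with K\"unneth, and finally Poincar\'e duality identifying $\tH_b(\widehat{W})$ with $H^{2\dim W-b}(W)$ (the paper phrases this via Borel--Moore homology). Your added remarks on the torsion-freeness of $\tH_*(\widehat{W})$ and on the provenance of the $W=T$ summand are correct points of care that the paper's proof leaves implicit.
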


\begin{proof}
Consider the embedding $M_\A \subset S^{2d}$ of such that $S^{2d} \setminus M_\A = \widehat{\UA} \vee \widehat{B_d}$, provided by \Cref{Lemma:embedding}.
We use the Alexander duality  (see for instance \cite[Theorem 3.44]{Hatcher}) to obtain
\[\tH^k(M_\A)\cong \tH_{2d-k-1}(\widehat{\UA} \vee \widehat{B_d}) \cong  \tH_{2d-k-1}(\widehat{\UA}) \oplus \tH_{2d-k-1}(\widehat{B_d}).\]
Again Alexander duality for the embedding $\widehat{B_d} \subset S^{2d}$ gives the isomorphism $\tH_{2d-k-1}(\widehat{B_d})\cong \tH_{k}(T)$.
Now, \Cref{Lemma:decomposition_UA} implies 
\begin{align*}
\tH_{2d-k-1}(\widehat{\UA}) &\cong \tH_{2d-k-1} \Big( \bigvee_{w \in \hLL} \left( \widehat{W}* \Delta(T,W) \right) \Big) \\
& \cong \bigoplus_{w \in \hLL} \tH_{2d-k-1}\left( \widehat{W}* \Delta(T,W) \right) \\
& \cong \bigoplus_{w \in \hLL} \tH_{2d-k-2}\left( \widehat{W} \wedge \Delta(T,W) \right) \\
& \cong \bigoplus_{w \in \hLL} \bigoplus_{p+q=k} \tH_{2\dim W-p} ( \widehat{W}) \otimes_\Z \tH_{2\cd W-q-2}(\Delta(T,W))),
\end{align*}
where the last isomorphism is the Kunneth isomorphism for reduced cohomology applied to the smash product.
We conclude the proof by the Poincarè duality on $W$ between Borel-Moore homology and cohomology (see \cite[Theorem 9.2]{Bredon}):
\[\tH_{2\dim W-p} ( \widehat{W}) = H^{BM}_{2\dim W-p} (W) \cong H^p(W). \qedhere\]
\end{proof}

\section{Graded of the cohomology ring}
In this section we study the Leray spectral sequence for  the inclusion map $j \colon M_\A \to T$ to give a description of the graded cohomology ring $\gr^\Fil_\bigcdot H^\bigcdot (M_A;\Z)$.
We refer to \cite[Sect.~6]{Bredon} as a general reference on this spectral sequence.

Let $R^q j_*$ be the higher direct image functor of $j$.
In our case the Leray spectral sequence
\[E_r^{p,q} \Rightarrow H^{p+q}(M_A;\Z) \]
converges to $H^{p+q}(M_A;\Z)$; the second page of this spectral sequence is 
\[E_2^{p,q}=H^p(R^q j_* \Z_{M_\A}),\]
where $\Z_{M_\A}$ is the constant sheaf and the Leray filtration $\Fil^\bigcdot$ on $H^{k}(M_A;\Z)$ is defined by
\[\Fil^q = \im \left( H^k (T;\tau_{k-q} \mathbb{R}j_*\Z)\to H^k (T; \mathbb{R}j_*\Z) \cong H^k(M_\A;\Z) \right)\]

For each $W \in \LLL$, let $\epsilon^q_W$ be the sheaf on $T$ defined by 
\[\epsilon^q_W = (i_W)_* \Z_W \otimes_Z H_{2 \cd W -q-2}(\Delta(T,W))\]
where $i_W$ is the closed embedding of $W$ in $T$.
We set 
\[\epsilon^q= \bigoplus_{W \in \LLL} \epsilon^q_W.\]
The following lemma generalizes \cite[Lemma 3.1]{Bibby16}.
\begin{lemma}\label{lemma:higher_direct_image}
Let $\A$ be an arrangement of subtori. Then
\[E_2^{p,q}=H^p(R^q j_* \Z_{M_\A}) \cong \bigoplus_{W \in \LLL} H^p(W;\Z) \otimes \tH_{2\cd W-q-2}(\Delta(T,W);\Z). \]
\end{lemma}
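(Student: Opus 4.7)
The plan is to prove the stronger statement that $R^q j_* \Z_{M_\A} \cong \epsilon^q$ as sheaves on $T$; the lemma then follows formally because $i_W$ is a closed embedding, so $(i_W)_*$ is exact and $H^p(T; (i_W)_* \Z_W) = H^p(W; \Z)$, giving
\[ H^p(T; \epsilon^q) = \bigoplus_{W \in \LLL} H^p(W;\Z) \otimes \tH_{2\cd W - q - 2}(\Delta(T, W)). \]

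The first step is a local computation at an arbitrary point $x$ of the open stratum $W^\circ = W \setminus \bigcup_{L > W} L$ of a layer $W \in \LLL$. Using the group structure of $T$, one chooses (after passing to a finite cover if needed) a complementary subtorus $N \cong (\C^*)^{\cd W}$ so that a small polydisc neighbourhood $U$ of $x$ splits as $U_W \times U_N$ with $U_W \subset W^\circ$ and $U_N$ a ball in $N$. Any layer $L$ meeting $U$ must contain $W$ (i.e.\ $L \leq W$ in $\LLL$) and, in these product coordinates, restricts to $U_W \times F_L$ for a linear subspace $F_L \subseteq N$ with $\cd_N F_L = \cd_T L$. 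Hence $U \cap M_\A \simeq U_W \times M_{\A'_W}$, where $\A'_W$ is a central linear subspace arrangement in $N$ whose poset of flats is canonically identified with the closed interval $[T, W] \subseteq \LLL$.

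Taking the colimit as $U$ shrinks yields $(R^q j_* \Z_{M_\A})_x \cong H^q(M_{\A'_W}; \Z)$. Applying \Cref{thm:main_additive} to $\A'_W$, every flat is a contractible linear subspace, so only the $p=0$ summand of that theorem contributes, and
\[ (R^q j_* \Z_{M_\A})_x \;\cong\; \bigoplus_{L \leq W} \tH_{2\cd L - q - 2}(\Delta(T, L); \Z), \]
where I use the identifications $\cd_N F_L = \cd_T L$ and $\Delta_{\A'_W}(\hat 0, F_L) = \Delta(T, L)$. This matches the stalk of $\epsilon^q$ at $x$, since $((i_L)_* \Z_L)_x = \Z$ iff $L \ni x$ iff $L \leq W$ in $\LLL$.

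To upgrade this stalkwise agreement into an isomorphism of sheaves, I would construct a natural map $\epsilon^q \to R^q j_* \Z_{M_\A}$ whose $W$-component sends the canonical generator of $\tH_{2\cd W - q - 2}(\Delta(T, W))$ to the cohomology class realised, via Alexander duality in a tubular neighbourhood of $W$, by the corresponding cycle in $\widehat{W} * \Delta(T, W)$ supplied by \Cref{Lemma:decomposition_UA}. Because the tubular-neighbourhood splittings above come from the group structure of $T$, they vary continuously in $x$; hence both $R^q j_* \Z_{M_\A}$ and $\epsilon^q$ are constructible with respect to the stratification $T = \bigsqcup_W W^\circ$, their restrictions to each stratum are \emph{constant} local systems (no monodromy), and the natural map is globally well-defined. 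Being a stalkwise isomorphism on every stratum, it is an isomorphism of sheaves, and the lemma follows. The principal obstacle is this globalisation step: one must check that the local product decompositions glue coherently across strata of different codimensions, so that the linking-sphere classes assemble into a genuine morphism of sheaves rather than a mere collection of stalkwise identifications; here the translation invariance of $T$ and the positive systems of Section~1 are the essential geometric ingredients, generalising to arbitrary codimension the codimension-one argument of \cite[Lemma 3.1]{Bibby16}.
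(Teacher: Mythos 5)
Your overall strategy — identify $R^q j_*\Z_{M_\A}$ with the sheaf $\epsilon^q$, then take cohomology — matches the paper. However, there are two substantive differences, one of which is a genuine gap.

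First, a slip that is easy to fix: having reduced the stalk computation to a \emph{central linear subspace arrangement} $\A'_W$ in $N \cong (\C^*)^{\cd W}$ (really, in the ambient vector space), you then invoke \Cref{thm:main_additive}. That theorem is about arrangements of \emph{subtori}, not about linear subspace arrangements, so it does not directly apply; the correct reference at this point is the Goresky--MacPherson \Cref{Thm:GMP}, which is exactly what the paper uses. The formulas coincide because all flats of a central subspace arrangement are contractible, so only $p = 0$ survives, but the logical appeal should be to \Cref{Thm:GMP} rather than to a theorem the paper itself is proving.

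Second, and more seriously, the globalisation step is where your argument has a real gap. You correctly identify it as ``the principal obstacle,'' but the mechanism you propose — constructibility plus constancy on strata, and coherence of the product splittings $U \cong U_W \times U_N$ across strata of different codimensions — is precisely what the paper avoids having to verify. The paper's local model is not a product decomposition at all: it identifies a neighbourhood $U$ of $x$ with an open subset $V_x$ of the tangent space $\TT_x T$, and observes that $U \cap M_\A$ is then the complement of the tangent-space arrangement $\A[x] = \set{\TT_x S \mid x \in S \in \A}$. Defining $f(U)$ directly on a neighbourhood basis in this way yields a presheaf morphism on a basis, hence a sheaf morphism, and the stalkwise isomorphism is automatic; no complementary subtorus needs to be chosen, no finite cover is needed, and there is no gluing issue across strata. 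Your product-decomposition route could in principle be made to work, but as written the globalisation is only asserted, and the choice of $N$ (which may require passing to a cover, as you note) introduces extra bookkeeping that the paper's tangent-space model sidesteps entirely.
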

\begin{proof}
First we prove that $\epsilon^q \cong R^q j_* \Z_{M_\A}$: for each point $x \in T$ there exists an open set $U_x$ isomorphic to an open subset $V_x$ of the tangent space $\TT_x T$ (containing the origin).
We also take a neighborhood basis $\mathcal{U}$ given for every $x\in T$ by the inverse image of all open balls in $V_x$ centered in $0 \in \TT_x T$.
Notice that the arrangement of subtori $\A$ defines a central arrangement of subspaces 
\[\A[x]=\set{\TT_x S \mid x \in S \in \A}\]
in $\TT_x T$ for all $x \in T$.

We define a morphism of sheaves $f\colon \epsilon^q \to R^q j_* \Z_{M_\A}$ on the neighborhood basis $\mathcal{U}$ as follow.
For all $U \in \mathcal{U}$ centered in $x$, let $f(U)\colon \epsilon(U) \to R^qj_*\Z(U)$ be the composition
\[\epsilon(U) = \bigoplus_{W \ni x} \Z \otimes_\Z \tH_{2 \cd W -q-2}(\Delta(T,W)) \cong H^q(M_{\A[x]}) \cong H^q(U \cap M_\A)= R^q j_*\Z (U), \]
where the first isomorphism is given by \Cref{Thm:GMP} and the second one is given by the composition 
\[M_{\A[x]} \simeq V_x \setminus \cup \A[x] \cong U \setminus \cup \A.\]
Since $f(U)$ is an isomorphism for all $U \in \mathcal{U}$ then $f$ is an isomorphism of sheaves.
Now, the isomorphism 
\[H^p(\epsilon^q) \cong \bigoplus_{W \in \LLL} H^p(W;\Z) \otimes \tH_{2\cd W-q-2}(\Delta(T,W);\Z)\]
completes the proof.
\end{proof}

The minimum of the poset $\LLL$ (and of $\LLL_{\leq W}$ for all $W\in \LLL$) is $\hat{0}=T$.
Let $E_W^{p,q}\subset E_2^{p,q}$ be the $\Z-$module $H^p(W;\Z) \otimes \tH_{2\cd W-q-2}(\Delta(T,W);\Z)$: this module depends only on the cohomology of $W$ and on the poset $\LLL_{\leq W}$.

The multiplication in $E_2$ is induced by the maps
\[\eta_{W,W'}^L \colon E_W^{p,q} \otimes E_{W'}^{p',q'} \to E^{p+p',q+q'}_L\]
where $\eta_{W,W'}^L=0$ if $\cd L \neq \cd W + \cd W'$ or if $L$ is not a connected component of $W \cap W'$, otherwise 
\[\eta_{W,W'}^L(a \otimes b \otimes a' \otimes b') = (-1)^{p'q} (a\smile a') \otimes (b \bullet b')\]
where $\bullet \colon \tH_k(\Delta(\hat{0},W)) \otimes \tH_{k'}(\Delta(\hat{0},W')) \to \tH_{k+k'+2}(\Delta(\hat{0},L))$ is the map of \cite[Theorem 6.6 (ii)]{Yu02}.
Under the isomorphism of \Cref{Thm:GMP} the map $\bullet$ corresponds to the cup product in the cohomology of the subspace arrangement $\A[x]$ (for any $x \in L$).

\begin{theorem}\label{thm:spectral}
The Leray spectral sequence $E_r^{p,q}$ for the inclusion $M_\A \hookrightarrow T$ degenerates at the second page, i.e. 
\[E_2^{p,q} \cong \gr^{\Fil}_{p+2q} H^{p+q} (M_\A ;\Z).\]
\end{theorem}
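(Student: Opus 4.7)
The plan is to upgrade the abstract identification of $E_2$ as an abelian group to genuine degeneration by introducing an additive invariant. First, \Cref{lemma:higher_direct_image} gives
\[ E_2^{p,q} \cong \bigoplus_{W\in \LLL} H^p(W;\Z)\otimes_\Z \tH_{2\cd W-q-2}(\Delta(T,W);\Z). \]
Summing over $p+q=n$ and comparing with \Cref{thm:main_additive} yields, for every total degree $n$, an abstract isomorphism of finitely generated $\Z$-modules $\bigoplus_{p+q=n} E_2^{p,q} \cong H^{n}(M_\A;\Z)$.

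For the degeneration I would introduce, for each prime $\ell$, the invariant
\[ \rho_\ell(A) := \dim_{\mathbb{F}_\ell}(A\otimes_\Z \mathbb{F}_\ell) + \dim_{\mathbb{F}_\ell}\mathrm{Tor}_1^\Z(A,\mathbb{F}_\ell) \]
on finitely generated $\Z$-modules. The $\mathrm{Tor}$ long exact sequence (which stops at $\mathrm{Tor}_1$ since $\mathbb{F}_\ell$ admits the length-two resolution $0\to \Z\xrightarrow{\ell}\Z\to\mathbb{F}_\ell\to 0$) shows that $\rho_\ell$ is strictly additive on short exact sequences. Three consequences are relevant: (i) $\rho_\ell$ is monotone under subquotients; (ii) $\rho_\ell(\gr^\Fil H^n) = \rho_\ell(H^n)$, since a finite filtration is a succession of short exact sequences; and (iii) $\rho_\ell(C)=0$ for every $\ell$ if and only if $C=0$ (visible from Smith normal form). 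Since each $E_\infty^{p,q}$ is a subquotient of $E_2^{p,q}$ and $\bigoplus_{p+q=n} E_\infty^{p,q} = \gr^\Fil H^n(M_\A;\Z)$, I obtain
\[ \sum_{p+q=n}\rho_\ell(E_\infty^{p,q}) = \rho_\ell(\gr^\Fil H^n) = \rho_\ell(H^n) = \sum_{p+q=n} \rho_\ell(E_2^{p,q}), \]
and the termwise inequality $\rho_\ell(E_\infty^{p,q})\leq \rho_\ell(E_2^{p,q})$ must then be an equality for every bidegree and every prime. Writing $E_\infty^{p,q}=M/N$ with $N\subseteq M\subseteq E_2^{p,q}$, additivity forces $\rho_\ell(N)+\rho_\ell(E_2^{p,q}/M)=0$ for all $\ell$, whence $N=0$ and $M=E_2^{p,q}$ by (iii). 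Therefore $E_\infty^{p,q} = E_2^{p,q}$, every differential $d_r$ vanishes for $r\geq 2$, and the degeneration gives the claimed isomorphism with the associated graded.

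The main obstacle is the torsion bookkeeping: the abstract isomorphism $E_2\cong H^*(M_\A;\Z)$ on its own does \emph{not} imply degeneration, because nontrivial torsion extensions could make $\gr^\Fil H^n$ non-isomorphic to $H^n$ as a $\Z$-module (compare $\Z/4$ with $\Z/2\oplus\Z/2$). The invariant $\rho_\ell$ is tailored precisely to circumvent this: being strictly additive on short exact sequences---whereas $\dim(\cdot\otimes\mathbb{F}_\ell)$ is only right-exact and $|\cdot_{\mathrm{tors}}|$ is not additive at all---it tracks size faithfully through both filtrations and subquotients, and thus pins down the equality $E_\infty^{p,q}=E_2^{p,q}$ bidegree by bidegree.
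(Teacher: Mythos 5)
Your overall strategy --- compare $E_\infty$ and $E_2$ term by term via an invariant that is well-behaved under subquotients and filtrations --- is the right idea and is exactly what the paper's very terse ``hence $E_2^{p,q}=E_\infty^{p,q}$'' is implicitly appealing to. However, the specific invariant $\rho_\ell(A)=\dim_{\mathbb{F}_\ell}(A\otimes\mathbb{F}_\ell)+\dim_{\mathbb{F}_\ell}\mathrm{Tor}_1^{\Z}(A,\mathbb{F}_\ell)$ does \emph{not} have the two properties you assert, so the proof does not go through. The $\mathrm{Tor}$ long exact sequence for $0\to N\to M\to C\to 0$ yields that the alternating sum of the six dimensions vanishes, which after rearranging says that the \emph{difference} $\dim(A\otimes\mathbb{F}_\ell)-\dim\mathrm{Tor}_1(A,\mathbb{F}_\ell)$ is additive --- unsurprisingly, since this difference is just $\mathrm{rk}\,A$. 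The \emph{sum} $\rho_\ell$ is only subadditive. Concretely, for $0\to\Z\xrightarrow{\cdot\ell}\Z\to\mathbb{F}_\ell\to 0$ one has $\rho_\ell(\Z)=1$ but $\rho_\ell(\Z)+\rho_\ell(\mathbb{F}_\ell)=1+2=3$. This also kills claim (ii): for $H=\Z$ with the filtration $0\subset\ell\Z\subset\Z$, the graded is $\Z\oplus\Z/\ell$, so $\rho_\ell(\gr H)=3\neq 1=\rho_\ell(H)$. Claim (i) fails as well: $\rho_\ell$ is not monotone under quotients, since $\Z\twoheadrightarrow\Z/\ell$ gives $\rho_\ell(\Z)=1<2=\rho_\ell(\Z/\ell)$. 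Since the chain of (in)equalities you write down depends on exactly these properties, the argument collapses.

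A correct way to fill the gap is to treat rank and torsion order separately, using their genuine (in)equalities. Rank is additive on short exact sequences, monotone under subgroups and quotients, and additive on direct sums; so from $\bigoplus_{p+q=n}E_2^{p,q}\cong H^{p+q}$ and $\bigoplus_{p+q=n}E_\infty^{p,q}\cong\gr^{\Fil}H^{p+q}$ one first deduces $\rk E_\infty^{p,q}=\rk E_2^{p,q}$ for all $(p,q)$, hence each $B_\infty^{p,q}\subseteq E_2^{p,q}$ (the group of eventual boundaries) is finite. For the torsion one uses that passing to the associated graded of a finite filtration can only \emph{increase} the torsion order, i.e.\ $\lvert H^n_{\mathrm{tors}}\rvert\leq\prod_{p+q=n}\lvert(E_\infty^{p,q})_{\mathrm{tors}}\rvert$, while for the subquotient $E_\infty^{p,q}=Z_\infty^{p,q}/B_\infty^{p,q}$ with $B_\infty^{p,q}$ finite one has $\lvert(E_\infty^{p,q})_{\mathrm{tors}}\rvert=\lvert(Z_\infty^{p,q})_{\mathrm{tors}}\rvert/\lvert B_\infty^{p,q}\rvert\leq\lvert(E_2^{p,q})_{\mathrm{tors}}\rvert/\lvert B_\infty^{p,q}\rvert$. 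Chaining these with $\prod_{p+q=n}\lvert(E_2^{p,q})_{\mathrm{tors}}\rvert=\lvert H^n_{\mathrm{tors}}\rvert$ forces $\prod_{p+q=n}\lvert B_\infty^{p,q}\rvert=1$, so $B_\infty=0$, all higher differentials have zero image, and the spectral sequence degenerates. Your write-up needs to be reworked along these lines, because the invariant $\rho_\ell$ as defined cannot carry the weight placed on it.
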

\begin{proof}
We know that $E_{\infty}^{p,q}$ is a subquotient of $E_2^{p,q}$ and that the last page is $E_{\infty}^{p,q}\cong \gr^{\Fil}_{p+2q} H^{p+q} (M_\A ;\Z)$.
By \cref{thm:main_additive,lemma:higher_direct_image}, $E_{\infty}^{p,q}$ and $E_2^{p,q}$ are isomorphic and finitely generated; hence $E_2^{p,q}=E_{\infty}^{p,q}$.
\end{proof}

\section{A model for the complement} 

As in the previous sections, we denote by $\A$  an arrangement of subtori in $T$ and by $\Lambda$ the character group of $T$. Let $\Lambda^*$ be the dual lattice of $\Lambda$. We refer to \cite{CLSbook} for a general introduction to fans and toric varieties.
Let $\Delta$ be a smooth and complete fan in $\Lambda^*$.
Every ray of $\Delta$ is generated by a (uniquely determined) primitive vector in $\Lambda^*$: we denote by $\mathcal{R}_\Delta \subset \Lambda^*$ the set of primitive vectors corresponding to the rays of $\Delta$. Let $\mathcal{P}(\mathcal{R}_\Delta)$ be its set of parts; we denote by $\mathcal{C}_\Delta \subseteq \mathcal{P}(\mathcal{R}_\Delta)$ the collection of the sets of primitive vectors that span a cone in $\Delta$. Thus from now on we identify a cone in $\Delta$ with the set of its extremal primitive vectors.

\begin{definition}
A fan $\Delta$ in $\Lambda^*$ describes a \textit{good toric variety} $X_\Delta$ (with respect to $\A$) if $\Delta$  is complete and smooth and each maximal cone $C \in \mathcal{C}_\Delta$ can be completed to a positive system $(\mathbf{C}, \mathbf{C}_1, \dots, \mathbf{C}_n)$ (where $\mathbf{C}$ is the dual basis of $C$).
\end{definition}
The second condition in the above definition can be reformulated as follow: for each $W\in \A$, there exists a basis $\beta_1, \dots, \beta_{\cd W}$ of $\Lambda_W$ such that for each maximal cone $C \in \mathcal{C}_\Delta$ and each $i=1, \dots, \cd W$ the natural pairing $\langle \beta_i, c \rangle$ is nonnegative (or nonpositive) for all $c \in C$.
In this case, we say that the basis $\beta_1, \dots, \beta_{\cd W}$ of $\Lambda_W$ has the \textit{equal sign property} with respect to $\Delta$ (see \cite[Definition 3.2]{DCG1}).

Let $\mathcal{G}\subseteq \LLL_{>0}$ be a \textit{well connected building set} in the sense of \cite[Definition 4.1]{DCG2} and $\Delta$ a good toric variety.
These data define a smooth projective variety $Y(\Delta, \mathcal{G})$ obtained from $X_\Delta$ by subsequently blowing up (the strict transform of) $W$ for all $W\in \mathcal{G}$ in any total order refining the partial order given by inclusion (so that smaller layers are blown up first).
The variety $Y(\Delta, \mathcal{G})$ is the \emph{wonderful model} for $M_\A$ described in \cite{DCG2}, i.e. a smooth projective variety containing $M_\A$ and such that the complement $Y(\Delta, \mathcal{G}) \setminus M_\A$ is a simple normal crossing divisor.

We want to describe the Morgan algebra (cf \cite{Morgan78}) for the pair $(Y(\Delta, \mathcal{G}),M_\A)$. For the convenience ot the reader, we will briefly recall here the definition of this algebra.
Consider a smooth complete algebraic variety $Y$ and a simple normal crossing divisor $D=\bigcup_{i\in I} D_i$ with complement $M$.
The \textit{Morgan algebra} (see \cite{Morgan78}) is the vector space 
\[\bigoplus_{A\subseteq I} H^\bigcdot\left(\bigcap_{i \in A} D_i\right)\]
with $H^p(\bigcap_{i \in A} D_i)$ of bi-degree $(p,\lvert A \rvert )$.
The multiplication is given by 
\[H^p(\bigcap_{i \in A} D_i)\otimes H^{p'}(\bigcap_{i \in A'} D_i) \to H^{p+p'}(\bigcap_{i \in A\sqcup A'} D_i),\]
i.e. the composition of the restriction maps and the cap product.
The differential is induced by the Gysin morphisms $H^p(\bigcap_{i \in A} D_i) \to H^{p+2}(\bigcap_{i \in B} D_i)$ for every $B=A \setminus \{a\}$.

Let $E$ be the exterior algebra over $\Q$ on generators $s_W,t_W, b_j,c_j$ for $W \in \mathcal{G}$ and $j \in \mathcal{R}_\Delta$, where the bi-degree of $s_W$ and $b_j$ is $(0,1)$ and the bi-degree of $t_W$ and $c_j$ is $(2,0)$. 

In order to understand what relations should put on $E$, we start by recalling the definition of some polynomials $P_L^W(t)$, which were introduced in Section 8 of \cite{DCG2} as \textit{good lifting} of the Chern polynomials\footnote{The authors of \cite{DCG2} forgot to specify that, in order to define a good lifting, the basis of $\Lambda_W$ must have the equal sign property with respect to $\Delta$.}.

For each pair $W\leq L$ in $\LLL$, and for each t-uple $\beta_1, \dots, \beta_{\cd W} \in \Lambda_W$ with the equal sign property with respect to $\Delta$ such that $\beta_{\cd L- \cd W +1}, \dots, \beta_{\cd W}$  form a integral basis of $\Lambda_L$, define 

\[P^W_L(t)= \prod_{i=1}^{\cd L-\cd W} \left(t- \sum_{j\in \mathcal{R}_\Delta} \min(0, \langle \beta_i, j \rangle) c_j \right).
\]
The polynomial $P^W_L(t)$ depends on the choice of $\beta_1, \dots, \beta_{\cd L- \cd W}$.

\begin{definition}
A set $A\subset \mathcal{G}$ is \textit{nested} if the irreducible components of the normal crossing divisor of $Y(\Delta,\mathcal{G})$ that correspond to the elements of $A$  have non-empty intersection. When we want to emphasize the dependence on $\mathcal{G}$ we will say that $A$ is \emph{$\mathcal{G}$-nested}. 
\end{definition}
The property of being nested does not depend on the choice of $\Delta$, and can be expressed in a purely combinatorial way (see \cite[Definition 2.7]{DCG2}).

We recall the following result of De Concini and Gaiffi.
\begin{theorem}{\cite[Theorem 9.1]{DCG2}} \label{thm:DCG}
Let $A\subseteq \mathcal{G}$ and $B\subseteq \mathcal{R}_\Delta$, the intersection $Y_{A \sqcup B}$ is non-empty if and only if $A$ is $\mathcal{G}$-nested, $B$ is contained in $\bigcap_{W \in A} \Ann \Lambda_W$, and $B$ is a cone in $\Delta$.
In this case the cohomology ring $H^\bigcdot (Y_{A\sqcup B})$ is the exterior algebra on generators $\{t_W\}_{W \in \mathcal{G}}$ and $\{c_j\}_{j \in \mathcal{R}_\Delta}$ of degree two with relations:
\begin{enumerate}
    \item[\mylabel{item:prod_toric}{(T1)}] $\prod_{j \in C} c_j$ if $C \not \in \mathcal{C}_\Delta$,
    \item[\mylabel{item:linear_toric}{(T2)}] $\sum_{j \in \mathcal{R}_\Delta} \langle \beta, j \rangle c_j$ for every $\beta \in \Lambda$ (or equivalently for $\beta$ in a fixed basis of $\Lambda$),
    \item[\mylabel{item:c_wonderful}{(W1)}] $\prod_{j \in C} c_j$ if $C \cup B$ is not a cone in $\Delta$ or $C \not \subset \bigcap_{W \in A} \Ann \Lambda_W$,
\footnote{In \cite{DCG2} these relations are stated only for $\lvert C \rvert = 1$; however they hold, before performing blow-ups, for any set $C$, by the well-known theory of toric varieties.
Thus, by adding the relations with $\lvert C \rvert >1$ to the presentation given in \cite{DCG2}, we get a correct presentation.}
    \item[\mylabel{item:tc_wonderful}{(W2)}] $t_W c_j$ if $j \not \in \Ann \Lambda_W$,
    \item[\mylabel{item:P_wonderful}{(W3a)}] for all $W\in \mathcal{G}$ and all $C \subseteq \mathcal{G}_{<W}$, the relations 
    \[P_W^V \left( \sum_{L \in \mathcal{G}_{\geq W}} -t_L \right) \prod_{L \in C} t_L,\]
    where $V$ is the connected component of $\bigcap_{L \in A_{<W}\cup C} L$ containing $W$,
    \item[\mylabel{item:t_wonderful}{(W3b)}] $\prod_{W \in C} t_W$ if $C\cup A$ is not $\mathcal{G}$-nested or $B \not \subset \bigcap_{W \in C} \Ann \Lambda_W$.
\end{enumerate}
\end{theorem}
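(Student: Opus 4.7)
The plan is to proceed by induction on the sequence of blow-ups used to construct $Y(\Delta,\mathcal{G})$ from the smooth complete toric variety $X_\Delta$, and to apply the standard blow-up formula for cohomology at each stage. The base case is the classical Stanley--Reisner description of the cohomology of $X_\Delta$, which already produces the toric relations (T1) and (T2) and the criterion that $\bigcap_{j \in B} D_j$ is non-empty precisely when $B \in \mathcal{C}_\Delta$.

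At each subsequent step one blows up (the strict transform of) a layer $W \in \mathcal{G}$, with smaller layers treated first so that the strict transform is smooth. The new exceptional divisor has class $t_W$, and the projective bundle formula introduces, as the only new relation, a Chern polynomial of the normal bundle of the centre of the blow-up, lifted to the ambient space. The \emph{good lifting} $P_W^V(t)$ plays exactly this role: the equal sign property guarantees that the Chern roots of the normal bundle of $W$ in $X_\Delta$ admit an explicit expression as linear combinations of the toric classes $c_j$, namely $-\sum_{j} \min(0, \langle \beta_i, j\rangle) c_j$, because the rational function on $X_\Delta$ associated with the character $\beta_i$ has divisor $\sum_j \langle \beta_i, j\rangle D_j$ supported on the toric boundary. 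After subsequent blow-ups of layers $V \supsetneq W$, the normal bundle of the strict transform of $W$ is twisted by $\mathcal{O}(-D_V)$ for each such $V$, which precisely accounts for the shift $t \mapsto \sum_{L \in \mathcal{G}_{\geq W}} -t_L$ that appears in (W3a).

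The non-emptiness criterion, together with the remaining relations (W1), (W2), and (W3b), is established by an inductive analysis of which strata survive the blow-ups. The strict transform of $W$ meets a toric divisor $D_j$ only when $j \in \Ann \Lambda_W$, which gives (W2). Nestedness of $A$ emerges from the standard fact that two blow-up exceptional divisors intersect if and only if the corresponding layers form a nested set, giving (W3b); combining nestedness with compatibility with the toric boundary yields (W1). In every non-empty case $Y_{A \sqcup B}$ is a tower of projective bundles over an intersection of strict transforms of layers in $A$, with fibres controlled by the Chern polynomials above, matching the claimed exterior algebra presentation.

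The main obstacle is (W3a) in full generality, i.e. for all subsets $C \subseteq \mathcal{G}_{<W}$ rather than just $C = \emptyset$. One must identify the Chern polynomial of the normal bundle of the strict transform of $W$ \emph{restricted to the stratum indexed by $A_{<W} \cup C$}, rather than in the full ambient variety, and match it with $P_W^V\bigl(\sum_{L \in \mathcal{G}_{\geq W}} -t_L\bigr)$ where $V$ is the connected component of $\bigcap_{L \in A_{<W} \cup C} L$ containing $W$. This requires choosing a basis of $\Lambda_W$ whose last $\cd V$ elements restrict to a basis of $\Lambda_V$ (ensured by good toricity of $\Delta$), together with a careful application of the multiplicativity of the total Chern class in the normal bundle sequence $0 \to N_{W/V} \to N_{W/X_\Delta} \to N_{V/X_\Delta}|_W \to 0$, and induction on the blow-up history. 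Once all relations have been verified, one concludes by comparing Poincar\'e polynomials stratum by stratum, or equivalently by exhibiting an additive basis of admissible monomials in the $t_W$ and $c_j$ and checking that it surjects onto $H^\bigcdot(Y_{A \sqcup B})$.
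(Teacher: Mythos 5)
The paper offers no proof of this statement: it is imported verbatim from \cite[Theorem 9.1]{DCG2}, the only original content being the footnote that extends \ref{item:c_wonderful} to sets $C$ with $\lvert C\rvert>1$ by appealing to the Stanley--Reisner relations on $X_\Delta$ before any blow-up is performed. Your outline --- Danilov--Jurkiewicz for the base case, the blow-up/projective-bundle formula at each stage, identification of the Chern roots of the normal bundle of $\overline{W}$ with $-\sum_{j}\min(0,\langle\beta_i,j\rangle)c_j$ via the equal sign property, and the twist $t\mapsto\sum_{L\in\mathcal{G}_{\geq W}}-t_L$ coming from later blow-ups --- is indeed the strategy of the original proof in \cite{DCG2}, so there is no divergence of approach to report, only a citation versus a sketch. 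Two places in your sketch carry essentially all of the actual difficulty and cannot be dispatched as "standard facts": (i) the assertion that exceptional divisors of the $W\in\mathcal{G}$ meet exactly along $\mathcal{G}$-nested sets, and more generally the non-emptiness criterion for $Y_{A\sqcup B}$, requires the well-connectedness hypothesis on $\mathcal{G}$ and a chart-by-chart analysis of the iterated blow-up, which is a substantial part of \cite{DCG2}; and (ii) for \ref{item:P_wonderful} with $C\neq\emptyset$ one must produce a basis of $\Lambda_W$ adapted to $\Lambda_V$ that still enjoys the equal sign property with respect to $\Delta$ --- this is precisely the subtlety the present authors flag in their footnote on good liftings, and it is where the hypothesis that $X_\Delta$ is a good toric variety enters irreducibly. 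Your proposal correctly locates both issues but does not resolve them; as an outline of the argument of \cite{DCG2} it is accurate, as a self-contained proof it is incomplete at exactly those two points.
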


Although the polynomials $P_W^V(t)$ in \ref{item:P_wonderful} depend on the choice of a basis, the ideal generated by all the relation is independent from this choice, as shown in \cite[Proposition 6.3]{DCG2}.

\begin{remark}
Another possible choice of $P_W^V(t)$ consist of taking the polynomials:
\[t^{\cd L-\cd W} +  \prod_{i=1}^{\cd L-\cd W}  \sum_{j\in \mathcal{R}_\Delta} -\min(0, \langle \beta_i, j \rangle) c_j.\]
\end{remark}

Let $A$ be a nested set, $W$ be any element in $\mathcal{G}$, and $B \subseteq \mathcal{G}$ be such that each $L \in B$ is smaller than $W$ ($L \lneq W$ in $\LLL$).
We define the element $F(A,W,B)$ in $E$ by
\[ F(A,W,B) = P_W^V \left( \sum_{L \in \mathcal{G}_{\geq W}} -t_L \right) \prod_{L \in A} s_L \prod_{L \in B} t_L ,\]
where $V$ is the connected component of $\bigcap_{L \in A_{< W} \cup B} L$ containing $W$ (so $V\leq W$).

\begin{definition} \label{def:D}
Let $(\DD, \dd)$ be the differential graded algebra given by $E$ with relations:
\begin{enumerate}
    \item \label{item:first_def_M} $x_W y_j$ if $j \not \in \Ann \Lambda_W$, where $x_W= s_W$ or $t_W$ and $y_j=b_j$ or $c_j$,
    \item \label{item:second_def_M} $\prod_{W \in A} s_W \prod_{W \in B} t_W$ if $A\cup B$ is not a $\mathcal{G}$-nested set,
    \item \label{item:third_def_M} $\prod_{j \in A} b_j \prod_{j \in B} c_j$ if $A\cup B$ is not a cone in $\Delta$ (ie $A \cup B \in \mathcal{P}(\mathcal{R}_\Delta) \setminus \mathcal{C}_\Delta$),
    \item \label{item:four_def_M} $\sum_{j \in \mathcal{R}_\Delta} \langle \beta, j \rangle c_j$ for every $\chi \in \Lambda$ (or equivalently for $\chi$ in a fixed basis of $\Lambda$),
    \item \label{item:last_def_M} $F(A,W,B)$ for $A$ $\mathcal{G}$-nested set, $W \in \mathcal{G}$, and $B \subseteq \mathcal{G}$ be such that each $L \in B$ is smaller than $W$ (ie $B \subseteq \mathcal{G}_{< W}$),
\end{enumerate}
and differential $\dd$ defined on generators by $\dd(s_W)=t_W$, $\dd(b_j)=c_j$, and by $\dd(t_W)=\dd(c_j)=0$.
\end{definition}

\begin{lemma}
The ideal generated by \eqref{item:first_def_M}-\eqref{item:last_def_M} is stable with respect to $\dd$, so $(\DD,\dd)$ is a differential graded algebra.
\end{lemma}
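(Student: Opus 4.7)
The plan is to use that $\dd$ is defined as a graded derivation on $E$, so it suffices to verify $\dd(g)\in I$ for each generator $g$ of types \ref{item:first_def_M}--\ref{item:last_def_M}. I would handle the five types in turn.

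Relations \ref{item:first_def_M}, \ref{item:second_def_M}, \ref{item:third_def_M}, and \ref{item:four_def_M} are checked by direct inspection. The differential $\dd$ vanishes on \ref{item:four_def_M} since $\dd c_j=0$. For \ref{item:first_def_M} the differentials $\dd(s_Wb_j)=t_Wb_j-s_Wc_j$, $\dd(s_Wc_j)=t_Wc_j$, and $\dd(t_Wb_j)=t_Wc_j$ are again monomials of the form $x_Wy_j$ with $j\notin\Ann\Lambda_W$. For \ref{item:second_def_M} and \ref{item:third_def_M}, the differential replaces one odd generator ($s_L$ or $b_j$) by its even partner ($t_L$ or $c_j$), preserving the underlying index set $A\cup B$; each summand remains a relation of the same type.

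The main work is relation \ref{item:last_def_M}. Writing $F(A,W,B)=P\cdot\prod_{L\in A}s_L\prod_{L\in B}t_L$ with $P:=P_W^V\left(-\sum_{L\in\mathcal{G}_{\geq W}}t_L\right)$, the factor $P$ is a polynomial in the even generators $t_L$ and $c_j$, so $\dd P=0$ and
\[ \dd(F(A,W,B))=P\cdot\sum_{L_0\in A}\epsilon_{L_0}\Big(\prod_{L\in A\setminus\{L_0\}}s_L\Big)\,t_{L_0}\prod_{L\in B}t_L. \]
The key observation is that $V$ depends only on the set $A_{<W}\cup B$. I would split the sum according to the position of $L_0$ relative to $W$. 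If $L_0<W$, then $L_0\in A_{<W}$ and
\[ (A\setminus\{L_0\})_{<W}\cup(B\cup\{L_0\})=A_{<W}\cup B, \]
so $F(A\setminus\{L_0\},W,B\cup\{L_0\})$ uses the same $V$, hence the same $P$; the $L_0$-summand equals $\epsilon_{L_0}F(A\setminus\{L_0\},W,B\cup\{L_0\})$, a generator of type \ref{item:last_def_M}. If instead $L_0\not<W$ (i.e.\ $L_0=W$, $L_0>W$, or $L_0$ is incomparable with $W$), then $L_0\notin A_{<W}$, so $(A\setminus\{L_0\})_{<W}=A_{<W}$ and $F(A\setminus\{L_0\},W,B)$ is built with the same $P$; since $t_{L_0}$ has even total degree it commutes past every other factor, so the summand rewrites as $\pm\,t_{L_0}\cdot F(A\setminus\{L_0\},W,B)$, an algebra multiple of a type \ref{item:last_def_M} generator and hence in $I$.

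The delicate step here is precisely the bookkeeping just described: one must verify that removing $L_0$ from $A$ and either moving it to $B$ (when $L_0<W$) or simply discarding it (when $L_0\not<W$) leaves the intersection $\bigcap_{L\in A_{<W}\cup B}L$ unchanged, so that the polynomial $P$ in the newly produced $F$-relation matches the one appearing in $\dd(F(A,W,B))$. Once this is observed, no further relations are needed; the argument closes by the ideal property alone.
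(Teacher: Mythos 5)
Your argument is correct and is essentially the paper's own proof: the first four families of relations are handled by inspection, and for the fifth one establishes exactly the identity $\dd(F(A,W,B))=\sum_{L\in A_{<W}}\pm F(A\setminus\{L\},W,B\cup\{L\})+\sum_{L\in A,\,L\not<W}\pm t_L F(A\setminus\{L\},W,B)$. Your extra bookkeeping that $A_{<W}\cup B$, hence $V$ and the polynomial $P_W^V$, is unchanged in each summand is the right justification and is only left implicit in the paper.
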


\begin{proof}
It is obvious that the ideal generated by \eqref{item:first_def_M}-\eqref{item:four_def_M} is $\dd$-stable.
The relation 
\[ \dd(F(A,W,B)) = \sum_{L \in A_{<W}} \pm F(A\setminus \{L\}, W, B \cup \{L\}) +  \sum_{L \in A_{\not < W}} \pm t_L F(A\setminus \{L\}, W, B) \]
show that the ideal generated by \eqref{item:last_def_M} is $\dd$-stable.
\end{proof}

\medskip
Let $\MM$ be the Morgan algebra  associated to the pair $(Y(\Delta,\mathcal{G}), M_\A)$. The complement $Y(\Delta,\mathcal{G}) \setminus M_\A$ is a simple normal crossing divisor $\bigcup_{W \in \mathcal{G}} D_W \cup \bigcup_{j \in \mathcal{R}_{\Delta}} D_j $, whose irreducible component are indexed by $\mathcal{G} \sqcup \mathcal{R}_\Delta$.

For each $A \subseteq \mathcal{G} \sqcup \mathcal{R}_\Delta$ we denote with $Y_A$ the intersections of all divisors associated to $A$.
The graded differential algebra $\MM$ is the direct sum of vector spaces 
\[\bigoplus_{A \subset \mathcal{G} \sqcup \mathcal{R}_\Delta} H^\bigcdot (Y_A,\Q),\]
on which:
\begin{itemize}
 
\item the total degree of the elements in $H^p(Y_\A)$ is $\lvert A \rvert + p$;
\item the multiplication is induced by the restriction maps and the cup product 
\[H^p (Y_A) \otimes H^{p'} (Y_B) \to H^{p+p'} (Y_{A\cup B});\]
\item the differential is defined from the Gysin map $H^p(Y_A) \to H^{p+2}(Y_{A\setminus \{a\}})$.   
\end{itemize}
The cohomology of each stratum $Y_A$ is computed in \cite[Theorem 9.1]{DCG2} in terms of some generators $t_W, s_j \in H^2(Y_\A)$.

We define a morphism $\tilde{f}\colon E \to \MM$ on generators by
\begin{align*}
    & s_W \mapsto 1 \in H^0(D_W), && t_W \mapsto t_W \in H^2(Y(\Delta, \mathcal{G})), \\
    & b_j \mapsto 1 \in H^0(D_j), && c_j \mapsto c_j \in H^2(Y(\Delta, \mathcal{G})).
\end{align*}

\begin{lemma}
The map $\tilde{f}$ is a surjective morphisms of differential graded algebras.
\end{lemma}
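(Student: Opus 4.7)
The plan has three parts: verify that $\tilde{f}$ extends to a well-defined morphism of graded $\Q$-algebras, check compatibility with the differentials, and establish surjectivity.

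For the first part, since $E$ is the free graded commutative $\Q$-algebra on the given generators (with total degrees $1$ for $s_W, b_j$ and $2$ for $t_W, c_j$), extending $\tilde{f}$ to an algebra morphism reduces to checking that the four prescribed images land in the correct bi-degrees inside a graded commutative algebra. The target $\MM$ is graded commutative with respect to the total grading, by the standard sign convention on each $H^\bigcdot(Y_A)$, and the assignments $s_W \mapsto 1\in H^0(D_W)$, $t_W \mapsto t_W \in H^2(Y_\emptyset)$, $b_j \mapsto 1 \in H^0(D_j)$, $c_j \mapsto c_j \in H^2(Y_\emptyset)$ lie in bi-degrees $(0,1)$ and $(2,0)$ as required. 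Hence $\tilde{f}$ extends uniquely to a morphism of graded algebras.

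Compatibility with $\dd$ is a check on each class of generators. By construction of the Morgan differential on the $A = \{W\}$ stratum, $1 \in H^0(D_W)$ is sent to its Gysin pushforward along $D_W \hookrightarrow Y(\Delta, \mathcal{G})$, namely the fundamental class $[D_W] = t_W \in H^2(Y_\emptyset)$, so $\dd(\tilde{f}(s_W)) = t_W = \tilde{f}(\dd(s_W))$. The same argument with $D_j$ in place of $D_W$ gives $\dd(\tilde{f}(b_j)) = c_j$, matching $\tilde{f}(\dd(b_j))$. For the even-degree generators $t_W$ and $c_j$, their images sit in $H^\bigcdot(Y_\emptyset)$, where the Morgan differential vanishes (there is no index to remove from $\emptyset$); this matches $\dd(t_W) = \dd(c_j) = 0$ in $E$.

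For surjectivity, decompose $\MM = \bigoplus_A H^\bigcdot(Y_A)$. By \Cref{thm:DCG}, whenever $Y_A \neq \emptyset$ the algebra $H^\bigcdot(Y_A)$ is generated over $\Q$ by (the restrictions of) the $t_W$ and $c_j$ together with the unit $1_{Y_A}$. The restrictions of $t_W$ and $c_j$ are directly in the image of $\tilde{f}$, and the Morgan product of the images of $s_W$ for $W \in A \cap \mathcal{G}$ and $b_j$ for $j \in A \cap \mathcal{R}_\Delta$ equals $1_{Y_A}\in H^0(Y_A)$ by the very definition of the multiplication (iterated restriction followed by cup product of units). Multiplying by further images of $t_W$ and $c_j$ produces every element of $H^\bigcdot(Y_A)$, so $\tilde{f}$ surjects onto each summand.

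The point I expect to require the most care is the sign bookkeeping in the Morgan algebra: the anticommutativity of the odd-degree generators $s_W, b_j$ in $E$ must match the graded-commutative multiplication of the unit classes in the various $H^0(D_W)$ and $H^0(D_j)$ inside $\MM$. This is exactly what the standard Morgan sign conventions are designed to ensure, and once they are fixed consistently, each of the three verifications above reduces to a routine check on generators.
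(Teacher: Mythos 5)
Your proof is correct and follows essentially the same route as the paper's: surjectivity is obtained because the image contains the unit of each $H^0(Y_A)$ (as a product of the images of $s_W$ and $b_j$) together with all $t_W, c_j$, which generate by \Cref{thm:DCG}, and compatibility with the differential comes from the identities $t_W=(i_W)_*(1)$ and $c_j=(i_j)_*(1)$ for the Gysin maps. Your write-up is somewhat more explicit (e.g.\ noting that $\dd$ vanishes on the $A=\emptyset$ summand), but there is no substantive difference.
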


\begin{proof}
As shown in \cite[Theorem 9.1]{DCG2}, the restriction maps $H^\bigcdot(Y_A) \to H^\bigcdot(Y_B)$ for $A\subset B$ are surjective.
Since $\im \tilde{f}$ contains $H^\bigcdot(Y(\Delta,\mathcal{G}))$ and the elements $1 \in H^0(D)$ for all divisors $D$, the morphisms $\tilde{f}$ is surjective.
By construction of the cohomology algebra, the elements $t_W$ and $c_j$ of $H^2(Y(\Delta, \mathcal{G}))$ are $t_W = (i_W)_*(1)$ and $c_j = (i_j)_*(1)$, where $i_*$ is the Gysin morphism for the regular embedding $i\colon D \hookrightarrow Y(\Delta, \mathcal{G})$.
Therefore, $\tilde{f}$ is a morphism of differential graded algebras.
\end{proof}

The map $\tilde{f}$ factors trough $f \colon \DD \to \MM$, indeed we have the following theorem.

\begin{lemma} \label{thm:iso_Morgan}
The map $f$ is well defined and is an isomorphism.
\end{lemma}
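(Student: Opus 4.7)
The plan is to prove two things: (a) that $f$ is well defined, i.e.\ every relation \eqref{item:first_def_M}--\eqref{item:last_def_M} lies in $\ker \tilde{f}$; and (b) that the induced map $f\colon \DD \to \MM$ is bijective (surjectivity is inherited from $\tilde{f}$). For (a) I would match each defining relation with a statement about $\MM$ read off from Theorem \ref{thm:DCG}. Relations \eqref{item:first_def_M}, \eqref{item:second_def_M}, \eqref{item:third_def_M} each assert that a monomial $s_{A}b_{B}$ (times a polynomial in $t,c$) vanishes, which holds because the corresponding stratum $Y_{A\sqcup B}$ is empty whenever $A$ fails to be $\mathcal{G}$-nested, $B$ fails to be a cone in $\Delta$, or $B\not\subseteq \bigcap_{W\in A}\Ann\Lambda_W$. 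Relation \eqref{item:four_def_M} is the standard toric linear relation in $H^2(X_\Delta)$ and pulls back to $Y(\Delta,\mathcal{G})$, while relation \eqref{item:last_def_M} is precisely (W3a) of Theorem \ref{thm:DCG}, pushed onto the closed stratum $Y_A$ by the $s_A$ factor.

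For (b) I would argue by a sector decomposition. Since $s_W$ and $b_j$ have odd total degree while $t_W$ and $c_j$ are even, every element of $E$ has a unique normal form $\sum_{A,B} s_A b_B\, p_{A,B}(t,c)$ with $A\subseteq \mathcal{G}$, $B\subseteq \mathcal{R}_\Delta$ and $p_{A,B}\in \Q[t_W,c_j]$. Relations \eqref{item:first_def_M}--\eqref{item:third_def_M} force the sector $s_A b_B \cdot \DD^{A,B}$ of $\DD$ to vanish unless $(A,B)$ is \emph{admissible} in the sense of Theorem \ref{thm:DCG}, i.e.\ $A$ is $\mathcal{G}$-nested, $B\in\mathcal{C}_\Delta$, and $B\subseteq \bigcap_{W\in A}\Ann\Lambda_W$; this matches exactly the condition under which $Y_{A\sqcup B}\neq\emptyset$. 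For each admissible pair I would check that the induced relations on $p_{A,B}$ inside $\DD$ reproduce, term by term, the six families (T1)--(W3b) presenting $H^\bigcdot(Y_{A\sqcup B})$ in Theorem \ref{thm:DCG}. Then $f$ restricts to an isomorphism $s_A b_B \cdot \DD^{A,B} \xrightarrow{\sim} H^\bigcdot(Y_{A\sqcup B})$, and summing over admissible $(A,B)$ yields $\DD \cong \MM$ as graded vector spaces, hence as dg algebras since $f$ is already known to be a dg algebra morphism.

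The main obstacle will be this last matching: one must trace how each relation of $\DD$, multiplied by every $s_{A'}b_{B'}$, specialises within each admissible sector, and verify that the resulting ideal in $\Q[t_W,c_j]$ is neither too small nor too large. The correspondence is essentially mechanical: \eqref{item:first_def_M} yields (W2) directly and, after multiplication by $s_A$ or $b_B$, the annihilator halves of (W1) and (W3b); \eqref{item:second_def_M} and \eqref{item:third_def_M} supply the non-nested half of (W3b) and the non-cone relations (T1)/(W1); \eqref{item:four_def_M} is (T2); and \eqref{item:last_def_M} is (W3a). Conversely each relation of Theorem \ref{thm:DCG} has an obvious preimage in $\DD$, so the two ideals in each sector coincide and the isomorphism follows.
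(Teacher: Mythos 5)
Your proposal is correct and follows essentially the same route as the paper: well-definedness by matching each relation of $\DD$ with the vanishing statements (T1)--(W3b) of \Cref{thm:DCG}, and injectivity by exploiting that the ideal is monomial in the $s_W,b_j$ to reduce to the admissible sectors $(A,B)$ with $Y_{A\sqcup B}\neq\emptyset$, where the two ideals in the $t,c$-variables are shown to coincide. The paper packages your ``ideals coincide'' step as a well-defined map $H^\bigcdot(Y_{A\sqcup B})\to\DD$, $z\mapsto \prod_{W\in A}s_W\prod_{j\in B}b_j\,z$, whose composition with $f$ and the projection to $H^\bigcdot(Y_{A\sqcup B})$ is the identity, but this is the same argument.
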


\begin{proof}
We first check that \eqref{item:first_def_M}-\eqref{item:last_def_M} belong to $\ker \tilde{f}$:
\begin{enumerate}
    \item there are four cases to check:
\begin{itemize}
    \item $\tilde{f}(s_W b_j)$ is zero since $D_W$ and $D_j$ do not intersect for $j \not \in \Ann \Lambda_W$;
    \item $\tilde{f}(s_W c_j)$ is zero since $c_j =0$ in $H^\bigcdot (D_W) $ by \ref{item:c_wonderful};
    
    \item $\tilde{f}(t_W b_j)=t_W \in H^\bigcdot(D_j)$ is zero by \ref{item:t_wonderful};
    
    \item $t_W c_j=0$ in $H^\bigcdot(Y(\Delta,\mathcal{G}))$ by \ref{item:tc_wonderful}.
    
\end{itemize}

    \item we have $\tilde{f}(\prod_{W \in A} s_W \prod_{W \in B} t_W)= \prod_{W \in B} t_W=0$ in $H^\bigcdot (Y_A)$ by \ref{item:t_wonderful} since $A\cup B$ is not $\mathcal{G}$-nested.
    \item the element $\tilde{f}(\prod_{j \in A} b_j \prod_{j \in B} c_j)=\prod_{j \in B} c_j$ is zero in $H^\bigcdot(Y_A)$ by \ref{item:c_wonderful}.
    \item the vanishing of the linear relation follows from  \ref{item:linear_toric}.
    \item we have 
    \[\tilde{f}(F(A,W,B))= P_W^V \left( \sum_{L \in \mathcal{G}_{\geq W}} -t_L \right) \prod_{L \in B} t_L\] that is zero by \ref{item:P_wonderful}.
\end{enumerate}
We have proven that $f$ is well defined and surjective, since $\tilde{f}$ is.
Let $I$ be the ideal in $E$ generated by \eqref{item:first_def_M}-\eqref{item:last_def_M} and notice that $I$ is a monomial ideal in the variable $s_W$ and $b_j$ for $W \in \mathcal{G}$ and $j \in \mathcal{R}_\Delta$.
It is enough to prove that 
\[f\left(\prod_{W \in A} s_W \prod_{j\in B} b_j z\right)=0 \:\mbox{ implies }\: \prod_{W \in A} s_W \prod_{j\in B} b_j z=0\]
in $\DD$ for all subsets $A\subseteq \mathcal{G}$, $B\subseteq \mathcal{R}_\Delta$ and all polynomials $z$ in the variables $\{t_W\}_{W \in \mathcal{G}}$ and $\{c_j\}_{j \in \mathcal{R}_\Delta}$.

The monomials $\prod_{W \in A} s_W \prod_{j\in B}b_j$ with $A$ a non-nested belong to $I$ by \eqref{item:second_def_M}, the ones with $B$ not a cone belong to $I$ by \eqref{item:third_def_M}, and the ones with $B \not \subset \bigcap_{W \in A} \Ann \Lambda_W$ are in $I$ by \eqref{item:first_def_M}.

Now, let $A$ be a $\mathcal{G}$-nested set and $B\in \mathcal{C}_\Delta$ be a cone contained in $\bigcap_{W \in A} \Ann \Lambda_W$.
We define a map $H^\bigcdot(Y_{A\sqcup B}) \to \DD$ by using the presentation of \Cref{thm:DCG}: the morphism is defined by $z \mapsto \prod_{W \in A} s_W \prod_{j\in B} b_j z$ for all $z$ in the exterior algebra on generators $\{t_W\}_{W \in \mathcal{G}}$ and $\{c_j\}_{j \in \mathcal{R}_\Delta}$.
It is well defined:
\begin{enumerate} 
    \item[(T1)] holds by relation \eqref{item:third_def_M},
    \item[(T2)] holds by relation \eqref{item:four_def_M},
    \item[(W1)] holds by relations \eqref{item:first_def_M} and \eqref{item:third_def_M},
    \item[(W3a)] holds by relation \eqref{item:last_def_M},
    \item[(W3b)] holds by relations \eqref{item:second_def_M} and \eqref{item:first_def_M}.
\end{enumerate}
The composition
\[ H^\bigcdot(Y_{A\sqcup B}) \to \DD \to \MM \twoheadrightarrow H^\bigcdot(Y_{A\sqcup B})\]
is the identity, therefore if $f(\prod_{W \in A} s_W \prod_{j\in B} b_j z)=0$ then $z=0$ in $H^\bigcdot (Y_{A\cup B})$ and $\prod_{W \in A} s_W \prod_{j\in B} b_j z=0$ in $\DD$.
\end{proof}

\Cref{thm:iso_Morgan} togheter with the main result of \cite{Morgan78} imply the following result.

\begin{theorem}\label{dga-iso}
The differential graded algebra $(\DD, \dd)$ built in \Cref{def:D} is a model for the complement $M_\A$. %,
%i.e.,
Therefore,
$H^\bigcdot (\DD, \dd) \cong H^\bigcdot(M_A;\Q)$. \hfill \qedsymbol
\end{theorem}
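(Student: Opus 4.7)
The plan is to deduce the theorem as an immediate consequence of two results already at our disposal: the isomorphism $f \colon \DD \to \MM$ of differential graded algebras established in \Cref{thm:iso_Morgan}, and the main theorem of \cite{Morgan78} on rational homotopy models for open complements of simple normal crossing divisors in smooth projective varieties.

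First, I would verify that the pair $(Y(\Delta, \mathcal{G}), M_\A)$ satisfies Morgan's hypotheses. By the construction of the wonderful model recalled above (following \cite{DCG2}), $Y(\Delta, \mathcal{G})$ is smooth and projective, and $M_\A$ sits inside it as the complement of the simple normal crossing divisor $\bigcup_{W \in \mathcal{G}} D_W \cup \bigcup_{j \in \mathcal{R}_\Delta} D_j$. Hence Morgan's theorem applies and identifies $\MM$ as a differential graded model for the rational homotopy type of $M_\A$; in particular it gives a canonical isomorphism $H^\bigcdot(\MM) \cong H^\bigcdot(M_\A; \Q)$.

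Next, I would invoke \Cref{thm:iso_Morgan} to transport this model structure along the isomorphism $f \colon \DD \to \MM$ of DGAs. Composing the two isomorphisms yields
\[H^\bigcdot(\DD, \dd) \;\cong\; H^\bigcdot(\MM) \;\cong\; H^\bigcdot(M_\A; \Q),\]
and at the same time identifies $(\DD, \dd)$ itself as a model for $M_\A$, so that it also encodes the rational homotopy type of the complement.

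The genuine work has already been carried out in \Cref{thm:iso_Morgan}, where the purely combinatorial presentation of $\DD$ in \Cref{def:D} is matched against the geometric Morgan algebra $\MM$ using the cohomology computations of \cite[Theorem 9.1]{DCG2}. At this final stage there is no substantial obstacle: the theorem is a short formal consequence, and the only thing one needs to verify is that the hypotheses of Morgan's theorem are indeed met by $(Y(\Delta, \mathcal{G}), M_\A)$, which is built into the construction of the wonderful model.
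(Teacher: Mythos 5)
Your proposal is correct and is exactly the argument the paper uses: Theorem \ref{dga-iso} is stated as an immediate consequence of the isomorphism $f\colon \DD \to \MM$ from \Cref{thm:iso_Morgan} combined with Morgan's theorem applied to the pair $(Y(\Delta,\mathcal{G}), M_\A)$, whose hypotheses hold by the construction of the wonderful model. No further comment is needed.
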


\section{Divisorial case}
In this section we consider arrangements of subtori of codimension 1, usually known in the literature as \emph{toric arrangements}. Given such an arrangement  $\mathcal A=\{S_1,\dots,S_n\}$ we consider the toric wonderful model $Y(\Delta,\mathcal{G})$ where $\mathcal{G} = \LLL_{>\hat{0}}$ is the maximal building set.
In this case, the $\mathcal{G}$-nested subsets coincide with the chains in $\LLL_{>\hat{0}}$.

Inspired by Yuzvinsky \cite{Yu02,Yu99}, we introduce a different set of generators $\sigma_W, \tau_W$ for the d.g.a.\ $\DD$ and we determine the relations between them (\Cref{lemma:prop_base}).
By using this generators we define some elements $\Xi_{W,A}$ of $\D$ (\Cref{def:Xi}) that belongs to the kernel of $\dd$ (\Cref{lemma:in_ker}).
We study the multiplication between them in \Cref{lemma:multiplication} and their relation with the cohomology of the ambient torus (\Cref{lemma:facile}).
The linear relation between $\Xi_{W,A}$ are rather complicated to prove (\Cref{lemma:beta_b,cor:beta_cones,lemma:cones,lemma:circuit}).
In the main result of this section, \Cref{thm:main_div}, we introduce a Orlik-Solomon type algebra $R$ and we prove that the composition
\[ R \to H(\DD,\dd) \cong H(M(\mathcal{A});\Q) \]
is an isomorphism.
The map $R \to H(\DD,\dd)$ is well defined by all the Lemmas preceding the main Theorem, is injective by \Cref{lemma:monomial_base,lemma:nbc_basis,lemma:increasing_flag}, and surjective by dimensional argument (\Cref{lemma:nbc_basis}).

Although the d.g.a.\ $\DD$ depends on the choice of a good fan $\Delta$, the algebra $R$ and its isomorphic image in $\DD$ are independent from the choice of the fan.

In this section we will use basic notions of matroid theory such as those of \emph{independent set},\emph{circuit}, \emph{no broken circuit}, that can be found for instance in the paper \cite{OS80}.

Define the elements $\sigma_W = \sum_{L\geq W} s_L$ and $\tau_W = \sum_{L\geq W} t_L$ in $\DD$ for all $W \in \LLL_{\geq \hat{0}}$.
Moreover, for every $\chi \in \Lambda$ define \[\beta_\chi^-
= -\sum_{j \in \mathcal{R}_\Delta} \min(0, \langle \chi, j\rangle) b_j, \qquad \beta_\chi^+
= \sum_{j \in \mathcal{R}_\Delta} \max(0, \langle \chi, j\rangle) b_j\] and \[\beta = \beta^+ - \beta^-, \qquad \gamma_\chi^- = -
\sum_{j \in \mathcal{R}_\Delta} \min(0, \langle \chi, j\rangle) c_j.\]

As in the previous Section, we consider the bi-gradation of $\DD$ given by $\deg(s_W)=\deg(b_j)=(0,1)$ and $\deg(t_W)=\deg(c_j)=(2,0)$, so that the differential $\dd$ has bi-degree $(2,-1)$.

\begin{lemma}\label{lemma:monomial_base}
The set $\{\prod_{W \in A} s_W \prod_{j \in C} b_j\}_{A,C}$, where $A$ runs over all the $\mathcal{G}$-nested sets and $C \in \mathcal{C}_{\Delta}$ over all the cones contained in $\cap_{W \in A} \Ann \Lambda_W$, is a linear basis of $\DD^{0,\bigcdot}$.

Moreover, the set $\{\prod_{W \in A} \sigma_W \prod_{j \in C} b_j\}_{A,C}$ (where $A$ and $C$ runs over the range described above) is a linear basis of $\DD^{0,\bigcdot}$.
\end{lemma}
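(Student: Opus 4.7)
The plan is first to identify explicitly the relations of $\DD$ that land in bi-degree $(0,\bullet)$ and obtain the $s_W$-basis, then pass to the $\sigma_W$-basis by a triangular change of variables.

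First I would observe that every generator of $\DD$ has non-negative first bi-degree and every defining relation is bi-homogeneous. Hence the intersection of the defining ideal with $E^{0,\bullet}$ is generated by those pieces of the relations of \Cref{def:D} that already lie in $E^{0,\bullet}$. Scanning the list: (1) gives $s_W b_j$ for $j\notin\Ann\Lambda_W$; (2) with $B=\emptyset$ gives $\prod_{W\in A} s_W$ for $A$ not $\mathcal{G}$-nested; (3) with $B=\emptyset$ gives $\prod_{j\in A} b_j$ for $A$ not a cone; (4) is pure of bi-degree $(2,0)$; and $F(A,W,B)$ in (5) has first bi-degree $2(\cd W-\cd V)+2|B|$, which vanishes only when $B=\emptyset$ and $V=W$. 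The key local observation is that, in the divisorial case, $\mathcal{G}$-nested sets are chains, so $A_{<W}$ is either empty (giving $V=T$) or a subchain whose intersection is a connected subtorus $L_\star\supsetneq W$ (giving $V=L_\star$); in both cases $\cd V<\cd W$, so (5) contributes no relation in bi-degree $(0,\bullet)$. Thus $\DD^{0,\bullet}$ equals the bi-graded piece of the exterior algebra on $\{s_W\}_W$ and $\{b_j\}_j$ modulo the three explicit relations above, and the claimed monomials $\prod_{W\in A} s_W\prod_{j\in C} b_j$ span $\DD^{0,\bullet}$.

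For linear independence I would apply the isomorphism $f\colon\DD\to\MM$ of \Cref{thm:iso_Morgan}. By construction $f$ sends each such monomial to $1\in H^0(Y_{A\sqcup C})$, a non-zero class in the direct summand $H^0(Y_{A\sqcup C})$ of $\MM^{0,|A|+|C|}$. Since distinct admissible pairs $(A,C)$ land in distinct summands and $f$ is an isomorphism, the monomials are linearly independent in $\DD$, completing the first half of the statement.

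For the second basis I would perform a triangular change of variables. Expanding $\sigma_W=\sum_{L\geq W} s_L$ yields
\[\prod_{W\in A}\sigma_W\prod_{j\in C} b_j=\sum_{\varphi}\prod_{W\in A} s_{\varphi(W)}\prod_{j\in C} b_j,\]
summed over maps $\varphi\colon A\to\mathcal{G}$ with $\varphi(W)\geq W$ for all $W\in A$. The identity map yields $\prod_A s_W\prod_C b_j$ with coefficient $+1$. Any other surviving monomial has $\varphi(W_0)>W_0$ for some $W_0\in A$, and its image $A':=\varphi(A)$ (which must be $\mathcal{G}$-nested of cardinality $|A|$, else the monomial vanishes by (2)) then satisfies $r(A'):=\sum_{W\in A'}\cd W>r(A)$. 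Ordering the basis pairs $(A,C)$ of the first part by $r(A)$ and extending arbitrarily to a total order, the transition matrix is upper-triangular with $1$'s on the diagonal, hence invertible, so $\{\prod_{W\in A}\sigma_W\prod_{j\in C} b_j\}_{A,C}$ is also a basis of $\DD^{0,\bullet}$. The main obstacle is the careful treatment of (5): an $F(A,W,B)$ with $V=W$ would \emph{a priori} impose extra relations on $\prod_{L\in A} s_L$ in bi-degree $(0,\bullet)$, and the chain structure of $\mathcal{G}$-nested sets for the maximal building set is precisely what rules this out.
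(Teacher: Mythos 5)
Your proof is correct and follows the same overall strategy as the paper: identify $\DD^{0,\bigcdot}$ as the exterior algebra on the $s_W$ and $b_j$ modulo the three monomial relations, read off the monomial basis, and pass to the $\sigma_W$'s by a unitriangular change of basis (the paper orders nested sets lexicographically via a linear extension of the order on $\mathcal{G}$; your ordering by $\sum_{W\in A}\cd W$ works equally well, since $\varphi(W)\geq W$ with some strict inequality forces the codimension sum to increase). You differ in two sub-steps, both defensible. First, the paper simply asserts the presentation of $\DD^{0,\bigcdot}$, whereas you check that relation (5) contributes nothing in bi-degree $(0,\bigcdot)$: since $\mathcal{G}$-nested sets are chains for the maximal building set, $A_{<W}$ has connected intersection strictly containing $W$, so $V\lneq W$ and $F(A,W,B)$ always has positive first degree. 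This is a genuine point the paper leaves implicit, and your argument correctly uses the divisorial hypothesis in force in this section. Second, for linear independence the paper uses the monomial-ideal argument (a monomial lies in the ideal iff it is divisible by one of the generators), whereas you map the monomials into the Morgan algebra via $f$ and observe that they land in distinct direct summands $H^0(Y_{A\sqcup C})$, nonzero exactly when $(A,C)$ is admissible by \Cref{thm:DCG}; this is legitimate since \Cref{thm:iso_Morgan} precedes the present lemma, and it only needs $f$ to be well defined, not injective.
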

\begin{proof}
Notice that $\DD^{0,\bigcdot}$ is the exterior algebra on generators $s_W$ and $b_j$ with relations:
\begin{enumerate}[label=(\arabic*')]
\item $s_Wb_j$ if $j \not \in \Ann \Lambda_W$,
\item $\prod_{W \in A} s_W$ if $A$ is not $\mathcal{G}$-nested,
\item $\prod_{j \in C} b_j$ if $A$ is not a cone in $\Delta$. 
\end{enumerate}
These relations generate a monomial ideal, so can be easily seen that these element divide a monomial if and only if it is not in the first basis.

For the second basis, we choose a total order on the set $\mathcal{G}$ that refines the partial order on it.
This total order induces a lexicographical order on the set of $\mathcal{G}$-nested sets.
The matrix that represents the elements $\{\prod_{W \in A} \sigma_W \prod_{j \in C} b_j\}_{A,C}$ in the basis $\{\prod_{W \in A} s_W \prod_{j \in C} b_j\}_{A,C}$ is upper triangular with ones on the diagonal entries.
This proves the claim.
\end{proof}

\begin{lemma} \label{lemma:prop_base}
In $\DD$ we have the following relations:
\begin{enumerate}
\item $\sigma_W \sigma_L = (\sigma_W-\sigma_L)\left( \sum_{V \in W \vee L} \sigma_V \right)$ for all $W,L\in \LLL$,
\item if $\chi \in \Lambda_V$ then $x_\chi y_V=0$ where $x=\beta, \beta^-, \beta^+$ or $\gamma^-$ and $y= \sigma$ or $\tau$,
\item if $W \gtrdot V$ and $\chi \in \Lambda_W$ is an element that generates $\Lambda_W/\Lambda_V$, then:
\[
    \sigma_V (\tau_W+ \gamma^-_\chi)=\sigma_W \tau_W, \qquad
    \tau_V (\tau_W+ \gamma^-_\chi)=\tau_W^2.
\]
%\item if $W \gtrdot V$ then $\tau_V P_W^V(\tau_W)=\tau_W^2$.
\end{enumerate}
\end{lemma}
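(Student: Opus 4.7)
The plan is to prove the three identities in sequence, as they are of rather different nature.

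For (1), I would directly expand $\sigma_W\sigma_L = \sum_{A\geq W,\,B\geq L} s_A s_B$. Since we are in the maximal-building-set case, $\{A,B\}$ is $\mathcal{G}$-nested iff $A$ and $B$ are comparable in $\LLL$, and relation \eqref{item:second_def_M} of $\DD$ annihilates all other pairs. Splitting the surviving terms according to whether $A\leq B$ or $A\geq B$, in each case the smaller layer must lie below a uniquely determined connected component $V\in W\vee L$. Using the anticommutativity of the odd-degree generators $s_\bigcdot$ to rewrite one of the two sub-sums with the indices swapped (which contributes the expected sign), the total collapses to $\sum_{V\in W\vee L}(\sigma_W\sigma_V-\sigma_L\sigma_V)$, which is the right-hand side.

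Identity (2) follows at once from relation \eqref{item:first_def_M} of $\DD$. If $\chi\in\Lambda_V$ and $L\geq V$, then $L\subseteq V$ as subtori and therefore $\Lambda_V\subseteq\Lambda_L$, so $\chi\in\Lambda_L$. Every ray $j$ with nonzero coefficient in $\beta_\chi$, $\beta_\chi^\pm$ or $\gamma_\chi^-$ satisfies $\langle\chi,j\rangle\neq 0$, hence $j\notin\Ann\chi\supseteq\Ann\Lambda_L$, giving $j\notin\Ann\Lambda_L$. Relation \eqref{item:first_def_M} then forces $s_L y_j = t_L y_j = 0$, and summing over $L\geq V$ produces $x_\chi y_V = 0$.

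For (3), the strategy is to reduce both equalities to relation \eqref{item:last_def_M} of $\DD$, using (2) as the principal cancellation tool. Under the covering hypothesis $W\gtrdot V$ the polynomial $P_W^V$ is linear in $t$, so the relations $F(\emptyset,W,\{V\})=0$ and $F(\{V\},W,\emptyset)=0$ already yield basic equalities between $t_V$, $s_V$, $\tau_W$ and $\gamma_\chi^-$. Since $\chi\in\Lambda_W$, item (2) gives $\tau_W\gamma_\chi^-=\sigma_W\gamma_\chi^-=0$, and writing $\tau_V=\tau_W+\sum_{L\geq V,\,L\not\geq W}t_L$ (similarly for $\sigma_V$) both identities reduce to showing that $t_L(\tau_W+\gamma_\chi^-)=0$ and $s_L(\tau_W+\gamma_\chi^-)=0$ for every layer $L$ with $L\geq V$ and $L\not\geq W$. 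The case $L=V$ is the basic relation just obtained. For the other $L$, the incomparability of $L$ with $W$ together with relation \eqref{item:second_def_M} reduces $t_L\tau_W$ to $\sum_{U\in L\vee W}t_L\tau_U$, where each $U$ dominates both $L$ and $W$ in $\LLL$; iterating relation \eqref{item:last_def_M} applied to $F(\emptyset,U,\{L\})$ along a saturated chain from $L$ up to such a $U$, together with further applications of (2), matches each $t_L\tau_U$ with the contribution of $t_L\gamma_\chi^-$.

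The main obstacle I anticipate is this last reduction: when $U$ does not cover $L$ the polynomial $P_U^L$ is no longer linear, so the cancellation requires a careful induction on the length of a saturated chain from $L$ to each $U\in L\vee W$ and a uniform accounting of signs and of the ray data defining the $P$-polynomials. The $\sigma$-identity then follows from the same argument after replacing every $t_L$ by $s_L$ throughout.
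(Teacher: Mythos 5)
Your treatments of items (1) and (2) are correct and essentially the paper's own arguments: for (1) the paper writes $\sigma_W\sigma_L=(x_1+x_3)(x_2+x_3)$ with $x_3=\sum_{V\in W\vee L}\sigma_V$, kills $x_1x_2$ by non-nestedness and $x_3^2$ by anticommutativity, which is exactly the regrouping you describe; (2) is the one-line consequence of relation \eqref{item:first_def_M} in both versions. The problem is in item (3).

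Your reduction of (3) to the vanishing of $s_L(\tau_W+\gamma^-_\chi)$ and $t_L(\tau_W+\gamma^-_\chi)$ for $L\geq V$, $L\not\geq W$ is the right skeleton, and the expansion $t_L\tau_W=\sum_{U\in L\vee W}t_L\tau_U$ is correct. Two things go wrong afterwards. First, the obstacle you anticipate — non-linear polynomials $P_U^L$ forcing an induction along saturated chains — never arises: since $W\gtrdot V$ and $V\leq L$, every $U\in L\vee W$ satisfies $\cd U\leq \cd L+(\cd W-\cd V)=\cd L+1$, hence $U\gtrdot L$ and $F(\{L\},U,\emptyset)$, $F(\emptyset,U,\{L\})$ always consist of a single linear factor. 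Second, and this is the genuine gap, the assertion that each $t_L\tau_U$ "matches the contribution of $t_L\gamma^-_\chi$" is precisely the point requiring proof, and neither relation \eqref{item:last_def_M} nor item (2) supplies it: relation \eqref{item:last_def_M} yields $t_L(\tau_U+\gamma^-_\eta)=0$ where $\eta$ generates $\Lambda_U/\Lambda_L$ — \emph{not} the given $\chi$ generating $\Lambda_W/\Lambda_V$ — and item (2) is inapplicable to $t_L\gamma^-_\chi$ because $\chi\notin\Lambda_L$. The missing step is the arithmetic identity $\gamma^-_\chi t_L=\lvert L\vee W\rvert\,\gamma^-_\eta t_L$: one writes $\chi=a\eta+\eta'$ with $\eta'\in\Lambda_L$ and $a=\lvert L\vee W\rvert$, observes that $c_jt_L=0$ for $j\notin\Ann\Lambda_L$ by relation \eqref{item:first_def_M}, and that for $j\in\Ann\Lambda_L$ one has $\min(0,\langle\chi,j\rangle)=a\min(0,\langle\eta,j\rangle)$ (note $\min$ is not linear, so the congruence alone is not enough — you also need that only rays annihilating $\Lambda_L$ survive). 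Only then does $t_L(\tau_W+\gamma^-_\chi)=\sum_{U\in L\vee W}t_L(\tau_U+\gamma^-_\eta)$ hold, with each summand killed by relation \eqref{item:last_def_M}. This multiplicity count — the number of connected components of $L\cap W$ — is where the arithmetic of the arrangement enters the lemma, and it cannot be recovered by the chain induction you propose.
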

\begin{proof}$ $
\begin{enumerate} 
\item Let $x_1=\sum_{\substack{V\geq W \\ V \not \geq L}}s_V$, $x_2=\sum_{\substack{V\geq L \\ V \not \geq W}}s_V$ and $x_3=\sum_{\substack{V\geq W \\ V \geq L}}s_V$.
The claimed equality can be rewritten as $(x_1+x_3)(x_2+x_3) = (x_1-x_2) x_3$.
Since $x_3$ has degree one we have $x_3^2=0$ and we need to prove that $x_1x_2=0$.
This follows from $x_1x_2 = \sum s_V s_U$ where the sum runs over all $V\geq W$, $V\not \geq L$ and $U\geq W$, $U \not \geq L$:
we have $s_V s_U=0$ because $V$ and $U$ do not form a chain.
\item Notice that for $\chi \in \Lambda_W$ we have $\min(0, \langle \chi, j \rangle) a_j r_W=0$ for $a=b$ or $a=c$ and $r=s$ or $r=t$, by Relation \eqref{item:first_def_M} of \Cref{def:D}.
Since $W\geq V$ implies $\Lambda_W \supseteq \Lambda_V$, we have 
\[x_\chi y_V= \sum_{\substack{W \geq V \\ j \in \mathcal{R}_\Delta}}\min(0, \langle \chi, j \rangle) a_j r_W =0.\]
\item %Let $\chi \in \Lambda_W$ be an element that generates $\Lambda_W/\Lambda_V$.
If $L\geq W$, we have that $s_L(\tau_W+\gamma_\chi)=s_L \tau_W$ since $\chi \in \Lambda_L$. On the other hand, if $L\geq V$, $L\not \geq W$, we will show that
$s_L (\tau_W+ \gamma^-_\chi)=0$.
Indeed, we have 
\[s_L(\tau_W+\gamma_\chi^-) = \left( \sum_{U \in L \vee W} s_L \tau_U \right) + s_L \gamma_\chi^-.\]
Let $\eta \in \Lambda_U$ be an element that generates $\Lambda_U/\Lambda_L$ and notice that $\chi = a \eta + \eta'$ with $\eta' \in \Lambda_L$ and $a= \lvert L\vee W \rvert$.
Observe that
\begin{align*}
\gamma_\chi^- s_L &= \sum_{j \in \mathcal{R}_\Delta} -\min(0, \langle \chi,j \rangle) c_js_L \\
&= \sum_{\substack{j \in \mathcal{R}_\Delta \\ j \in \Ann \Lambda_L}} -\min(0, \langle a \eta + \eta',j \rangle) c_js_L \\
&= \sum_{\substack{j \in \mathcal{R}_\Delta \\ j \in \Ann \Lambda_L}} - a\min(0, \langle \eta,j \rangle) c_js_L \\
&= \sum_{j \in \mathcal{R}_\Delta } - a\min(0, \langle \eta,j \rangle) c_js_L =a \gamma_\eta^- s_L    
\end{align*}
and so \[s_L (\tau_W+ \gamma^-_\chi)= \sum_{U \in L \vee W} s_L (\tau_U+\gamma^-_\eta) = \sum_{U \in L \vee W} F(\{ L \},U, \emptyset)=0\] by Relation \eqref{item:last_def_M} of \Cref{def:D}.
The proof of $\tau_V (\tau_W+ \gamma^-_\chi)=\tau_W^2$ is analogous.
\qedhere
\end{enumerate}
\end{proof}

Let $A\subseteq \{1,\dots,n\}$ be an independent set and $W$ a connected component of $\cap_{a \in A} S_a$. Following \cite{MoT}, we denote by $m(A)$ the number of connected components in such intersection.
A \textit{flag} adapted to $A$ and $W$ is a list $\mathcal{F} = (a_1, a_2, \dots, a_k)$ of distinct elements of $A$; $m(\mathcal{F})$ is defined accordingly.
The list $\mathcal{F}$ determines a flag in the usual sense by setting $F_0=T$ and $F_{i}$ being the unique connected component of $S_{a_i} \cap F_{i-1}$ containing $W$.
Then $F_i$ is a flat $\forall i = 0, \dots, k$ and $F_{i} \lessdot F_{i+1}$.
Viceversa, every maximal flag $(F_0=T \lessdot F_1 \dots \lessdot F_k)$ between $T$ and $F_k$ with $F_k\leq W$ determines a unique flag $\mathcal{F} = (a_1, a_2, \dots, a_k)$ adapted to $A$ and $W$.

For every $i\in \{1,\dots,n\}$ we choose a character $\chi_i $ that generates $\Lambda_{S_i}$.
For each flag $\mathcal{F}$ and each $a\in A$, we define the elements $x(\mathcal{F},a)=\sigma_{F_i}$ if $a=a_i$ and $x(\mathcal{F},a)= \beta_{\chi_a}^-$ otherwise.
Analogously we set $y(\mathcal{F},a)=\tau_{F_i}$ if $a=a_i$ and $y(\mathcal{F},a)= \gamma_{\chi_a}^-$, otherwise.

\begin{definition} \label{def:Xi}
For each independent set $A \subseteq \{1,\dots,n\}$ and each connected component $W$ of $\cap_{a \in A} S_a$ we define the following element of $\DD$:
\[ \Xi_{W,A} = \sum_{\mathcal{F}} 
%(-1)^{l(\mathcal{F})}
 \frac{m(\mathcal{F})}{m(A)} \prod_{a\in A} x(\mathcal{F},a),\]
where the sum is taken over all the flags adapted to $A$ and $W$.
% and $l(F)$ is the sign of the permutation that reorders $\mathcal{F}$ with respect to the natural order of $\{1,\dots, n\}$.
\end{definition}

In order to simplify the notations, in the definition above we denoted by $\prod_{a\in A}$ the exterior product taken in the order of $A\subseteq \{1,\dots,n\}$. The same notation will be used from now on.

\begin{lemma}\label{lemma:in_ker}
For each independent set $A \subseteq \{1,\dots,n\}$ and each connected component $W$ of $\cap_{a \in A} S_a$ we have $\dd(\Xi_{W,A})=0$.
\end{lemma}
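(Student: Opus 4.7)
The plan is to apply the graded Leibniz rule to $\Xi_{W,A}$ and then collect the resulting terms into pairs that each vanish in $\DD$ by virtue of \Cref{lemma:prop_base}(3). Since $\dd(\sigma_V)=\tau_V$ and $\dd(\beta_\chi^-)=\gamma_\chi^-$, expanding $\dd\bigl(\prod_{a\in A}x(\mathcal{F},a)\bigr)$ for each adapted flag $\mathcal{F}$ produces summands of two sorts: \emph{type~(a)}, in which some $\sigma_{F_i}$ attached to $a_i\in\mathcal{F}$ is replaced by $\tau_{F_i}$, and \emph{type~(b)}, in which some $\beta_{\chi_a}^-$ attached to a missing element $a\in A\setminus\mathcal{F}$ is replaced by $\gamma_{\chi_a}^-$.

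The key move is to pair, for every partial flag $\mathcal{F}'=(a_1,\dots,a_{j-1})$ adapted to $A$ and $W$ and every $a\in A\setminus\mathcal{F}'$, the type-(a) contribution obtained by differentiating the last position of the extended flag $\mathcal{F}=(\mathcal{F}',a)$ with the type-(b) contribution coming from $\mathcal{F}'$ at the missing element $a$. The other $\sigma$- and $\beta^-$-factors coincide on the two summands, so the paired term has the form $\sigma_{F_{j-1}}\bigl(c_1\,\tau_{F_j}+c_2\,\gamma_{\chi_a}^-\bigr)\cdot(\text{common monomial})$, where $F_j$ is the component of $F_{j-1}\cap S_a$ containing $W$. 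Writing $\chi_a\equiv\mu\eta\pmod{\Lambda_{F_{j-1}}}$ with $\eta$ generating $\Lambda_{F_j}/\Lambda_{F_{j-1}}$ and $\mu=\lvert F_{j-1}\vee S_a\rvert$, the identity $\sigma_{F_{j-1}}(\tau_{F_j}+\gamma_\eta^-)=\sigma_{F_j}\tau_{F_j}$ of \Cref{lemma:prop_base}(3), combined with $\sigma_{F_{j-1}}\gamma_{\chi_a}^-=\mu\,\sigma_{F_{j-1}}\gamma_\eta^-$ via \Cref{lemma:prop_base}(2), collapses the bracket to a multiple of $\sigma_{F_j}\tau_{F_j}$.

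This last element vanishes in $\DD$: the relation $F(\{L\},F_j,\emptyset)$ of \Cref{def:D}, simplified via the killing $\gamma_\eta^- s_L=0$ for $\eta\in\Lambda_L$, gives $s_L\tau_{F_j}=0$ for every $L\geq F_j$, whence $\sigma_{F_j}\tau_{F_j}=0$. Consequently the pair dies provided the coefficient identity $c_1=\mu\,c_2$ holds, i.e.\ provided $m(\mathcal{F})/m(\mathcal{F}')=\mu$ --- this is precisely the arithmetic-matroidal content of the weights $m(\cdot)$ in the definition of $\Xi_{W,A}$. Type-(a) summands coming from differentiating an interior position $i<\lvert\mathcal{F}\rvert$ are handled by the same pairing applied to the initial sub-flag $(a_1,\dots,a_i)$; the symmetry of the sum over all orderings of $A$ ensures that every interior derivation is matched with exactly one type-(b) partner on an appropriately truncated flag.

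The main obstacle is the combinatorial and sign bookkeeping: one has to verify the identity $m(\mathcal{F})/m(\mathcal{F}')=\lvert F_{j-1}\vee S_{a_j}\rvert$ (a standard fact about sublattice indices and connected components of intersections of subtori in a torus), track the Leibniz signs coming from the odd total degree of $s_L$ and $b_j$, and make sure the pairing between type-(a) and type-(b) really exhausts every term of $\dd(\Xi_{W,A})$. Once these are in place, every paired contribution cancels and the lemma follows.
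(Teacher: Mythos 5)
Your opening moves match the paper's: expand by the Leibniz rule, and pair the derivative at the \emph{last} position of a flag $\mathcal{F}=(\mathcal{F}',a)$ with the type-(b) derivative of the truncated flag $\mathcal{F}'$ at $a$, collapsing the bracket to $\sigma_{F_j}\tau_{F_j}$ via \Cref{lemma:prop_base}; your multiplicity identity $m(\mathcal{F})/m(\mathcal{F}')=\lvert F_{j-1}\vee S_a\rvert$ is also the right bookkeeping. The argument breaks, however, at the step where you declare $\sigma_{F_j}\tau_{F_j}=0$. This is false whenever $\cd F_j\geq 2$: the relation $F(\{L\},F_j,\emptyset)$ equals $P_{F_j}^{T}(-\tau_{F_j})\,s_L$ with $P_{F_j}^{T}$ of degree $\cd F_j$, so after killing the $\gamma^-$-factors it yields $\tau_{F_j}^{\cd F_j}s_L=0$, not $\tau_{F_j}s_L=0$. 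Concretely, for two hypertori in $(\C^*)^2$ meeting in a point $P$, the image of $s_Pt_P=\sigma_P\tau_P$ in the Morgan algebra is $t_P\vert_{D_P}$, the self-intersection class of the exceptional divisor $D_P\cong\mathbb{P}^1$, which is nonzero; since $\DD\cong\MM$ by \Cref{thm:iso_Morgan}, $\sigma_P\tau_P\neq0$ in $\DD$. In the paper these collapsed terms $\pm\,m(\mathcal{F})\,\sigma_{F_1}\cdots\sigma_{F_{k-2}}\sigma_{F_k}\tau_{F_k}\prod\beta^-_{\chi_a}$ do survive, and they are only killed by a second cancellation mechanism that is entirely absent from your argument: the flag obtained by transposing the two adjacent elements $a_{k-1},a_k$ produces the identical monomial with opposite sign.

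Your handling of interior positions also cannot work as stated. A type-(b) term attached to a flag of length $m$ has $\sigma$-factors indexed by layers of codimensions $1,\dots,m$, whereas the type-(a) term from an interior position $i<k$ of a length-$k$ flag has $\sigma$-factors of codimensions $\{1,\dots,k\}\setminus\{i\}$ together with $\tau_{F_i}$; no truncated flag produces a matching partner, so these summands cannot be paired with type-(b) terms. What is actually needed is: (i) the derivative at the first position of a flag of length $>1$ vanishes outright, since $\tau_{F_1}\sigma_{F_2}=(\tau_{F_1}+\gamma^-_{\chi_{a_1}})\sigma_{F_2}$ is a multiple of $F(\emptyset,F_1,\emptyset)\sigma_{F_2}=0$; (ii) an interior term is rewritten \emph{by itself} using $\sigma_{F_{i-1}}\tau_{F_i}\sigma_{F_{i+1}}=\sigma_{F_{i-1}}(\tau_{F_i}+\gamma^-_{\chi_{a_i}})\sigma_{F_{i+1}}=\sigma_{F_i}\tau_{F_i}\sigma_{F_{i+1}}$, where the correction term dies because $\chi_{a_i}\in\Lambda_{F_{i+1}}$ (no pairing with a truncated flag is involved); and (iii) all the resulting monomials cancel in pairs under adjacent transpositions of flags. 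Without steps (i) and (iii) the proof does not close.
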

\begin{proof}
We have that 
\[\dd(\Xi_{W,A}) = \sum_{\mathcal{F}} 
%(-1)^{l(\mathcal{F})} 
\frac{m(\mathcal{F})}{m(A)} \sum_{b \in A} (-1)^{\lvert A_{<b} \rvert} y(\mathcal{F},b)\prod_{a \in A \setminus \{b\}} x(\mathcal{F},a),\]
so define $Z(\mathcal{F},b)=y(\mathcal{F},b)\prod_{a \in A \setminus \{b\}} x(\mathcal{F},a)$.

If $k>1$, then $Z(\mathcal{F},a_1)=0$ because 
\[\tau_{F_1} \sigma_{F_2} = (\tau_{F_1}+ \gamma^-_{\chi_{a_1}}) \sigma_{F_2}=F(\emptyset, F_1, \emptyset)\sigma_{F_2} =0.\]
If $i \neq 1,k$, then 
\[Z(\mathcal{F},a_i)= 
%(-1)^{\epsilon(\mathcal{F})} 
\sigma_{F_1}\sigma_{F_2} \dots \sigma_{F_{i-2}} \sigma_{F_i}\tau_{F_i} \sigma_{F_{i+1}} \dots \sigma_{F_k} \prod_{a \in A \setminus \mathcal{F}} \beta_{\chi_a}^-,\]
because $\sigma_{F_{i-1}}\tau_{F_i}\sigma_{F_{i+1}}= \sigma_{F_{i-1}}(\tau_{F_i}+\gamma^-_{\chi_{a_{i}}}) \sigma_{F_{i+1}}=\sigma_{F_i} \tau_{F_i} \sigma_{F_{i+1}}$.
Moreover for $i \neq 1,k$, $\mathcal{F} = (a_1, a_2, \dots, a_k)$, we consider the flag $\mathcal{F}'=(a_1, \dots, a_{i-2}, a_i, a_{i-1}, a_{i+1}, \dots, a_k)$ and notice that 
\[ Z(\mathcal{F}',a_{i-1})= (-1)^{\lvert A_{<a_i}\rvert - \lvert A_{<a_{i-1}} \rvert - 1} Z(\mathcal{F},a_i)\]

If $k>0$, then:
\[ Z(\mathcal{F},a_k)+ \frac{m(\mathcal{F}\setminus a_k)} {m(\mathcal{F})} Z(\mathcal{F}\setminus a_k, a_k)=
% (-1)^{\epsilon(\mathcal{F})} 
\sigma_{F_1}\sigma_{F_2} \dots \sigma_{F_{k-2}} \sigma_{F_k}\tau_{F_k} \prod_{a \in A \setminus \mathcal{F}} \beta_{\chi_a}^-, \]
because $\sigma_{F_{k-1}}(\tau_{F_k}+ \frac{m(\mathcal{F}\setminus a_k)} {m(\mathcal{F})} \gamma^-_{\chi_{a_{k}}})= \sigma_{F_{k-1}}(\tau_{F_k}+ \gamma^-_{\chi}) = \sigma_{F_k} \tau_{F_k}$, where $\chi\in \Lambda_{F_k}$ is any element such that $m(\mathcal{F}) \chi - m(\mathcal{F}\setminus a_k) \chi_{a_{k}} \in \Lambda_{F_{k-1}}$.
Moreover, for $\mathcal{F} = (a_1, a_2, \dots, a_k)$ we consider the flag $\mathcal{F}'=(a_1, \dots, a_{k-2}, a_k, a_{k-1})$ and we have:
\begin{multline*}
Z(\mathcal{F'},a_{k-1})+ \frac{m(\mathcal{F'}\setminus a_{k-1})} {m(\mathcal{F})} Z(\mathcal{F'}\setminus a_{k-1}, a_{k-1})= \\ = (-1)^{\lvert A_{<a_i}\rvert - \lvert A_{<a_{i-1}} \rvert - 1} \left( Z(\mathcal{F},a_k)+ \frac{m(\mathcal{F}\setminus a_k)} {m(\mathcal{F})} Z(\mathcal{F}\setminus a_k, a_k) \right)
\end{multline*}

Finally, we have:
\begin{align*}
    &m(A)\dd(\Xi_{W,A}) = \sum_{\mathcal{F}} 
%(-1)^{l(\mathcal{F})} 
m(\mathcal{F}) \sum_{b \in A} (-1)^{\lvert A_{<b} \rvert} Z(\mathcal{F},b) \\
 &= \sum_{ \lvert \mathcal{F} \rvert > 1} \sum_{i=1}^{k} (-1)^{\lvert A_{<a_i} \rvert}  m(\mathcal{F})  \sigma_{F_1} \dots \sigma_{F_{i-2}} \sigma_{F_i}\tau_{F_i} \sigma_{F_{i+1}} \dots \sigma_{F_k} \prod_{a \in A \setminus \mathcal{F}} \beta_{\chi_a}^- \\
  &= \sum_{i=2}^{\lvert A \rvert} \sum_{\lvert \mathcal{F} \rvert \geq i} (-1)^{\lvert A_{<a_i}\rvert} m(\mathcal{F})  \sigma_{F_1} \dots \sigma_{F_{i-2}} \sigma_{F_i}\tau_{F_i} \sigma_{F_{i+1}} \dots \sigma_{F_k} \prod_{a \in A \setminus \mathcal{F}} \beta_{\chi_a}^- \\
 &= \sum_{i=2}^{\lvert A \rvert} \sum_{\substack{\lvert \mathcal{F} \rvert \geq i \\ a_i>a_{i-1}}} (1-(-1)^{1}) m(\mathcal{F})  \sigma_{F_1} \dots \sigma_{F_{i-2}} \sigma_{F_i}\tau_{F_i} \sigma_{F_{i+1}} \dots \sigma_{F_k} \prod_{a \in A \setminus \mathcal{F}} \beta_{\chi_a}^- \\
%    &= \sum_{\mathcal{F} \neq \emptyset} \sum_{i=1}^{k} (-1)^{l(\mathcal{F})+\epsilon(\mathcal{F}) +\lvert A_{<i} \rvert}  m(\mathcal{F})  \sigma_{F_1}\sigma_{F_2} \dots \sigma_{F_{i-2}} \sigma_{F_i}\tau_{F_i} \sigma_{F_{i+1}} \dots \sigma_{F_k} \prod_{a \in A \setminus \mathcal{F}} \beta_{\chi_a}^- \\
%    &= \sum_{i=1}^{\lvert A \rvert} \sum_{\substack{\lvert \mathcal{F} \rvert \geq i \\ a_i>a_{i-1}}} (-1)^{l(\mathcal{F})+\epsilon(\mathcal{F}) +l(\mathcal{F'})+\epsilon(\mathcal{F'})} m(\mathcal{F})  \sigma_{F_1} \dots \sigma_{F_{i-2}} \sigma_{F_i}\tau_{F_i} \dots \sigma_{F_k} \prod_{a \in A \setminus \mathcal{F}} \beta_{\chi_a}^- \\
    &= 0,
\end{align*}
where all the products are taken in the order of $A$ with $\tau_{F_i}$ in position of $a_i$, $\sigma_{F_i}$ in position $a_{i-1}$ and $\sigma_{F_j}$ in position $a_j$.
This completes the proof.
\end{proof}

We recall that, given two positive integers $k, h$, a \emph{$(k,h)$-shuffle} is an element $p$ of the symmetric group on the elements $\{1,\dots, k+h\}$ such that $p(i)<p(j)$ for every couple $i<j$ such that either $i,j\in\{1,\dots,k\}$, or $i,j\in\{k+1,\dots,k+h\}$.

\begin{lemma} \label{lemma:multiplication}
For all independent set $A$ and $B$, and for all connected components $W$ of $\cap_{a \in A} S_a$ and $L$ of $\cap_{b \in B} S_b$, we have
$\Xi_{W,A}\Xi_{L,B} =0$ if $A\cap B$ is not empty or if $A \sqcup B$ is not independent.
Otherwise
\[\Xi_{W,A}\Xi_{L,B} = (-1)^{l(A,B)} \sum_{V \in W \vee L} \Xi_{V,A\cup B}, \]
where $l(A,B)$ is the sign of the permutation reordering $(A,B)$.
\end{lemma}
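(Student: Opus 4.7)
The plan is to expand the product $\Xi_{W,A}\Xi_{L,B}$ as a sum indexed by pairs of flags $(\mathcal{F},\mathcal{G})$, and to match it term by term with the corresponding expansion of $(-1)^{l(A,B)}\sum_{V\in W\vee L}\Xi_{V,A\cup B}$. Reordering the exterior product $\prod_{a\in A}x(\mathcal{F},a)\prod_{b\in B}x(\mathcal{G},b)$ into the order inherited from $\{1,\dots,n\}$ produces the sign $(-1)^{l(A,B)}$, and this sign is independent of the flags chosen (it depends only on how $A$ and $B$ interleave in $\{1,\dots,n\}$).

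For the vanishing when $A\cap B$ contains some element $a_{0}$, every summand contains two factors labelled by $a_{0}$. If either factor is $\beta_{\chi_{a_{0}}}^{-}$, vanishing is immediate: $(\beta_{\chi_{a_{0}}}^{-})^{2}=0$ is an exterior square of a degree-one element, and $\beta_{\chi_{a_{0}}}^{-}\sigma_{V}=0$ by Lemma~\ref{lemma:prop_base}(2), since any flat $V$ attached to $a_{0}$ in a flag satisfies $V\subseteq S_{a_{0}}$, i.e.\ $\chi_{a_{0}}\in\Lambda_{V}$. The delicate sub-case is when both factors are of type $\sigma$, producing $\sigma_{F_{i}}\sigma_{G_{j}}$; here I would group pairs $(\mathcal{F},\mathcal{G})$ that differ only by the position of $a_{0}$ and show, via Lemma~\ref{lemma:prop_base}(1) together with the normalization $m(\mathcal{F})/m(A)$, that the grouped sum collapses to zero. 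For the remaining vanishing case, where $A\cap B=\emptyset$ but $A\sqcup B$ is dependent, a circuit $C\subseteq A\cup B$ forces every nonzero monomial in the expansion to contain a $\sigma$-subproduct indexed by a subset of $C$ which fails to be a chain; vanishing then follows from relations~\eqref{item:second_def_M} and~\eqref{item:last_def_M} of $\DD$ (using that in the maximal building set $\mathcal{G}=\LLL_{>\hat{0}}$, nested sets are chains).

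For the main identity, assume $A\cap B=\emptyset$ and $A\cup B$ is independent. To each pair $(\mathcal{F},\mathcal{G})$ and each $V\in W\vee L$ I would associate a flag $\mathcal{H}$ for $A\cup B$ ending at $V$, obtained by shuffling $\mathcal{F}$ and $\mathcal{G}$. Expanding the ``junction'' product $\sigma_{F_{|\mathcal{F}|}}\sigma_{G_{|\mathcal{G}|}}$ via Lemma~\ref{lemma:prop_base}(1) distributes the contribution across $V\in W\vee L$. Matching of coefficients then reduces to a combinatorial identity on the component counts $m(\cdot)$, expressing that the number of connected components of intersections of subtori behaves compatibly with the join $W\vee L$ and with the shuffle operation on flags.

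The main obstacle is twofold: the $\sigma\sigma$ sub-case of the vanishing for $A\cap B\neq\emptyset$, where cancellation appears only after bundling several flag pairs together and using Lemma~\ref{lemma:prop_base}(1); and the combinatorial bookkeeping of the multiplicities $m(\mathcal{F})m(\mathcal{G})/(m(A)m(B))$ in the main identity, which ultimately rests on a shuffle-compatible identity for component counts of intersections of subtori, distributed over the elements of $W\vee L$.
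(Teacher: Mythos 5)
Your plan for the main identity coincides with the paper's proof: both expand $\Xi_{W,A}\Xi_{L,B}$ over pairs of flags, prove the shuffle identity
\[
\prod_{i=1}^{k}\sigma_{F_i}\prod_{j=1}^{h}\sigma_{G_j}=\sum_{V\in F_k\vee G_h}\ \sum_{p\ \mathrm{shuffle}}\ \prod_{i=1}^{k+h}\sigma_{(F*_pG)_i}
\]
using only part (1) of \Cref{lemma:prop_base}, and then match coefficients through the count of connected components of $W\cap L$ lying in $V$, which is $\tfrac{m(A\cup B)}{m(A)m(B)}\tfrac{m(\mathcal F)m(\mathcal G)}{m(\mathcal F\cup\mathcal G)}$ --- exactly the ``shuffle-compatible identity for component counts'' you defer to. (The paper's proof, as written, covers only this case and leaves both vanishing cases implicit, so your attempt to treat them is welcome.)

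The genuine gap is the mechanism you propose for the vanishing when $A\sqcup B$ is dependent. It is false that every surviving monomial contains a $\sigma$-subproduct indexed by a non-chain: the summand in which both flags are empty is (up to sign) $\prod_{i\in A\sqcup B}\beta^-_{\chi_i}$ and contains no $s$-variables at all, and a mixed summand such as $\sigma_{F}\beta^-_{\chi_2}\beta^-_{\chi_3}$ expands into monomials $s_Vb_jb_{j'}$ whose $\sigma$-part is a single flat, hence trivially a chain. Relations \eqref{item:second_def_M} and \eqref{item:last_def_M} do not kill these terms; what does is the linear dependence of the characters indexed by the circuit. Concretely, over a fixed cone $K$ the equal sign property makes $-\min(0,\langle\chi_i,\cdot\rangle)$ linear, so the coefficient of $\prod_{c\in K}b_c$ in $\prod_i\beta^-_{\chi_i}$ is the determinant of a rank-deficient matrix (the computation of \Cref{lemma:beta_b} without the independence hypothesis), and for the mixed monomials one argues as in \Cref{lemma:facile} that after multiplying by $s_V$ the remaining characters become dependent modulo $\Lambda_V\otimes\Q$, again forcing a vanishing determinant. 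This argument needs to be supplied. Separately, the $\sigma\sigma$ sub-case of the vanishing for $A\cap B\neq\emptyset$ is only announced, not executed: a monomial containing $\sigma_{F_i}\sigma_{G_j}$ with $F_i\neq G_j$ both above $S_{a_0}$ need not vanish individually, so the grouping-and-cancellation you describe is a real piece of work that remains to be done before the lemma is proved in full.
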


\begin{proof}
Let $\mathcal{F}=(a_1,\dots, a_k) $ and $\mathcal{G}=(a_{k+1},\dots, a_{k+h})$ be two flags and let $A=\{ a_1,\dots, a_{k+h} \}$. %of size $k$ and $h$, respectively.
If $A$ is independent of cardinality $k+h$, then for each $(k,h)$-shuffle $p$ and each element $V \in F_k \vee G_h$ we have a flag $\mathcal{F}*_p \mathcal{G} := (a_{p(1)},\dots, a_{p(k+h)})$ adapted to $A$ and $V$.
%Otherwise, define $F *_p G :=0$.
By using only equation~(1) of \Cref{lemma:prop_base}, we have
\[
\prod_{i=1}^k \sigma_{F_i} \prod_{j=1}^h \sigma_{G_j} =
\sum_{V \in F_k \vee G_h} \sum_{p \textnormal{ shuffle}} 
%(-1)^{\sgn p}
 \prod_{i=1}^{k+h} \sigma_{(F*_p G)_i}.
\]
%Notice that $l(\mathcal{F})+l(\mathcal{G})= l(\mathcal{F}*_p \mathcal{G})+ \sgn p$.
The number of connected components of $W\cap L$ contained in $V$ is equal to 
\[\frac{m(A\cup B)}{m(A)m(B)} \frac{m(\mathcal{F}) m(\mathcal{G}) }{m(\mathcal{F}\cup \mathcal{G})}.\]
Finally,
\begin{align*}
\Xi_{W,A}\Xi_{L,B} &= (-1)^{l(A,B)} \sum_{\mathcal{F},\mathcal{G}} \frac{m(\mathcal{F})m(\mathcal{G})}{m(A)m(B)} \prod_{a \in A} x(\mathcal{F},a) \prod_{b \in B} x(\mathcal{G},b)\\
&= (-1)^{l(A,B)} \sum_{\mathcal{F},\mathcal{G}} \frac{m(\mathcal{F})m(\mathcal{G})}{m(A)m(B)} \sum_{V \in F_k \vee G_h} \sum_{p \textnormal{ shuffle}} \prod_{a \in A \sqcup B} x(\mathcal{F}*_p\mathcal{G},a) \\
&= (-1)^{l(A,B)} \sum_{\mathcal{F},\mathcal{G}} \sum_{V \in W \vee L} \sum_{p \textnormal{ shuffle}} \prod_{a \in A \sqcup B} x(\mathcal{F}*_p\mathcal{G},a) \\
&= (-1)^{l(A,B)} \sum_{V \in W \vee L} \sum_{\mathcal{F},\mathcal{G}}  \sum_{p \textnormal{ shuffle}} \prod_{a \in A \sqcup B} x(\mathcal{F}*_p\mathcal{G},a) \\
&= (-1)^{l(A,B)} \sum_{V \in W \vee L} \sum_{\mathcal{H}} \prod_{a \in A \sqcup B} x(\mathcal{H},a) \\
&= (-1)^{l(A,B)} \sum_{V \in W \vee L} \Xi_{V,A\cup B},
\end{align*}
where we used the fact that flags $\mathcal{H}$ adapted to $A\sqcup B$ and $V$ are in bijection with flags $\mathcal{F}*_p \mathcal{G}$ where $p$ runs over all the $(k,h)$-shuffles, $\mathcal{F}$ over all flags adapted to $A$ and $W$, and $\mathcal{G}$ over all flags adapted to $B$ and $L$.
So the claim follows.
\end{proof}

\begin{lemma}\label{lemma:facile}
If $\chi \in \Lambda_W$, then $\Xi_{W,A}\beta_\chi=0$.
\end{lemma}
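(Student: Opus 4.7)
The plan is to show that every summand of $\Xi_{W,A}$ dies in $\DD$ after multiplication by $\beta_\chi$. A first reduction uses the $\mathbb{Q}$-linearity of the map $\chi\mapsto\beta_\chi=\sum_j\langle\chi,j\rangle b_j$: since $A$ is independent with $\cd W=|A|$, the family $\{\chi_a\}_{a\in A}$ is a $\mathbb{Q}$-basis of $\Lambda_W\otimes\mathbb{Q}$, so it suffices to prove $\Xi_{W,A}\beta_{\chi_a}=0$ for each fixed $a\in A$.

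Now I fix such an $a$ and examine one flag $\mathcal{F}=(a_1,\dots,a_k)$ adapted to $A$ and $W$ at a time. Every factor in $\prod_{b\in A}x(\mathcal{F},b)$ has odd total degree, so I can anticommute $\beta_{\chi_a}$ into the slot indexed by $a\in A$, paying only a sign. If $a=a_i\in\mathcal{F}$, that slot holds $\sigma_{F_i}$; since $F_i\subseteq S_a$ by construction of the flag, $\chi_a\in\Lambda_{F_i}$, and \Cref{lemma:prop_base}(2) gives $\beta_{\chi_a}\sigma_{F_i}=0$, killing the whole summand. If instead $a\notin\mathcal{F}$, the slot holds $\beta_{\chi_a}^-$, and using $(\beta_{\chi_a}^-)^2=0$ the neighbouring factor simplifies to $\beta_{\chi_a}^-\beta_{\chi_a}=\beta_{\chi_a}^-\beta_{\chi_a}^+$.

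The step I expect to be the main obstacle is verifying that $\beta_{\chi_a}^-\beta_{\chi_a}^+=0$ in $\DD$. Expanding, the monomial $b_jb_{j'}$ receives a nonzero coefficient exactly when $\langle\chi_a,j\rangle$ and $\langle\chi_a,j'\rangle$ have strictly opposite signs. This is precisely what is ruled out by the equal sign property of the singleton basis $\{\chi_a\}$ of $\Lambda_{S_a}$ with respect to the good fan $\Delta$: every cone of $\Delta$ has all its primitive generators on one side of the hyperplane $\chi_a^{\perp}$. Hence such a pair $\{j,j'\}$ is never a cone of $\Delta$, so $b_jb_{j'}=0$ in $\DD$ by relation~\eqref{item:third_def_M} of \Cref{def:D}. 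Combining the two cases yields $\Xi_{W,A}\beta_{\chi_a}=0$ for every $a\in A$, and the initial linearity reduction then gives $\Xi_{W,A}\beta_\chi=0$ for all $\chi\in\Lambda_W$.
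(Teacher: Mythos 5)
Your proof is correct. It is organized in the same flag-by-flag way as the paper's argument, but the two case analyses are genuinely different, so it is worth comparing them. You first use linearity of $\chi\mapsto\beta_\chi$ to reduce to $\chi=\chi_a$ with $a\in A$ (legitimate in the divisorial case, where the $\chi_a$, $a\in A$, span $\Lambda_W\otimes\Q$), and then split according to whether $a$ lies in the flag $\mathcal{F}$: if it does, the summand dies by \Cref{lemma:prop_base}(2) since $\chi_a\in\Lambda_{F_i}$; if it does not, everything collapses onto the single identity $\beta^-_{\chi_a}\beta^+_{\chi_a}=0$, which you verify directly from the equal sign property of $\{\chi_a\}$ together with relation \eqref{item:third_def_M} of \Cref{def:D} --- this identity is essentially the degenerate case ($K\not\subset Z$) of \Cref{lemma:beta_b}. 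The paper instead keeps $\chi\in\Lambda_W$ arbitrary and splits on whether $\chi\in\Lambda_{F_k}$ for the top element $F_k$ of the flag: if so it uses the same vanishing $\beta_\chi\sigma_{F_k}=0$; if not, it observes that $\chi$ together with $\{\chi_a\}_{a\in A\setminus\mathcal{F}}$ is dependent modulo $\Lambda_{F_k}$ and invokes the principle that a product of $\beta$'s indexed by dependent characters vanishes (dependent rays cannot span a simplicial cone, equivalently the relevant coefficients are degenerate determinants). Your route is more elementary and self-contained --- the only geometric input is the equal sign property for one character at a time --- while the paper's version avoids the basis reduction and is phrased in a way that does not rely on $\Lambda_W$ being spanned by the rank-one lattices $\Lambda_{S_a}$, which is the feature special to the divisorial setting.
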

\begin{proof}
Let $\mathcal{F}$ be a flag adapted to $A$ and $W$, we show that $\beta_\chi \prod_{a\in A} x(\mathcal{F},a)  = 0$.

Notice that $\prod_{i=0}^k \beta_{\chi_i}=0$ if the elements $\chi_{i}$, $i=0, \dots, k$ are linearly dependent because $k+1$ distinct rays in a $k$-dimensional vector space cannot span a simplicial cone.

Let $L$ be a layer contained in $F_k$, if $\chi \in \Lambda_L$ then $\beta_\chi s_L=0$ and hence $\beta_\chi \sigma_{F_k}=0$.
If $\chi \not \in \Lambda_{F_k}$, then $\chi$ and $\chi_a$ for $a \in A \setminus \mathcal{F}$ are nonzero and dependent in $\Lambda_W/\Lambda_{F_k}$ therefore $s_L \beta_\chi \prod_{a \in A \setminus \mathcal{F}} \beta_{\chi_a} = 0$.

Finally we have $\beta_\chi \sigma_{F_k} \prod_{a \in A \setminus \mathcal{F}} \beta_{\chi_a} = 0$ and so the claim holds.
\end{proof}

\begin{lemma} \label{lemma:beta_b}
Let $A$ be an independent set and $s_a\in \{ +,-\}$ for $a \in A$. Let $Z$ be the set $\{v \in \Lambda^* \mid \langle v, s_a \chi_a \rangle >0 \textnormal{ for all } a \in A \}$.
%intersection $\cap_{a\in A} H_{\chi_a}^{s_a}$\todo{definire $H^s$, $\pi$}.
Consider the projection $\pi \colon \Lambda^* \to \Lambda^*/\Ann \Lambda_A$.
We have that
\[ \prod_{a \in A} \beta_{\chi_a}^{s_a} = m(A) \sum_{ \substack{ K \in \mathcal{C}_\Delta^{\lvert A\rvert} \\ K \subset Z}} \Vol (\pi(K)) \prod_{c \in K} b_c,\]
where the last product is taken in any order such that the two bases $(s_a \chi_a)_{a \in A}$ and $(\pi(c))_{c \in K}$ are both positive or both negative.
\end{lemma}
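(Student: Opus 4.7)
The plan is to expand $\prod_{a \in A} \beta_{\chi_a}^{s_a}$ directly from the definition and recognize the coefficient of each $\prod_{c \in K} b_c$ as a truncated determinant, which is then evaluated using the equal sign property of the good fan $\Delta$.

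Substituting $\beta_{\chi_a}^{s_a} = \sum_{c \in \mathcal{R}_\Delta} \max(0, \langle s_a \chi_a, c \rangle) b_c$ and expanding, the product becomes a sum of monomials indexed by tuples $(c_a)_{a\in A} \in \mathcal{R}_\Delta^A$. The relation $b_c^2 = 0$ together with \eqref{item:third_def_M} of \Cref{def:D} restrict the surviving terms to those tuples with pairwise distinct entries whose underlying set $K$ is a cone in $\mathcal{C}_\Delta^{|A|}$. Grouping by $K$ and ordering $K$ so that the bases $(s_a \chi_a)_{a \in A}$ of $\Lambda_A \otimes \Q$ and $(\pi(c))_{c \in K}$ of $(\Lambda^*/\Ann \Lambda_A) \otimes \Q$ are compatibly oriented absorbs the sign arising from the anticommutativity of the $b_c$, so that the coefficient of $\prod_{c \in K} b_c$ becomes $\det \bigl( \max(0, \langle s_a \chi_a, c \rangle) \bigr)_{a, c}$.

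The key step is then the equal sign property of the good fan: for each cone $K \in \mathcal{C}_\Delta$ and each $a \in A$, there is a sign $s_a^K \in \{\pm\}$ such that $\langle s_a^K \chi_a, c\rangle \geq 0$ for every $c \in K$. Consequently, for the fixed $s_a$, the $a$-th row of the truncated matrix is either the full nonnegative row $(\langle s_a \chi_a, c\rangle)_c$ (when $s_a = s_a^K$) or identically zero. The determinant therefore vanishes unless every row is nonnegative, which is precisely the condition $K \subset Z$; in that case the truncated determinant coincides with the full determinant $\det \bigl( \langle s_a \chi_a, c \rangle \bigr)_{a, c}$.

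The final step evaluates this last determinant via the natural perfect pairing $\Lambda_A \times (\Lambda^*/\Ann \Lambda_A) \to \Z$: the sublattice $\bigoplus_a \Z \cdot s_a \chi_a \subseteq \Lambda_A$ has index $m(A)$ (the number of connected components of $\bigcap_{a \in A} S_a$), while $\{\pi(c)\}_{c \in K}$ spans a parallelepiped of volume $\Vol(\pi K)$ in $\Lambda^*/\Ann \Lambda_A$; hence the determinant equals $\pm m(A) \cdot \Vol(\pi K)$, with the orientation convention ensuring the positive sign. The main conceptual obstacle is identifying the equal sign property as the input that both forces the vanishing outside $Z$ and makes the max-truncation vacuous inside $Z$; this is why the goodness of $\Delta$ is essential for the statement to hold in the stated form.
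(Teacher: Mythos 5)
Your proof is correct and follows essentially the same route as the paper's: expand the product, observe that only monomials indexed by $|A|$-dimensional cones survive, use the equal sign property to kill the cones outside $Z$ and to replace the truncated pairings by the actual ones inside $Z$, and evaluate the resulting determinant as $\det(s_a\chi_a)\det(\pi(k_i))=m(A)\Vol(\pi(K))$ via the orientation convention. Your organization (writing the truncated determinant uniformly and then dichotomizing row by row) is a slightly cleaner packaging of the same argument, and your index computation $[\Lambda_A:\bigoplus_a\Z\, s_a\chi_a]=m(A)$ is the same multilinearity step the paper performs.
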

\begin{proof}
Let $K\in \mathcal{C}_\Delta^{\lvert A\rvert}$ be a $\lvert A \rvert$-dimensional cone not contained in $Z$: then there exists $c\in K$ such that $c \not \in Z$.
So, for some $a \in A$, we have $\min (0, \langle s_a \chi_a, c' \rangle)=0$ for all $c' \in K$ by using the equal sign property.
It easy to see that the monomial $\prod_{c \in K} b_c$ does not appear in $\prod_{a \in A} \beta_{\chi_a}^{s_a}$.

Now suppose that $K=(k_1, \dots, k_l)\in \mathcal{C}^l_\Delta$ is contained in $Z$, the coefficient of $\prod_{i=1}^{l} b_{k_i}$ in $\prod_{a \in A} \beta_{\chi_a}^{s_a}$ is 
\[\sum_{\sigma \in \SG_l} \prod_{i=1}^k (-1)^{\sgn \sigma} \langle s_i\chi_i, k_{\sigma(i)} \rangle.\]
Now notice that $\langle s_i\chi_i, k_{\sigma(i)} \rangle = \langle s_i\chi_i, \pi(k_{\sigma(i)}) \rangle$ for all $i$ and $\sigma$.

The equality 
\[\sum_{\sigma \in \SG_l} \prod_{i=1}^l (-1)^{\sgn \sigma} \langle s_i\chi_i, \pi(k_{\sigma(i)}) \rangle = \det(s_i\chi_i) \det(\pi(k_{i}))\] follows from the multilinearity in the entries $s_i\chi_i$ and $\pi(k_{i})$.
Since the two bases $(s_a \chi_a)_{a \in A}$ and $(\pi(k))_{k \in K}$ are both positive (resp.\ negative) then $\det(s_i\chi_i) \det(\pi(k_{i}))$ is positive and equals to $m(A) \Vol (\pi(K))$.
\end{proof}

The proof of this corollary follows from the proof of \Cref{lemma:beta_b} by omitting some steps.
\begin{corollary}\label{cor:beta_cones}
Let $A$ be an independent set, then:
\[ \prod_{a \in A} \beta_{\chi_a} = m(A) \sum_{ K \in \mathcal{C}_\Delta^{\lvert A\rvert}} \Vol (\pi(K)) \prod_{c \in K} b_c,\]
where the last product is taken in any order such that the two bases $(\chi_a)_{a \in A}$ and $(\pi(c))_{c \in K}$ are both positive or both negative. \hfill \qedsymbol
\end{corollary}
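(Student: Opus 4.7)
The plan is to mimic the proof of \Cref{lemma:beta_b}, with the simplification that $\beta_\chi$ is already the full linear form. Indeed, unpacking the definitions $\beta_\chi = \beta_\chi^+ - \beta_\chi^-$ and recalling that $\beta_\chi^\pm$ keep only the positive (resp.\ negative) part of each pairing $\langle \chi, j\rangle$, one gets the clean formula
\[ \beta_\chi = \sum_{j \in \mathcal{R}_\Delta} \langle \chi, j\rangle\, b_j. \]
Hence, fixing the chosen ordering on $A = \{a_1 < \dots < a_l\}$, the exterior product expands as
\[ \prod_{a \in A} \beta_{\chi_a} = \sum_{(j_1,\dots,j_l) \in \mathcal{R}_\Delta^l}\: \Bigl( \prod_{i=1}^l \langle \chi_{a_i}, j_i\rangle \Bigr)\, b_{j_1}\cdots b_{j_l}. \]

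Next I would collect the monomials by their underlying set $K = \{k_1,\dots, k_l\}\subseteq \mathcal{R}_\Delta$. By relation \eqref{item:third_def_M} of \Cref{def:D}, the product $\prod_{c\in K} b_c$ vanishes in $\DD$ unless $K \in \mathcal{C}_\Delta^{|A|}$, so only $l$-dimensional cones of $\Delta$ contribute. For such a $K$, with $k_1, \dots, k_l$ fixed in some order, the coefficient of $b_{k_1}\cdots b_{k_l}$ equals the determinant $\det(\langle \chi_{a_i}, k_j\rangle)_{i,j}$. Since each $\chi_a$ annihilates $\Ann \Lambda_A$, we have $\langle \chi_{a_i}, k_j\rangle = \langle \chi_{a_i}, \pi(k_j)\rangle$, and then multilinearity of the determinant (exactly as in the last step of \Cref{lemma:beta_b}) factors this as $\det(\chi_{a_i})\det(\pi(k_j))$, whose absolute value is $m(A)\,\Vol(\pi(K))$.

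Finally, I would fix the ordering of $K$ so that the bases $(\chi_{a})_{a\in A}$ and $(\pi(c))_{c \in K}$ have the same sign; this makes $\det(\chi_{a_i})\det(\pi(k_j)) = +m(A)\Vol(\pi(K))$ and yields the claimed identity after summing over $K$. The only real work beyond \Cref{lemma:beta_b} is bookkeeping: because no region $Z$ restricts the sum, \emph{every} top-dimensional cone of $\Delta$ contributes, so the combinatorial restriction $K \subset Z$ is dropped. The main (minor) obstacle is just ensuring that the sign convention on the ordering of $K$ is compatible for \emph{every} cone simultaneously, but this follows from choosing orientations of the two quotient spaces once and for all.
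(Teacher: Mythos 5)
Your proof is correct and is essentially the one the paper intends: the Corollary is stated in the paper with only the remark that it follows from the proof of \Cref{lemma:beta_b} ``by omitting some steps,'' and your write-up spells out exactly those steps. You correctly observe that $\beta_\chi = \beta_\chi^+ - \beta_\chi^- = \sum_{j\in\mathcal R_\Delta}\langle\chi,j\rangle\,b_j$, expand the exterior product, kill non-cones by relation \eqref{item:third_def_M}, reduce the coefficient of $\prod_{c\in K}b_c$ to the determinant $\det(\langle\chi_{a_i},\pi(k_j)\rangle)$ by the equal-sign/annihilator remark, and then factor it as $\det(\chi_{a_i})\det(\pi(k_j))$ via multilinearity, which is $\pm m(A)\Vol(\pi(K))$ with sign controlled by the orientation convention on $K$. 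The only place where you ``omit'' relative to \Cref{lemma:beta_b} is precisely the vanishing argument for cones not contained in the region $Z$, which here has no analogue since the full linear forms $\beta_{\chi_a}$ contribute on every cone; you note this, and it is exactly the intended simplification.
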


\begin{lemma}\label{lemma:cones}
Let $X$ be a subset such that $\lvert X \rvert = \rk(X)+1$, $C\subseteq X$ be the unique circuit, $A \subset X$ be a independent set, $F$ be a connected component of $\cap_{a \in A} S_a$, and $j$ be an element of $C\setminus A$.
There exists a minimal relation $\sum_{i \in C} c_i m(C \setminus \{i\})\chi_i =0$ for some $c_i \in \{+,-\}$. 
Suppose that $C':=C \setminus A$ has cardinality at least $2$, then
\[\sigma_F \sum_{j \in C'} \frac{(-1)^{\lvert C'_{<j}\lvert}}{m(X \setminus \{j\})} \prod_{i \in C'\setminus \{j\}} \beta_{\chi_i}^{\delta(i,j)} =0,\]
where $\delta(i,j)=c_i c_j$ if $i<j$ and $\delta(i,j)=-$ if $i>j$.
%Moreover if $A=\emptyset$ the analogous equation without the factor $\sigma_F$ holds.
\end{lemma}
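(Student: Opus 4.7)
The plan is to expand each product $\prod_{i\in C'\setminus\{j\}} \beta_{\chi_i}^{\delta(i,j)}$ using \Cref{lemma:beta_b} and then to reorganize the resulting sum cone by cone, reducing the claim to a Cramer-type identity that follows directly from the circuit relation $\sum_{i\in C} c_i\, m(C\setminus\{i\})\,\chi_i = 0$.

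First, using the identity $\beta_\chi^- = \beta_{-\chi}^+$, I rewrite each $\beta_{\chi_i}^{\delta(i,j)}$ as $\beta_{\delta(i,j)\chi_i}^+$ and apply \Cref{lemma:beta_b} to obtain
\[\prod_{i\in C'\setminus\{j\}} \beta_{\chi_i}^{\delta(i,j)} = m\bigl(C'\setminus\{j\}\bigr) \sum_{\substack{K\in\mathcal{C}_\Delta^{|C'|-1}\\ K\subseteq Z_j}} \Vol\bigl(\pi_j(K)\bigr) \prod_{c\in K} b_c,\]
where $Z_j = \{v\in\Lambda^* \mid \delta(i,j)\langle v,\chi_i\rangle>0\ \forall i\in C'\setminus\{j\}\}$ and $\pi_j$ denotes the projection modulo $\Ann\Lambda_{C'\setminus\{j\}}$. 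Multiplying by $\sigma_F$ kills the contribution of every cone $K$ not contained in $\Ann\Lambda_F$: indeed, relation~\eqref{item:first_def_M} of \Cref{def:D} gives $s_L b_c = 0$ for $c\notin\Ann\Lambda_L$, and $\Ann\Lambda_L\subseteq\Ann\Lambda_F$ whenever $L\geq F$. Thus it suffices to show, for each fixed $K=\{k_1,\dots,k_{|C'|-1}\}\subseteq\Ann\Lambda_F$, the vanishing of
\[\sum_{\substack{j\in C'\\ K\subseteq Z_j}} \frac{(-1)^{|C'_{<j}|}\, m(C'\setminus\{j\})}{m(X\setminus\{j\})}\,\Vol\bigl(\pi_j(K)\bigr)\,\varepsilon_j,\]
where $\varepsilon_j\in\{\pm1\}$ records the permutation needed to bring $\prod_{c\in K} b_c$ into the orientation prescribed by \Cref{lemma:beta_b}.

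Then I recognise $m(C'\setminus\{j\})\,\Vol(\pi_j(K))\,\varepsilon_j\,\mathbf{1}[K\subseteq Z_j]$ as the signed $(|C'|-1)\times(|C'|-1)$ minor of the matrix $M=(\langle\chi_i,k_l\rangle)_{i\in C',\,l=1,\dots,|C'|-1}$ obtained by deleting its $j$-th row; this is precisely how the formula in \Cref{lemma:beta_b} is derived, and the indicator $\mathbf{1}[K\subseteq Z_j]$ emerges automatically from the sign of the determinant (a cone outside $Z_j$ yields a vanishing contribution). After identifying the ratio $m(C'\setminus\{j\})/m(X\setminus\{j\})$ with a $j$-independent multiple of $c_j\,m(C\setminus\{j\})$ via a multiplicative decomposition of the arithmetic multiplicities along $A$ and $C\setminus\{j\}$, the displayed sum becomes the Laplace expansion along the first column of $\det[\,w\mid M\,]$, where $w = \bigl(c_i\,m(C\setminus\{i\})\bigr)_{i\in C'}$. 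This determinant vanishes because the circuit relation, reduced modulo $\Lambda_A$, gives $\sum_{i\in C'} w_i\,\chi_i \in \Lambda_A$, whereas the rays $k_l$ lie in $\Ann\Lambda_F\subseteq\Ann\Lambda_A$ and so pair trivially with $\Lambda_A$, forcing $\sum_i w_i\langle\chi_i,k_l\rangle=0$ for every $l$.

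The main obstacle is the bookkeeping of multiplicities and signs: one has to verify that $m(C'\setminus\{j\})/m(X\setminus\{j\})$ really agrees with $c_j\,m(C\setminus\{j\})$ up to a factor independent of $j$, and that the reordering sign $\varepsilon_j$ exactly absorbs the discrepancy between the fixed ordering of $C'$ and the orientation of the rays of $K$ demanded by \Cref{lemma:beta_b}, so that the Laplace expansion is reproduced with the correct global sign.
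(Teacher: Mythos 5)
Your reduction to a fixed cone $K$ (after using $\sigma_F$ to discard rays outside $\Ann\Lambda_F$, and \Cref{lemma:beta_b} to expand each product) is sound, but the final step contains two genuine errors. First, the coefficient of $\prod_{c\in K}b_c$ in $\prod_{i\in C'\setminus\{j\}}\beta_{\chi_i}^{\delta(i,j)}$ is \emph{not} the signed minor of $M=(\langle\chi_i,k_l\rangle)$ with an indicator that ``emerges from the sign of the determinant'': because $\beta^{\pm}_\chi$ involves $\max(0,\cdot)$ rather than the raw pairing, the equal sign property zeroes out an entire row whenever $K\not\subseteq \overline{Z_j}$, while the corresponding minor of $M$ itself is generally nonzero. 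So your sum is only a \emph{partial} Laplace expansion, over the set $J(K)=\{j: K\subseteq \overline{Z_j}\}$, which for a given $K$ is typically a proper subset of $C'$. Second, even the full expansion would not help: the circuit relation gives $w^{T}M=0$, i.e.\ $w$ is \emph{orthogonal} to the columns of $M$, which is the opposite of what you need --- when $M$ has rank $\lvert C'\rvert-1$ this forces $\det[w\mid M]\neq 0$. (The correct consequence of $w^{T}M=0$ is the Cramer-type proportionality $(-1)^{j}\det M^{(j)}=\lambda w_j$ for a constant $\lambda$, not the vanishing of $\det[w\mid M]$.) A concrete check: with $C'=\{0,1,2\}$, all $c_i=+$, all multiplicities $1$, and rows $(1,2),(1,1),(-2,-3)$, one finds $\det[w\mid M]=-3\neq 0$, while the lemma's sum vanishes because only $j=1,2$ contribute and their contributions cancel.

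That cancellation in pairs is exactly what the paper's proof isolates: after normalizing the signs $c_i$ by a shuffle, it writes each region as $Z_j=X_j\cup Y_j$ with $X_{j-1}=Y_j$, $X_l=\emptyset$, $Y_{k-1}=\emptyset$, $X_k=Y_0$ (using property (P) of arithmetic matroids to make the multiplicity prefactor independent of $j$), so the alternating sum over $j$ telescopes cone by cone. If you want to salvage your fixed-$K$ viewpoint, you would have to replace the Laplace argument by: (i) the proportionality $\det M^{(j)}\propto (-1)^j w_j$, and (ii) a combinatorial analysis showing that the indices in $J(K)$ pair up with opposite signs --- and step (ii) is precisely the telescoping structure above, so the geometric decomposition cannot be avoided.
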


\begin{proof}
For the sake of simplifying the notation, let us suppose $C'=\{0,1, \dots, l\}$.
The first step of the proof is to reduce to the case $c_i=-$ for $i<k$ and $c_i=+$ for $i \geq k$ for some $k\in C'$.
Let $\mu \in \SG_{\lvert C' \rvert}$ be the unique shuffle that reorders $C'$ in such a way that $c_i=-$ for $i<k$ and $c_i=+$ for $i \geq k$.
We have 
\[\sum_{j \in C'} \frac{(-1)^j}{m(X \setminus \{j\})} \prod_{i \in C'\setminus \{j\}} \beta_{\chi_i}^{\delta(i,j)} =
\sgn (\mu) \sum_{j \in C'} \frac{(-1)^{\mu(j)}}{m(X \setminus \{j\})} \prod_{i \in \mu(C'\setminus \{j\})} \beta_{\chi_i}^{\delta(i,\mu(j))}\]
where we use $\sgn(\mu)=(-1)^{j-\mu(j)}\sgn(\mu_{|C'\setminus\{j\}})$.
Moreover notice that $\delta(i,j)=\delta(\mu(i),\mu(j))$ since $(i,j)$ is an inversion of $\mu$ only if $c_ic_j=-$. Thus from now on we assume $c_i=-$ for $i<k$ and $c_i=+$ for $i \geq k$.

Define $Z_j= (\cap_{i < j} H_{\chi_i}^{c_ic_j} \cap_{i>j} H_{\chi_i}^{-})/ \Ann \Lambda_F$, $X_j=Z_j \cap H_{\chi_j}^+$ and $Y_j=Z_j \cap H_{\chi_j}^-$.
The following properties follows easily from the definition:
\begin{align*}
    & Z_j=X_j \cup Y_j & & \dim (X_j \cap Y_j) < l \\
    & X_l= \emptyset & & Y_{k-1}=\emptyset \\
    & X_k=Y_0 & & X_{j-1}=Y_j \textnormal{ for all } j\neq k
\end{align*}

%Since $\beta_{\chi}^{s}=\beta_{-\chi}^{-s}$, we can assume without loss of generality that $\sum_{i \in C} m(C \setminus \{i\}) \chi_i =0$, ie $c_i=+$ for all $i\in C$.
%For the sake of notation we suppose $C'=\{0,1,\dots , l\}$.
%Let $Z_j \subset \Ann \Lambda_{F}$ the set of elements $x$ such that $\langle \chi_l, x\rangle \geq 0$ for $l<j$, $l \in C'$ and $\langle \chi_l, x\rangle \leq 0$ for $l>j$, $l \in C'$.
By \Cref{lemma:beta_b} we have 
\begin{align*}
 \frac{ \sigma_F }{m(X \setminus \{j\})} \prod_{i \in C'\setminus \{j\}} \beta_i^{\delta(i,j)} &= \sigma_F  \frac{m(C' \setminus \{j\})}{m(X \setminus \{j\})}\sum_{ \substack{ K \in \mathcal{C}_\Delta^l \\ K \subset Z_j}} \Vol (\pi(K)) \prod_{c \in K} b_c \\
&= \sigma_F \frac{m(C')}{m(X)} \sum_{ \substack{ K \in \mathcal{C}_\Delta^l \\ K \subset Z_j}} \Vol (\pi(K)) \prod_{c \in K} b_c,
\end{align*}
where in the last equality we used the property (P) of \emph{arithmetic matroids} (see \cite{BrMo}).
For $j\neq k$ the bases $(\delta(i,j)\chi_i)_{i \neq j}$ and $(\delta(i,j-1)\chi_i)_{i \neq j-1}$ have the same orientation.
The bases $(-\chi_i)_{i >0}$ and $(\delta(i,k)\chi_i)_{i \neq k}$ have the same orientation if and only if $(-1)^{k-1}=1$.

%We have $Z_i \subseteq Z_{i-1} \cup Z_{i+1}$ for all $i \in C'$ (with the notation $Z_{-1}= Z_{l+1} = \emptyset$) and the intersection of the interior of $Z_i$ and $Z_j$ is empty if $|i-j|>1$.
Since
\[
    \sigma_F \sum_{j \in C'} \frac{(-1)^{\lvert C'_{<j}\lvert}}{m(X \setminus \{j\})} \prod_{i \in C'\setminus \{j\}} \beta_i^{\delta(i,j)} =
     \frac{m(C')}{m(X)} \sigma_F  \sum_{j \in C'}  \sum_{ \substack{ K \in \mathcal{C}_\Delta^l \\ K \subset Z_j}} (-1)^j \Vol (\pi(K)) \prod_{c \in K} b_c,
\]
it is enough to consider the following:
\begin{align*}
    \sum_{j \in C'}  &\sum_{ K\subset Z_j} (-1)^j \Vol (\pi(K)) \prod_{c \in K} b_c = 
    \!\begin{multlined}[t]
        \sum_{j \in C'}  \sum_{ K \subset X_j} (-1)^j \Vol (\pi(K)) \prod_{c \in K} b_c + \\
    + \sum_{j \in C'}  \sum_{ K \subset Y_j} (-1)^j \Vol (\pi(K)) \prod_{c \in K} b_c
    \end{multlined} \\
    & = \sum_{ K \subset X_k} (-1)^k \Vol (\pi(K)) \prod_{c \in K} b_c + \sum_{ K \subset Y_0} \Vol (\pi(K)) \prod_{c \in K} b_c =0
\end{align*}
so the claim follows.
\end{proof}

Let $C\subseteq \{1,\dots,n\}$ be a circuit oriented by the signs $(c_i)_{i \in C}$. We recall the following definition, which was introduced by Postnikov in \cite{Po}. For each $A\subseteq \{1,\dots,n\}$, we say that $C/A$ is a \emph{positroid} if $c_i=c_j$ for all $i,j \in C \setminus A$.

\begin{lemma}\label{lemma:circuit}
Consider $X\subseteq \{1,\dots,n\}$ such that $\lvert X \rvert = \rk(X)+1$, let $C\subseteq X$ be the unique circuit and $L$ be a connected component of $\cap_{i \in X} S_i$.
There exists a minimal relation $\sum_{i \in C} c_i m(C \setminus \{i\})\chi_i =0$ for some $c_i \in \{+,-\}$.
Then, we have
\[ \sum_{\substack{X\setminus C \subseteq A \subsetneq X\\ C/A \textnormal{ positroids}}} (-1)^{\lvert X_{<j} \rvert + l(A,B)} \frac{m(A)}{m(X\setminus \{j\})} \Xi_{W,A} \beta_B = 0 \]
where $j=\max (X \setminus A)$, $B=C \setminus (A \cup \{j\})$, $W$ is the connected component of $\cap_{a \in A} S_a$ containing $L$ and $l(A,B)$ is the sign of the permutation that reorders $(A,B)$.
\end{lemma}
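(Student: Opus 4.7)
The plan is to derive the identity from \Cref{lemma:cones} by expanding each $\Xi_{W,A}\beta_B$ via its defining sum over flags and then matching the resulting monomials with those produced by a family of applications of \Cref{lemma:cones}. First, I would unfold $\Xi_{W,A}$: it is a sum over flags $\mathcal F$ adapted to $A$ and $W$, contributing products of $\sigma_{F_i}$ for the flag-indices and $\beta^{-}_{\chi_a}$ for the non-flag indices of $A$. Expanding $\beta_{\chi_i}=\beta^+_{\chi_i}-\beta^-_{\chi_i}$ in $\beta_B$ then splits every summand into terms indexed by a sign function on $B$, so that the left-hand side of the statement becomes a sum of monomials of the form $\prod_i \sigma_{F_i}\prod_a \beta^{\varepsilon_a}_{\chi_a}$ indexed by a flag and a signed partition of the elements of $C$ not appearing in the flag.

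Second, for each independent subset $A_0$ with $X\setminus C\subseteq A_0\subsetneq X$ and $\lvert C\setminus A_0\rvert\geq 2$, I would apply \Cref{lemma:cones} to $A_0$ and to the connected component $F_0$ of $\cap_{a\in A_0}S_a$ containing $L$, obtaining the vanishing
\[
\sigma_{F_0}\sum_{j\in C\setminus A_0}\frac{(-1)^{\lvert(C\setminus A_0)_{<j}\rvert}}{m(X\setminus\{j\})}\prod_{i\in(C\setminus A_0)\setminus\{j\}}\beta^{\delta(i,j)}_{\chi_i}=0.
\]
Using relation~(1) of \Cref{lemma:prop_base}, each factor $\sigma_{F_0}$ in these relations expands into sums of products $\prod_i\sigma_{F_i}$ built from longer flags that descend from $F_0$ into the deeper flats indexed by $A\setminus A_0$. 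Summing the resulting expanded identities over all valid $A_0$ reproduces, by reindexing on the pair (flag $\mathcal F$, sign configuration of $C\setminus\mathcal F$), the total expansion of the left-hand side of the statement. The positroid condition on $C/A$ is exactly what selects, inside this reindexing, the monochromatic sign configurations on $C\setminus A$ and matches the $\beta^{\delta(i,j)}$ factors of \Cref{lemma:cones}. Terms with $\lvert C\setminus A\rvert=1$, where \Cref{lemma:cones} does not directly apply, are absorbed into the longer-flag expansions produced from $A_0$'s with $\lvert C\setminus A_0\rvert\geq 2$.

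The main obstacle is the reconciliation of three sign systems: the $\delta(i,j)=c_ic_j$ for $i<j$ (resp.\ $-$ for $i>j$) appearing in \Cref{lemma:cones}, the orientation signs $c_i$ governing the positroid condition, and the combinatorial factor $(-1)^{\lvert X_{<j}\rvert+l(A,B)}$ of the statement. The matching amounts to verifying, for each flag $\mathcal F$ and each ordered sign partition of $C\setminus\mathcal F$, that the shuffle reordering the index set so that the flag part appears first, followed by $A\setminus\mathcal F$ and then $B$ in the order of $C$, produces exactly the prescribed sign. The multiplicities match in parallel: the factor $m(\mathcal F)/m(A)$ from the flag expansion of $\Xi_{W,A}$ combines with the factor $m(A)/m(X\setminus\{j\})$ in the statement and with the factor $m(C\setminus \mathcal F)/m(X)$ produced by \Cref{lemma:beta_b}, and these simplify via property (P) of arithmetic matroids to the uniform factor $m(\mathcal F)/m(X)$ required for the term-by-term cancellation. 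Once these sign and multiplicity identifications are checked, the vanishing of the left-hand side reduces to the family of vanishing relations obtained from \Cref{lemma:cones}.
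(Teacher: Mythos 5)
Your first step --- expanding each $\Xi_{W,A}$ over adapted flags, expanding $\beta_B$ into $\beta^{\pm}$, and observing that the positroid condition makes the sum over admissible $A$ telescope onto the monochromatic sign pattern $\beta_i^{\delta(i,j)}$ of \Cref{lemma:cones} --- is exactly the paper's strategy, and your accounting of the multiplicities via property (P) is also on target. (A small organizational remark: there is no need to invoke relation~(1) of \Cref{lemma:prop_base} to ``expand $\sigma_{F_0}$ into longer flags''; one simply groups the left-hand side by the flag $\mathcal F$, applies \Cref{lemma:cones} with $A$ equal to the set underlying $\mathcal F$ and $F$ its terminal flat, and multiplies the resulting zero by the flag prefix $\prod_{i<k}\sigma_{F_i}$ and the spare $\beta^-$ factors.)

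The genuine gap is your final claim that the terms with $\lvert C\setminus A\rvert=1$ are ``absorbed into the longer-flag expansions.'' They cannot be. The applications of \Cref{lemma:cones} account precisely for the summands whose flag $\mathcal F$ misses at least two elements of $C$; every relation produced by that lemma carries at least one factor $\beta_i^{\pm}$ with $i\in C$. But the left-hand side also contains the summands coming from $A=X\setminus\{j\}$ (so $B=\emptyset$) and flags $\mathcal F$ whose underlying set contains all of $C\setminus\{j\}$: these contribute monomials $\prod_i\sigma_{F_i}\prod_{a\in X\setminus(\mathcal F\cup\{j\})}\beta^-_a$ with no $\beta$-factor indexed by $C$, so they lie outside the span of the \Cref{lemma:cones} relations and must be shown to cancel by a separate mechanism. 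The paper does this by pairing each such flag $\mathcal F$ with the flag $\tilde{\mathcal F}$ obtained by replacing $k(\mathcal F)$ (the last element of $\mathcal F$ lying in $C$) with the missing element $j(\mathcal F)$: since $C\setminus\{j\}$ and $C\setminus\{k\}$ have the same closure, $\mathcal F$ and $\tilde{\mathcal F}$ determine the same chain of layers, and the identity $m(\mathcal F)/m(X\setminus\{j\})=m(\mathcal F\cup C)/m(X)$ (property (P) of arithmetic matroids) shows the two coefficients are equal up to the sign $(-1)^{k(\mathcal F)-j(\mathcal F)-1}$, giving the cancellation. Without this second argument the proof is incomplete; as written, your plan proves only the vanishing of the part of the sum indexed by flags missing at least two elements of the circuit.
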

\begin{proof}
We may assume that $X=\{0,1,\dots, \rk(X)\}$ and $C=\{0,1,\dots, \rk(C)\}$.
Let $R=X\setminus C$, we can rewrite the left hand side as follow:
\begin{align*}
\sum_{\substack{R \subseteq A \subsetneq X \\ C/A \textnormal{ pos.}}} &(-1)^{j+l(A,B)} \frac{m(A)}{m(X\setminus \{j\})} \Xi_{W,A} \beta_B =\\ 
&=\sum_{\mathcal{F} \subsetneq X} \sum_{\substack{\mathcal{F} \cup R\subseteq A \subsetneq X \\ C/A \textnormal{ pos.}}}
(-1)^{j
%+l(\mathcal{F})
} \frac{m(\mathcal{F})}{m(X\setminus \{j\})} \prod_{i=1}^{\lvert \mathcal{F} \rvert} \sigma_{F_i} \prod_{a \in A\setminus \mathcal{F}} \beta_a^{-} \prod_{b \in B} \beta_b \\
&= \sum_{\mathcal{F} \subsetneq X} \sum_{\substack{D \subsetneq C\setminus \mathcal{F} \\ C/(D\cup \mathcal{F}) \textnormal{ pos.}} } 
(-1)^{j
%+l(\mathcal{F})
} \frac{m(\mathcal{F})}{m(X\setminus \{j\})} \prod_{i=1}^{\lvert \mathcal{F} \rvert} \sigma_{F_i} \prod_{a \in D \cup (R\setminus \mathcal{F})} \beta_a^{-} \prod_{b \in B} \beta_b
\end{align*} 
Let $C'=C \setminus \mathcal{F}$, $j= \max(C\setminus A)$ and $C(j)=\{i \in C_{<j} \setminus \mathcal{F} \mid c_i=c_j\}$, we need the following equality:
\begin{align*}
\sum_{\substack{B \subseteq C_{<j} \setminus \mathcal{F} \\ B\cup \{j\} \textnormal{ pos.}}} &\prod_{a \in C'\setminus (B \cup \{j\}) } \beta_a^{-} \prod_{b \in B} \beta_b = \\
&= \sum_{B \subseteq C(j)} \sum_{D \subseteq B} (-1)^{ \lvert B \setminus D \rvert }
 \prod_{a \in C' \setminus (D \cup \{j\})} \beta_a^{-} \prod_{b \in D} \beta_b^+\\
 &=  \sum_{D \subseteq C(j)} 
 \prod_{a \in C' \setminus (D \cup \{j\})} \beta_a^{-} \prod_{b \in D} \beta_b^+ \sum_{E \subseteq C(j) \setminus D} (-1)^{ \lvert E \rvert } \\
 &= \prod_{a \in C' \setminus (C(j)\cup \{j\})} \beta_a^{-} \prod_{b \in C(j)} \beta_b^+ \\
 &= \prod_{\substack{a =j+1,\dots, \rk(C) \\ a \not \in \mathcal{F}}} \beta_a^{-} \prod_{\substack{b = 0, \dots, j \\ b\not \in \mathcal{F}}} \beta_b^{c_b c_j}.
\end{align*}
We also need, for $\lvert C'\rvert >1$ the following:
\begin{align*}
\sigma_{F_k} \sum_{j\in C'} &  \frac{ (-1)^{\lvert C'_{<j} \rvert} }{m(X\setminus \{j\})}
\sum_{\substack{B \subseteq C'_{<j} \\ B\cup \{j\} \textnormal{ pos.}}} \prod_{a \in C'\setminus (B \cup \{j\}) } \beta_a^{-} \prod_{b \in B} \beta_b =\\
&= \sigma_{F_k} \sum_{j\in C'}  \frac{(-1)^{\lvert C'_{<j} \rvert}}{m(X\setminus \{j\})}
\prod_{a =j+1}^{\rk(C)} \beta_a^{-} \prod_{b = 0}^{j} \beta_b^{c_b c_j} \\
&= 0,
\end{align*}
by \Cref{lemma:cones}.
Finally, we have:
\begin{align*}
\sum_{\substack{R \subseteq A \subsetneq X \\ C/A \textnormal{ pos.}}} &(-1)^{j} \frac{m(A)}{m(X\setminus \{j\})} \Xi_{W,A} \beta_B = \\
&= \sum_{j\in C} \sum_{C \setminus \{j \} \subseteq \mathcal{F} \subsetneq X} (-1)^{j} \frac{m(\mathcal{F})}{m(X\setminus \{j\})} \prod_{i=1}^{\lvert \mathcal{F} \rvert} \sigma_{F_k} \prod_{a \in X \setminus (\mathcal{F} \cup \{j\})} \beta_a^- \\
&= \sum_{j\in C} \sum_{C \setminus \{j \} \subseteq \mathcal{F} \subsetneq X} (-1)^{j} \frac{m(\mathcal{F} \cup C)}{m(X)} \prod_{i=1}^{\lvert \mathcal{F} \rvert} \sigma_{F_k} \prod_{a \in X \setminus (\mathcal{F} \cup \{j\})} \beta_a^- \\
&=\sum_{j(\mathcal{F}) < k(\mathcal{F})} ((-1)^{j(\mathcal{F})}+(-1)^{j(\mathcal{F})-1}) \frac{m(\mathcal{F} \cup C)}{m(X)} \prod_{i=1}^{\lvert \mathcal{F} \rvert} \sigma_{F_k} \prod_{a \in X \setminus (\mathcal{F} \cup \{j\})} \beta_a^- \\
&=0,
\end{align*}
where $k(\mathcal{F})$ is the last element of $\mathcal{F}$ that belongs to $C$, $j(\mathcal{F})$ the unique element in $C\setminus \mathcal{F}$.
Let $\tilde{\mathcal{F}}$ be the flag obtained from $\mathcal{F}$ substituting $k(\mathcal{F})$ with $j(\mathcal{F})$.
Notice that $\mathcal{F}\cup C = \tilde{\mathcal{F}} \cup C$ and that the addendum associated to $\mathcal{F}$ and to $\tilde{\mathcal{F}}$ differ by the sign $(-1)^{k(\mathcal{F})-j(\mathcal{F})-1}$.
%We have $m(\mathcal{F})m(X)=m(\mathcal{F}\cup j)m(X\setminus \{j\})$, so all terms in the last sum cancels each other.
This prove the claimed equality.
\end{proof}

Let $\omega$ be the generator of $H^1(\C^*;\Z)$.

\begin{theorem} \label{thm:main_div}
	Let $\A$ be a toric arrangement.
	The rational cohomology algebra $H^*(M(\A);\mathbb Q)$ is isomorphic to the algebra 
	\[\faktor{H^\bigcdot(T;\Q)[e_{W,A}]}{I} \]
     where $A$ ranges over all the idependent subsets of $\{1,\dots,n\}$ and $W$ ranges over all connected components of $\cap_{a \in A} S_a$. The degree of the generator $e_{W,A}$ is $\vert A \vert$.
    The ideal $I$ is generated by the following elements:
	\begin{itemize}
		\item for any two generators $e_{W,A} $, $e_{W',A'}$,  
			\[e_{W,A} e_{W',A'}\]
		if $ A \cap  A' \neq \emptyset$ or $ A \sqcup A'  $ is a dependent set, and otherwise 
		\begin{equation}\label{eq:relazione_prodotto}
			e_{W,A}e_{W',A'}- (-1)^{l(A, A')} \sum_{L\in \pi_0(W\cap W')} e_{L,A\cup A'}.
		\end{equation}
		\item For any $\psi \in H^\bigcdot(T)$ such that $\psi_{|W}=0$,
		\begin{equation} \label{eq:restriction}
		    e_{W,A}\psi
		\end{equation}
        \item  For every $X\subseteq \{1,\dots,n\}$ such that $\rk(X)=\vert X \vert -1$ write $X=C\sqcup F$ with $C$ the unique circuit in $X$. Consider the  associated linear dependency $\sum_{i\in C} n_i\chi_i=0$ with $n_i\in \mathbb{Z}$, and for every connected component $L$ of $\cap_{i \in X} H_i$ a relation
\begin{equation}\label{eq:relazione_final}
 \sum_{\substack{X\setminus C \subseteq A \subsetneq X\\ C/A \textnormal{ positroids}}} (-1)^{\lvert X_{<j} \rvert+l(A,B)} \frac{m(A)}{m(X\setminus \{j\})} e_{W,A} \psi_B
\end{equation}
			where $j=\min(C\setminus A)$, $B=C \setminus (A \cup \{j\})$ and $\psi_B= \prod_{b \in B} \chi_b^*(\omega)$ an element in $H^\bigcdot(T)$.
			%, for all $i\in C$, $c_i:=\sgn n_i$, $c_B = \prod_{i \in B}c_i$.
		\end{itemize}
\end{theorem}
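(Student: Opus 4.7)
My plan is to construct an explicit algebra map $\phi \colon R \to H^\bigcdot(\DD, \dd) \cong H^\bigcdot(M_\A; \Q)$ (where the last isomorphism is \Cref{dga-iso}) and then show it is an isomorphism. On generators I set $\phi(e_{W,A}) = [\Xi_{W,A}]$ and, on the factor $H^\bigcdot(T;\Q)$, I send the class $\chi^*(\omega) \in H^1(T;\Q)$ of a character $\chi \in \Lambda$ to $[\beta_\chi] \in H^1(\DD, \dd)$. That the $\Xi_{W,A}$ are cocycles is precisely the content of \Cref{lemma:in_ker}, and the $\beta_\chi$ are automatically closed because they are polynomials in the degree-$(0,1)$ generators $b_j$, none of which is hit by $\dd$ in a way that disturbs their combination.

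Next I verify that $\phi$ kills every generator of the ideal $I$, so that it descends to a well-defined algebra morphism out of $R$. The product relation \eqref{eq:relazione_prodotto} is exactly \Cref{lemma:multiplication}; the restriction relation \eqref{eq:restriction} translates, upon writing $\psi$ as a polynomial in characters lying in $\Lambda_W$ (since $\psi_{|W}=0$), into \Cref{lemma:facile}; and the circuit relation \eqref{eq:relazione_final} is precisely \Cref{lemma:circuit}. So $\phi$ descends to an algebra homomorphism $\phi \colon R \to H^\bigcdot(\DD, \dd)$, which by construction intertwines the cup product on $H^\bigcdot(M_\A;\Q)$.

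For bijectivity I intend to run a no-broken-circuit (nbc) basis argument. Fix a total order on $\{1, \ldots, n\}$. The circuit relations \eqref{eq:relazione_final} allow one to rewrite every $e_{W,A}$ whose underlying independent set contains a broken circuit as a $\Q$-linear combination of $e_{W',A'}$'s of strictly smaller broken-circuit complexity, multiplied by classes pulled back from $T$; iterating produces a spanning set of $R$ indexed by pairs $(W,A)$ with $A$ nbc, tensored with a chosen basis of the quotient of $H^\bigcdot(T;\Q)$ by relation \eqref{eq:restriction}. On the cohomology side, I will expand each representative $\Xi_{W,A}$ in the monomial basis of $\DD^{0,\bigcdot}$ provided by \Cref{lemma:monomial_base}: the structure of $\Xi_{W,A}$ as a signed sum over flags adapted to $(A,W)$ yields, for $A$ nbc and the flag given by the fixed total order, a distinguished leading monomial that does not appear in the expansion of any other $[\Xi_{W',A'}]$ with $(W',A')$ nbc. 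Surjectivity will then follow by matching the cardinality of the nbc basis of $R$ against the Betti numbers of $M_\A$ additively computed by \Cref{thm:main_additive}.

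The main obstacle is the combinatorial bookkeeping on both sides of the bijectivity step. The nbc reduction in $R$ via \eqref{eq:relazione_final} is delicate because the positroid-constrained summation mixes $e_{W,A}$'s with different underlying independent sets and with nontrivial toric factors $\psi_B$, so one must verify that every application strictly reduces a well-chosen broken-circuit complexity without spawning new broken circuits at the same level. On the injectivity side, isolating the surviving leading monomial in the expansion of $[\Xi_{W,A}]$ requires controlling the many flag summands that can contribute to the same monomial, as well as all $\dd$-exact terms arising from relation \eqref{item:last_def_M} of \Cref{def:D}; an explicit ``increasing flag'' argument, based on the chosen total order, should pin down a unique nonzero contribution per nbc pair and close the proof.
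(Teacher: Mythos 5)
Your proposal follows essentially the same route as the paper: the same map $e_{W,A}\mapsto \Xi_{W,A}$, $\chi^*(\omega)\mapsto\beta_\chi$, well-definedness via \Cref{lemma:multiplication,lemma:facile,lemma:circuit}, injectivity via the increasing-flag/leading-monomial argument in the basis of \Cref{lemma:monomial_base}, and surjectivity by the dimension count that the paper packages as \Cref{lemma:nbc_basis}. The concern you raise about $\dd$-exact terms is resolved exactly as you suspect: the image lies in $\DD^{0,\bigcdot}$ and $\dd$ has bidegree $(2,-1)$, so chain-level injectivity already gives injectivity in cohomology.
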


Before proving the above theorem we need a couple of lemmas.
We denote the ring $H^\bigcdot(T;\Q)[e_{W,A}]/I$ by $R$.

\begin{lemma}\label{lemma:nbc_basis}
%The natural map
%\[\bigoplus_{\substack{W \in \mathcal{L} \\ A \textnormal{ n.b.c.\ in } \mathcal{L}_{\leq W}}} e_{W,A} H^\bigcdot(W;\Q) \to \faktor{H^\bigcdot(T;\Q)[e_{W,A}]}{I}\]
%is an isomorphism of $H^\bigcdot(T)$-modules.
There exists a filtration $\mathrm{F}_\bigcdot$ of $H^\bigcdot(T;\Q)[e_{W,A}]/I$ such that 
\[\gr_{\mathrm{F}}^\bigcdot R \cong \bigoplus_{W \in \mathcal{L}} H^\bigcdot (W) \otimes \tH_{\cd W-2}(\Delta(T,W)). \]
In particular the set $e_{W,A}$ with $A$ a no broken circuit set in $\mathcal{L}_{\leq W}$ generates $R$ as $H(T)$-module.
Moreover, $R$ and $H^\bigcdot(M(\A);\Q)$ have the same dimension.
\end{lemma}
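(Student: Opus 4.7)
The plan is to construct a filtration $\mathrm{F}_\bigcdot$ on $R$ by setting $\mathrm{F}_p R$ equal to the $H^\bigcdot(T;\Q)$-submodule generated by the classes $e_{W,A}$ with $|A|\leq p$ (equivalently, with $\cd W \leq p$, since $|A|=\cd W$ whenever $A$ is an independent set cutting out $W$ as a connected component of $\cap_{a\in A} S_a$). First I would verify that this defines a filtration by ideals: the product relation~\eqref{eq:relazione_prodotto} sends $\mathrm{F}_p\cdot \mathrm{F}_q$ into $\mathrm{F}_{p+q}$ because each connected component $L$ of $W\cap W'$ has $\cd L=\cd W+\cd W'$ whenever $A\sqcup A'$ is independent; the restriction relation~\eqref{eq:restriction} is internal to each filtration level; and, although the circuit relation~\eqref{eq:relazione_final} mixes filtration levels, it expresses a top-filtration combination (the terms with $|A|=|X|-1$) as a combination of strictly lower-filtration terms $e_{W',A'}\psi_B$ with $|A'|<|X|-1$.

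The second step is to analyse $\gr^\bigcdot R$. Only the top-filtration part of each circuit relation survives in the graded, and the positroid condition becomes vacuous once $|A|=|X|-1$; the surviving identity reads
\[
\sum_{j\in C}(-1)^{|X_{<j}|}\,e_{W_j,X\setminus\{j\}} = 0 \quad \text{in } \gr^{|X|-1} R,
\]
which is precisely the oriented Orlik--Solomon broken-circuit relation for the matroid $\mathcal{L}_{\leq W}$. Together with~\eqref{eq:restriction}, which forces the $H^\bigcdot(T)$-action on the $W$-component of $\gr^{\cd W} R$ to factor through $H^\bigcdot(W)$, this produces a natural surjection
\[
\gr^p R \twoheadrightarrow \bigoplus_{\substack{W\in\mathcal{L}\\ \cd W=p}} H^\bigcdot(W)\otimes_\Q OS^{\mathrm{top}}(\mathcal{L}_{\leq W}),
\]
where $OS^{\mathrm{top}}$ denotes the top-degree part of the Orlik--Solomon algebra. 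By the Folkman--Björner theorem, $OS^{\mathrm{top}}(\mathcal{L}_{\leq W})\cong \tH_{\cd W-2}(\Delta(T,W))$ and admits an NBC basis; this yields both statement~(b) and an upper bound for~(c).

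The final step is to upgrade the surjection on the graded to an isomorphism and to deduce the dimensional claim. For this I would use the map $R\to H^\bigcdot(\DD,\dd)\cong H^\bigcdot(M_\A;\Q)$ sending $e_{W,A}\mapsto [\Xi_{W,A}]$ and $\chi^*(\omega)\mapsto [\beta_\chi]$, well-defined by the lemmas already proved in this section. It respects filtrations (the Leray filtration on the target, by~\Cref{thm:spectral}), and on the graded becomes the natural map whose image contains generators of each summand $H^\bigcdot(W)\otimes \tH_{\cd W-2}(\Delta(T,W))$; hence it is surjective. Combined with the upper bound $\dim_\Q R\leq \dim_\Q H^\bigcdot(M_\A;\Q)$ coming from~(b) and~\Cref{thm:main_additive}, this forces both $\dim_\Q R=\dim_\Q H^\bigcdot(M_\A;\Q)$ and the graded surjection above to be an isomorphism, concluding (a) and (c) simultaneously.

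The main obstacle is the verification that the top-filtration piece of~\eqref{eq:relazione_final} indeed reduces to the oriented Orlik--Solomon relation displayed above. Checking that the positroid-weighted lower-filtration terms genuinely belong to $\mathrm{F}_{<p}$, and that the coefficients $m(A)/m(X\setminus\{j\})$ for $|A|=|X|-1$ reduce to the signed unit coefficients of the OS relation, is a delicate bookkeeping of signs and positroid cases, essentially already encoded in~\Cref{lemma:circuit}.
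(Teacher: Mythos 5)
Your filtration, the reduction of the circuit relations \eqref{eq:relazione_final} modulo lower filtration to the signed Orlik--Solomon relations (with the correct observation that the positroid condition and the coefficients $m(A)/m(X\setminus\{j\})$ trivialize when $\lvert A\rvert=\lvert X\rvert-1$), and the identification of each local piece with $H^\bigcdot(W)\otimes\tH_{\cd W-2}(\Delta(T,W))$ via Brieskorn/Folkman all coincide with the paper's argument. One slip: your displayed surjection points the wrong way. The associated graded is a priori a \emph{quotient} of $\bigoplus_W H^\bigcdot(W)\otimes \tH_{\cd W-2}(\Delta(T,W))$ (the free module modulo the leading forms of the listed relations surjects onto $\gr R$, since further relations could in principle appear in the graded); it is this direction that yields NBC generation and the upper bound $\dim_\Q R\le\dim_\Q H^\bigcdot(M_\A;\Q)$ that you then use, so your conclusions are right but the arrow is not.

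The genuine gap is the final step. To promote the upper bound to an equality you invoke surjectivity of $g\colon R\to H^\bigcdot(\DD,\dd)$, justified only by the claim that on the graded its image "contains generators of each summand". Nothing proved so far supports this: it would require computing the leading term of each class $[\Xi_{W,A}]$ with respect to the Leray filtration and showing that these leading terms span each $E_2^{p,q}$, which is essentially the hard content of \Cref{thm:main_div}. Indeed the paper proves surjectivity of $g$ in \Cref{thm:main_div} precisely \emph{from} the dimension count of the present lemma (injectivity being handled separately via the monomial basis of $\DD^{0,\bigcdot}$ and the increasing-flag argument), so as organized your argument is circular. The paper's proof of this lemma does not use $g$ at all: it asserts that $\gr_{\mathrm F}R$ is presented exactly by the leading forms of the listed relations, identifies it with $\bigoplus_W H^\bigcdot(W)\otimes\tH_{\cd W-2}(\Delta(T,W))$, and obtains the equality of dimensions by comparing with \Cref{thm:main_additive}, whose right-hand side is computed independently by Alexander duality. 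If you want to avoid asserting that no extra relations appear in the graded, the missing lower bound should come from an injectivity statement for $g$ (as in \Cref{thm:main_div}), not from an unproven surjectivity.
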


\begin{proof}
Let $\mathrm{F}_\bigcdot$ be the filtration defined by \[\mathrm{F}_h R= \sum_{\substack{\cd(W)\leq h \\ A}} e_{W,A}H^\bigcdot (T).\]
The graded ring $\gr_{\mathrm{F}} R$ is isomorphic to $H^\bigcdot(T;\Q)[e_{W,A}]/I'$, where $I'$ is the ideal generated by eq.\ \eqref{eq:relazione_prodotto}, \eqref{eq:restriction} and
\begin{equation}
\sum_{j \in C} (-1)^{\lvert X_{<j}\rvert} e_{L,X \setminus \{j\}}
\tag{\ref{eq:relazione_final}'} \label{eq:OS}
\end{equation}
for all $X$ such that $\rk(X)=\lvert X \rvert -1$ and all $L$ connected components of $\cap_{a \in X} S_a$.
Notice that $\gr_{\mathrm{F}} R$ is $\mathcal{L}$-graded and isomorphic to 
\[
\bigoplus_{W \in \mathcal{L}} \faktor{H(W)[e_{W,A}]_A}{I_W}
 \]
as $H(T)$-module, where $I_W$ is the ideal generated by the eq.\ \eqref{eq:OS} for all $X$ such that $\rk(X)=\lvert X \rvert -1$ and $W$ is a connected components of $\cap_{a \in X} S_a$.
Finally, we have
\begin{align*}
\faktor{H^\bigcdot(T;\Q)[e_{W,A}]}{I} & \cong \gr_{\mathrm{F}} R \\
&\cong \bigoplus_{W \in \mathcal{L}} \faktor{H(W)[e_{W,A}]_A}{I_W} \\
&\cong \bigoplus_{W \in \mathcal{L}} H^\bigcdot (W) \otimes \tH_{\rk W-2}(\Delta(T,W)),
\end{align*}
where we use the Brieskorn isomorphism for the Orlik-Solomon algebra associated to the geometric lattice $\mathcal{L}_{\leq W}$.

From \Cref{thm:main_additive} we deduce that $R\cong H^\bigcdot(M(\A);\Q)$ as $\Q$-vector space and so they have the same dimension.
\end{proof}

We want to construct a bijection for any geometric lattice $\mathcal{L}_{\leq W}$ between \emph{no broken circuit sets} and certain maximal flags.
For any maximal flag of layers $\mathcal{F}=(T=F_0 \lessdot F_1 \lessdot \dots \lessdot F_k=W)$ we define the \textit{edge labelling} $\epsilon(\mathcal{F})$ as the list $(b_1, \dots, b_k)$ where $b_k= \max \{i \in \{1,\dots,n\} \mid F_k \in F_{k-1} \vee S_{i}\}$.
We say that $\mathcal{F}$ is \textit{increasing} if $b_i<b_j$ for all $i<j$ (where $\epsilon(\mathcal{F})=(b_1, \dots, b_k)$).

Notice that if $\mathcal{F}$ is a maximal flag adapted to $A$ and $W$, $\epsilon(\mathcal{F})$ may not be a subset of $A$.

\begin{lemma} \label{lemma:increasing_flag}
We fix a layer $W$ of rank $k$ and consider the geometric lattice $\mathcal{L}_{\leq W}$.
If $A=\{a_1 < a_2 < \dots < a_k\}$ is a no broken circuit set, then a maximal flag $\mathcal{F}$ adapted to $A$ and $W$ is increasing in $\mathcal{L}_{\leq W}$ if and only if $\mathcal{F}=(a_1, a_2, \dots,a_k)$.
\end{lemma}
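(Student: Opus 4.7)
My plan is to prove the Lemma by a direct analysis of the edge labels, in the style of Björner's EL-shellings of geometric lattices. The central step will be a sandwich bound
\[ a_{\sigma(i)} \le b_i \le \max_{m \le i} a_{\sigma(m)} \]
valid for every flag $\mathcal{F} = (a_{\sigma(1)},\dots,a_{\sigma(k)})$ adapted to $A$ and $W$. The lower bound is automatic since $a_{\sigma(i)}$ is always a valid candidate for $b_i$; the upper bound is where the NBC hypothesis enters, as any candidate $j$ exceeding $\max_{m \le i} a_{\sigma(m)}$ would, by adding $j$ to the matroid closure of $\{a_{\sigma(1)},\dots,a_{\sigma(i)}\}$, produce a circuit $C \subseteq \{a_{\sigma(1)},\dots,a_{\sigma(i)},j\}$ with $j = \max C$, whose broken circuit $C\setminus\{j\} \subseteq A$ would contradict NBC.

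For the ``if'' direction I would specialize to $\sigma = \mathrm{id}$: the sandwich collapses to $b_i = a_i$, so $(b_1,\dots,b_k)=(a_1,\dots,a_k)$ is strictly increasing.

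For the converse I would use the fact that any non-identity permutation of $\{1,\dots,k\}$ has at least one descent, and pick $m$ the smallest index with $\sigma(m)>\sigma(m+1)$. On the increasing prefix $\sigma(1)<\dots<\sigma(m)$ the sandwich again yields $b_j=a_{\sigma(j)}$, in particular $b_m=a_{\sigma(m)}$. At step $m+1$, the upper bound gives $b_{m+1}\le\max_{r\le m+1}a_{\sigma(r)}=a_{\sigma(m)}$ (since $\sigma(m+1)<\sigma(m)$), while $a_{\sigma(m)}$ already lies in the closure of the first $m$ atoms used and hence cannot equal $b_{m+1}$. This forces $b_{m+1}<a_{\sigma(m)}=b_m$, violating the increasing hypothesis.

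The hardest step will be the NBC-based upper bound: one has to identify the correct circuit $C$, verify that its maximum is indeed $j$, and check that the remaining elements genuinely lie in $A$. This broken-circuit mechanism is used both in the initial sandwich lemma and in excluding larger-index candidates for $b_{m+1}$ at the descent position, and tracking how the closures of successive initial segments of the flag evolve is the technical heart of the argument.
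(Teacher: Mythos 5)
Your proof is correct and rests on the same combinatorial engine as the paper's: any admissible label at step $i$ corresponds to an atom lying below $F_i$, i.e.\ to an index in the closure of $\{a_{\sigma(1)},\dots,a_{\sigma(i)}\}$, and the no-broken-circuit hypothesis forbids such an index from exceeding $\max_{m\le i} a_{\sigma(m)}$ --- this is exactly the paper's ``key observation'', which the paper applies only at the top of the flag and propagates by induction on $k$, whereas you apply it at every level to obtain the sandwich $a_{\sigma(i)}\le b_i\le \max_{m\le i}a_{\sigma(m)}$ and then conclude with a first-descent argument. Both routes are sound; yours avoids the double induction and makes explicit the step excluding $b_{m+1}=a_{\sigma(m)}$ (namely that $a_{\sigma(m)}$ already lies in the closure of the first $m$ atoms, so it is not an admissible label at step $m+1$), which the paper handles implicitly through the inductive hypothesis on $\epsilon(\mathcal{F}')$.
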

\begin{proof}
The key observation is the following: if $b> a_k$ then $A \cup \{b\}$ is an independent set (since $A$ is a no broken circuit set).
We prove that every maximal, increasing flag adapted to $A$ and $W$ is $\mathcal{F}=(a_1, a_2, \dots,a_k)=\epsilon(\mathcal{F})$ by induction on $k$; the base case is trivial.
Let $\mathcal{F}=(a_{\sigma(1)}, a_{\sigma(2)}, \dots, a_{\sigma(k)})$ be a maximal increasing flag adapted to $A$ and $W$, by inductive step we assume that the flag $\mathcal{F}'= (a_{\sigma(1)}, a_{\sigma(2)}, \dots, a_{\sigma(k-1)})=(a_1, \dots, \widehat{a_{\sigma(k)}}, \dots, a_k)$ has labelling $\epsilon(\mathcal{F}')=(a_1, \dots, \widehat{a_{\sigma(k)}}, \dots, a_k)$.
The labelling $\epsilon(\mathcal{F})=(a_1, \dots, \widehat{a_{\sigma(k)}}, \dots, a_k, b)$ for some $b \in \{1,\dots,n\}$ is increasing but from the key observation we have $b \leq a_k$.
By definition of the labelling $b\geq a_k$, so $b=a_k$ and $\sigma(k)=k$.

Again by induction, we prove that the flag $\mathcal{F}=(a_1, a_2, \dots,a_k)$ has labelling $\epsilon(\mathcal{F})=(a_1, a_2, \dots,a_k)$ and so is increasing.
By inductive step $\epsilon((a_1, a_2, \dots,a_{k_1}))=(a_1, a_2, \dots,a_{k_1})$, so $\mathcal{F}$ has labelling $\epsilon(\mathcal{F})=(a_1, \dots, a_{k-1}, b)$ with $b\geq a_k$ by definition and with $b\leq a_k$ by the key observation.
We have proven that the flag $(a_1, a_2, \dots,a_k)$ is increasing.
\end{proof}

\begin{proof}[Proof of \Cref{thm:main_div}]
Let $g \colon H(T)[e_{W,A}] \to \DD$ be the map defined by $g(\chi^*(\omega))= \beta_\chi$ for all $\chi \in \Lambda$ and by $g(e_{W,A}) = \Xi_{W,A}$.
It is well defined since $\beta_{a\chi +b\eta}= a\beta_{\chi}+b \beta_\eta$ for all $a,b \in \Z$.
The ideal $I$ is contained in $\ker g$ by \Cref{lemma:multiplication,lemma:facile,lemma:circuit}, so $g\colon H(T)[e_{W,A}]/I \to \DD$ is well defined. 

We will show the injectivity of $g$ considering it as morphism of $H^\bigcdot(T)$-module.
Consider the monomial base of $\DD^{0,\bigcdot}$ provided in the second part of \Cref{lemma:monomial_base}.
Notice that in the expansion of $g(e_{W,A})=\Xi_{W,A}$, for $A$ no broken circuit set in $\mathcal{L}_{\leq W}$, appears only one monomial $\prod_{L \in \mathcal{F}} \sigma_L$ with $\mathcal{F}$ increasing chain (in $\mathcal{L}_{\leq W}$) %with $\lvert \mathcal{F} \rvert = \lvert A \rvert$ 
by \Cref{lemma:increasing_flag}.
For each $W\in \mathcal{L}$ and $A$ no broken circuit set in $\mathcal{L}_{\leq W}$, we choose a set $B(A)$ such that $A \sqcup B(A)$ is a basis and a cone $C(A) \in \Delta$ contained in $\Ann \Lambda_A$ of maximal dimension.
Let us suppose that 
\[g \left( \sum_{W \in \mathcal{L}} \sum_{A \textnormal{ n.b.c.\ in } \mathcal{L}_{\leq W}} \alpha_{W,A} e_{W,A} \psi_{W,A} \right) =0\]
for some $\psi_{W,A} \in H^\bigcdot(W)$ and some $\alpha_{W,A} \in \Q$  with at least one $\alpha_{W,A}$ different from zero.
Let $(\overline{W}, \overline{A})$ such that $\lvert \overline{A} \rvert$ is maximal among all $(W,A)$ with $\alpha_{W,A} \neq 0$.
Let $\psi \in H(T)$ such that $\psi_{\overline{W},\overline{A}} \psi|_{\overline{W}}= \psi_{B(\overline{A})}$.
Let $\overline{\mathcal{F}}$ be the list of all elements in $\overline{A}$ ordered increasingly.
By \Cref{lemma:increasing_flag}, in
\begin{align*}
g \Big( \sum_{W \in \mathcal{L}} &\sum_{A \textnormal{ n.b.c.\ in } \mathcal{L}_{\leq W}} \alpha_{W,A} e_{W,A} \psi_{W,A} \psi \Big) = \sum_{W \in \mathcal{L}} \sum_{A \textnormal{ n.b.c.}} \alpha_{W,A} \Xi_{W,A} g(\psi_{W,A} \psi) \\
&= \sum_{W \in \mathcal{L}} \sum_{A \textnormal{ n.b.c.}} \sum_{\mathcal{F} \textnormal{ adap.\ } A,W} \alpha_{W,A} \frac{m(\mathcal{F})}{m(A)} \Big(\prod_{a \in A} x(\mathcal{F},a) \Big) g(\psi_{W,A} \psi) 
\end{align*}
the monomial $z=\prod_{L\in \overline{\mathcal{F}}}\sigma_L \prod_{j \in C(\overline{A})} b_j$ associated to the increasing flag $\overline{\mathcal{F}}$, can appear only in the addendum 
$\alpha_{\overline{W}, \overline{A}} \Xi_{\overline{W}, \overline{A}} g(\psi_{\overline{W}, \overline{A}} \psi).$
In particular $z$ appears only in the expansion of 
\begin{align*}
(\prod_{L \in \overline{F}} \sigma_L) g(\psi_{\overline{W}, \overline{A}} \psi) &= (\prod_{L \in \overline{F}} \sigma_L) g(\psi_{B(\overline{A})}) \\
&= \prod_{L \in \overline{F}} \sigma_L \prod_{b \in B(\overline{A})} \beta_b \\
&= m(B(\overline{A})) \prod_{L \in \overline{F}} \sigma_L  \sum_{ K \in \mathcal{C}_\Delta^{\lvert A\rvert}} \Vol (\pi(K)) \prod_{c \in K} b_c.
\end{align*}
The coefficient of $z$ in $(\prod_{L \in \overline{F}} \sigma_L) g(\psi_{\overline{W}, \overline{A}} \psi)$  must be zero, but it 
is (up to a sign) equal to $\alpha_{\overline{W}, \overline{A}} m(B(\overline{A})) \Vol (\pi(C(\overline{A})))$ c.f.\ \Cref{cor:beta_cones}.
The volume $\Vol (\pi(C(\overline{A})))$ is different from zero because $\Lambda_{\overline{A}} \otimes \Q \oplus \Lambda_{B(\overline{A})}\otimes \Q = \Lambda \otimes \Q$.
We have $\alpha_{\overline{W}, \overline{A}}=0$ contradicting the assumption, hence $g$ is injective.

Notice that the range of $g$ is contained in $\ker \dd$ by \Cref{lemma:in_ker} and in the subalgebra $\DD^{0,\bigcdot}$.
The map $g$ induces an injective map
\[g \colon \faktor{H^\bigcdot(T)[e_{W,A}]}{I} \to H^\bigcdot(\DD, \dd) \cong H^\bigcdot(M(\A);\Q)\]
since $\dd$ is of bidegree $(2,-1)$.
It is also surjective because $H^\bigcdot(T)[e_{W,A}]/I$ and $H^\bigcdot(M(\A);\Q)$ have the same dimension (see \Cref{lemma:nbc_basis}).
We have proven the Theorem.
\end{proof}

\begin{remark}
\Cref{thm:main_div} is a generalization of \cite[Theorem 5.2]{DCP05} and analogous to \cite[Theorem 6.13]{CDDMP19}.
Indeed, if $\A$ is totally unimodular and the circuit $C=\{0,1,\dots, n\}$ is oriented with $c_0=-$, $c_i=+$ for $i>0$, we obtain the Equation (20) of \cite{DCP05}.

We have chosen the generator associated with an hypertorus $S_a$ as $\Xi_{S_a,\{a\}} = \sigma_{S_a}+\beta_{\chi_a}^-$ that depends on the choice of one between $\chi_a$ and $-\chi_a$.
Another possible choice of generators were $\Xi_{S_a,\{a\}} = 2\sigma_{S_a}+\beta_{\chi_a}^- + \beta_{\chi_a}^+$, this would be lead to the same presentation of \cite[Theorem 6.13]{CDDMP19}.
\end{remark}

\begin{remark}
\Cref{thm:main_div} gives another proof of the rational formality of toric arrangements, previously proven in \cite{Dupont,CDDMP19}.
\end{remark}

\begin{conjecture}\label{conj-int}
Substituting in eq.~\eqref{eq:relazione_final} $\frac{m(A)}{m(X\setminus \{j\})} \psi_B$ with $\prod_{i=1}^{\lvert B\rvert} \psi_{\chi_i}$, where $(\overline{\chi_i})_i$ form a basis of $\Lambda_{C}/\Lambda_{A}$ with the same orientation of $(\overline{\chi}_b)_{b\in B}$, the cohomology ring with integer coefficients %$H^\bigcdot(M(\A);\mathbb{Z})$ 
have a presentation analogous to the one in \Cref{thm:main_div}.
\end{conjecture}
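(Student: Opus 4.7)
The plan is to follow the strategy used to prove \Cref{thm:main_div} but to replace the rational Morgan model with the integral Leray spectral sequence of \Cref{thm:spectral}. Write $R_\Z$ for the ring presented by the relations in the conjecture. The first step is to construct a well-defined map $\phi \colon R_\Z \to H^\bigcdot(M_\A;\Z)$. For each no-broken-circuit independent set $A$ in $\LLL_{\leq W}$, the class $\phi(e_{W,A})$ is defined as a chosen lift, through the Leray filtration, of the corresponding broken-circuit basis element of
\[E_2^{\lvert A\rvert,0} \cong \bigoplus_{W \in \LLL} H^0(W;\Z) \otimes \tH_{2\cd W - 2}(\Delta(T,W);\Z).\]
Such integer lifts exist because the integer Orlik--Solomon presentation of $\tH_\bigcdot(\Delta(T,W);\Z)$ due to Deligne--Goresky--MacPherson \cite{DGMP00} and de Longueville--Schultz \cite{dLS01} provides canonical integer classes at the $E_2$-level.

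The second step is to verify that the relations in $R_\Z$ hold for the classes $\phi(e_{W,A})$. The product relation \eqref{eq:relazione_prodotto} follows from the explicit description of the multiplication on $E_2$ preceding \Cref{thm:spectral}: the maps $\eta^L_{W,W'}$ are already defined over $\Z$ via Yuzvinsky's $\bullet$-product. The restriction relation \eqref{eq:restriction} reflects the sheaf-theoretic identification $R^q j_* \Z_{M_\A} \cong \bigoplus_W (i_W)_* \Z_W \otimes \tH_{2\cd W - q - 2}(\Delta(T,W))$ of \Cref{lemma:higher_direct_image}, through which a class $\psi \in H^\bigcdot(T)$ acts by its restriction $\psi|_W$. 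The heart of the proof is the modified circuit relation: this requires an integral refinement of \Cref{lemma:circuit} in which the rational coefficient $\frac{m(A)}{m(X\setminus\{j\})}$ is cancelled by the choice of $(\chi_i)_i$ as an integer basis of the quotient lattice $\Lambda_C/\Lambda_A$, with the orientation condition fixing the sign.

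Once $\phi$ is well defined, its bijectivity can be proved as in \Cref{lemma:nbc_basis}: both $R_\Z$ and $H^\bigcdot(M_\A;\Z)$ carry a codimension filtration (the Leray filtration on the target), and on the associated graded the map becomes the integral decomposition of \Cref{thm:main_additive}. The main obstacle, as anticipated, is the integer circuit relation. The rational proof (\Cref{lemma:beta_b,lemma:cones,lemma:circuit}) computes $\prod_{a\in A}\beta_{\chi_a}$ as a rational volume sum, and the ratio $m(A)/m(X\setminus\{j\})$ arises to normalize bases that differ by a lattice index. Over $\Z$ one must produce the same identity without division; I expect this to require either a direct geometric construction of $\phi(e_{W,A})$ via integral diagrams of spaces in the spirit of \cite{DGMP00,dLS01}, or a careful integer reformulation of the arithmetic matroid property (P) used in the proof of \Cref{lemma:cones}. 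A secondary obstacle is to rule out extension problems in the integral spectral sequence, which a priori could introduce torsion corrections between filtration steps not captured by the conjectured presentation.
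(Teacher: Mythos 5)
This statement is stated in the paper as \Cref{conj-int}, i.e.\ as an open \emph{conjecture}: the paper offers no proof of it, and indeed the paper's own method for \Cref{thm:main_div} (the Morgan model of \Cref{dga-iso}) is intrinsically rational, which is precisely why the integral statement is left open. Your text is therefore not being compared against an existing argument, and on its own terms it is a strategy outline rather than a proof: the two points on which the whole conjecture hinges are named but not resolved. First, the integral circuit relation. The rational proof of \Cref{lemma:circuit} rests on \Cref{lemma:beta_b} and \Cref{lemma:cones}, where the coefficient $\frac{m(A)}{m(X\setminus\{j\})}$ is produced by a determinant/volume computation and by the property (P) of arithmetic matroids; you correctly observe that over $\Z$ one must reproduce this identity without division, but you only say you ``expect'' one of two routes to work. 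This is the content of the conjecture, not a step that can be deferred.

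Second, the spectral-sequence route has a structural problem that you underestimate by calling it ``secondary.'' The class $e_{W,A}$ lives in $E_2^{0,\lvert A\rvert}$ (not $E_2^{\lvert A\rvert,0}$ as you write: for $W$ of codimension $\lvert A\rvert$ the factor $\tH_{2\cd W-q-2}(\Delta(T,W))$ is concentrated in $q=\cd W$), i.e.\ at the top of the Leray filtration. A lift of such a class to $H^{\lvert A\rvert}(M_\A;\Z)$ is only well defined modulo lower filtration, and products of lifts are only controlled by $E_2$ modulo lower filtration. Hence the relations \eqref{eq:relazione_prodotto}, \eqref{eq:restriction} and \eqref{eq:relazione_final} can be verified on $\gr^{\Fil}H^\bigcdot(M_\A;\Z)$ via \Cref{thm:spectral}, but not, by this argument alone, in $H^\bigcdot(M_\A;\Z)$ itself; the multiplicative extension problem is exactly the obstruction to promoting the graded presentation to a presentation of the ring, and nothing in your proposal addresses it. To make this approach work you would need either canonical integral lifts with controlled products (e.g.\ via the diagrams of spaces of \cite{DGMP00,dLS01}, as you suggest, but carried out, not invoked), or an integral analogue of the elements $\Xi_{W,A}$ living in an actual integral model of $M_\A$. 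As it stands, the proposal identifies the right difficulties but does not close them, so the conjecture remains open.
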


This approach to the computation of cohomology ring for toric arrangements may be generalised to the non divisorial case.

%\bigskip
%\nocite{*}
\bibliography{Subspacesbib}{}
\bibliographystyle{amsalpha}
\bigskip \bigskip

\end{document}